\theoremstyle{definition}
\newtheorem{definition}{Definition}[section]
\newtheorem{example}[definition]{Example}
\newtheorem{remark}[definition]{Remark}
\theoremstyle{plain}
\newtheorem{theorem}[definition]{Theorem}
\newtheorem{lemma}[definition]{Lemma}
\newtheorem{proposition}[definition]{Proposition}
\newtheorem{corollary}[definition]{Corollary}
\newcommand{\dotvee}{\stackrel{\bullet }{\vee }}
\numberwithin{equation}{section}
\def\N{{\mathbb N}}
\begin{document}
\title{\vspace*{-35pt}Stone Commutator Lattices and Baer Rings}
\author{Claudia MURE\c SAN\\ {\small University of Cagliari, University of Bucharest}\\ {\small c.muresan@yahoo.com, cmuresan@fmi.unibuc.ro}}
\date{\today }
\maketitle

\begin{abstract} In this paper, we transfer Davey`s characterization for $\kappa $--Stone bounded distributive lattices to lattices with certain kinds of quotients, in particular to commutator lattices with certain properties, and obtain related results on prime, radical, complemented and compact elements, annihilators and congruences of these lattices. We then apply these results to certain congruence lattices, in particular to those of semiprime members of semi--degenerate congruence--modular varieties, and use this particular case to transfer Davey`s Theorem to commutative unitary rings.

{\em Keywords:} (strongly) Stone lattice, commutator lattice, annihilator, modular commutator, Baer ring.

{\em MSC 2010:} primary: 06B10; secondary: 06D22, 08A30, 08B10.\end{abstract}

\section{Introduction}
\label{introduction}

We shall refer to \cite[Theorem $1$]{dav} as {\em Davey`s Theorem}. Given an arbitrary infinite cardinality $\kappa $, Davey`s Theorem provides a characterization for $\kappa $--Stone bounded distributive lattices: those bounded distributive lattices with the property that the annihilators of their subsets of cardinality at most $\kappa $ are principal ideals generated by elements from their Boolean center.

It turns out that commutator lattices with certain properties, in particular congruence lattices of semiprime algebras from semi--degenerate con\-gru\-ence--modular varieties, satisfy the equivalences from Davey`s Theorem; moreover, changing the cardinalities in those equivalent conditions to any nonzero value produces more properties equivalent to those conditions; furthermore, by restricting the values of these cardinalities, we obtain a generalization of this equivalence result to a generalization of commutator lattices.

To prove this, we first transfer Davey`s Theorem to bounded lattices from certain quotients of theirs which are distributive, or frames, or a generalization of frames, and then apply this transfer to commutator lattices satisfying certain conditions and a certain quotient of such commutator lattices.

Then we apply the previous result to the ideal lattices of commutative unitary rings, from which we transfer this result to the elements of these rings.

\section{Definitions and Notations}

We shall denote by $\N $ the set of the natural numbers and by $\N ^*=\N \setminus \{0\}$. For any set $S$, $|S|$ will denote the cardinality of $S$.

Throughout this paper, all algebras shall be nonempty and, unless there is danger of confusion, they will be designated by their underlying sets. By {\em trivial algebra} we mean one--element algebra. Recall that a variety ${\cal V}$ is said to be {\em semi--degenerate} iff no nontrivial algebra in ${\cal V}$ has trivial subalgebras. For any algebra $A$, $({\rm Con}(A),\vee ,\cap ,\Delta _A,\nabla _A)$ shall be the bounded lattice of the congruences of $A$, with the exception of the case when $A$ is a commutator lattice, case in which ${\rm Con}(A)$ will denote the lattice of congruences of the lattice reduct of $A$; for any $X\subseteq A^2$ and any $a,b\in A$, $Cg_A(X)$ shall be the congruence of $A$ generated by $X$ and we will denote $Cg_A(a,b)=Cg_A(\{(a,b)\})$; the set of the principal congruences of $A$ will be denoted by ${\rm PCon}(A)$. Recall that the compact congruences of $A$, that is the compact elements of the lattice ${\rm Con}(A)$, are exactly the finitely generated congruences of $A$. For any $\theta \in {\rm Con}(A)$, $p_{\theta }:A\rightarrow A/\theta $ will be the canonical surjection. For any algebra $L$ having a lattice reduct (in particular for any commutator lattice $L$), ${\rm Con}(L)$ will denote the set of the congruences of the lattice reduct of $L$.

Now let $L$ be an arbitrary lattice. We denote by ${\rm Cp}(L)$, ${\rm Mi}(L)$ and ${\rm Smi}(L)$ the sets of the compact, the meet--irreducible and the strictly meet--irreducible elements of $L$, respectively. Recall that $L$ is said to be {\em compact} iff ${\rm Cp}(L)=L$ and $L$ is said to be {\em algebraic} iff each of its elements is a join of compact elements. Note that, if $L$ is compact, then the join of any nonempty $U\subseteq L$ equals the join of a finite subset of $U$, and that, if $L$ has finite length, then $L$ is compact, thus $L$ is algebraic. Note that, if $L$ has a $1$, then $1\notin {\rm Smi}(L)$, because $\displaystyle 1=\bigwedge \emptyset =\bigwedge \{x\in L\ |\ 1<x\}$. For each $a\in {\rm Smi}(L)$, we shall denote by $\displaystyle a^{+}=\bigwedge \{x\in L\ |\ a<x\}$ the unique successor of $a$ in $L$. If $L$ has a $1$, then we shall denote by ${\rm Max}_L$ the set of the maximal elements of the ordered set $(L\setminus \{1\},\leq )$. For any algebra $A$, ${\rm Max}_{{\rm Con}(A)}$ will simply be denoted by ${\rm Max}(A)$. ${\rm Filt}(L)$ and ${\rm Id}(L)$ shall be the bounded lattices of the filters and ideals of $L$, respectively, and ${\rm PId}(L)$ will be the bounded sublattice of ${\rm Id}(L)$ of the principal ideals of $L$. Note that a filter of $L$ is principal iff it has a minimum and an ideal of $L$ is principal iff it has a maximum. Recall, also, that any class of a congruence of $L$ is a convex sublattice of $L$, so it has a unique writing as the intersection between a filter and an ideal of $L$.

Let $U\subseteq L$ and $a,b\in L$, arbitrary. We denote by $(U]_L$ and $[U)_L$ the ideal and the filter of $L$ generated by $U$, respectively, by $(a]_L=(\{a\}]_L$ and $[a)_L=[\{a\})_L$ and, to avoid overlapping with the classical notation for the commutator operation in commutator lattices (see Section \ref{theth}), $\lfloor a,b\rceil _L=[a)_L\cap (b]_L$ will be the notation for intervals; if $L$ is the chain of natural numbers with the natural order, then we denote $\overline{a,b}=\lfloor a,b\rceil _L$. Note that, for any lattice $M$ and any surjective lattice morphism $f:L\rightarrow M$, the map $I\mapsto f(I)$ is a complete lattice morphism from ${\rm Id}(L)$ to ${\rm Id}(M)$ that satisfies $f((U]_L)=(f(U)]_M$; in particular, for any $\theta \in {\rm Con}(L)$, we have $(U]_L/\theta =(U/\theta ]_{L/\theta }$, so $(a]_L/\theta =(a/\theta ]_{L/\theta }$.

${\rm Ann}_L(a)$ and ${\rm Ann}_L(U)$ shall be the {\em annihilator} of $a$ and $U$ in $L$, respectively: ${\rm Ann}_L(a)\linebreak =\{x\in L\ |\ x\wedge a=0\}$ and $\displaystyle {\rm Ann}_L(U)=\bigcap _{u\in U}{\rm Ann}_L(u)$. We will denote by: ${\cal A}nn(L)=\{{\rm Ann}_L(U)\ |\ U\subseteq L\}$, ${\rm PAnn}(L)=\{{\rm Ann}_L(a)\ |\ a\in L\}$, ${\rm P2Ann}(L)=\{{\rm Ann}_L({\rm Ann})_L(a))\ |\ a\linebreak \in L\}$ and ${\rm 2Ann}(L)=\{{\rm Ann}_L({\rm Ann}_L(U))\ |\ U\subseteq L\}$. The following notations will also be useful: let ${\cal A}nn_{<\infty }(L)=\{{\rm Ann}_L(U)\ |\ U\subseteq L,|U|<\aleph _0\}$ and ${\rm 2Ann}_{<\infty }(L)=\{{\rm Ann}_L({\rm Ann}_L(U))\ |\ U\subseteq L,|U|<\aleph _0\}$ and, if $\kappa $ is a cardinality, let ${\cal A}nn_{\kappa }(L)=\{{\rm Ann}_L(U)\ |\linebreak U\subseteq L,|U|\leq \kappa \}$ and ${\rm 2Ann}_{\kappa }(L)=\{{\rm Ann}_L({\rm Ann}_L(U))\ |\ U\subseteq L,|U|\leq \kappa \}$.

${\cal B}(L)$ will denote the set of the complemented elements of the bounded lattice $L$, regardless of whether $L$ is distributive. Unless mentioned otherwise, we shall denote by $\neg \, $ the complementation in every Boolean algebra.

Recall that the bounded lattice $L$ is said to be {\em Stone}, respectively {\em strongly Stone}, iff, for all $a\in L$, respectively all $U\subseteq L$, there exists an $e\in {\cal B}(L)$ such that ${\rm Ann}_L(a)=(e]_L$, respectively ${\rm Ann}_L(U)=(e]_L$. Also, for any cardinality $\kappa $, $L$ is said to be {\em $\kappa $--Stone} iff, for all $U\subseteq L$ with $|U|\leq \kappa $, there exists an $e\in {\cal B}(L)$ such that ${\rm Ann}_L(U)=(e]_L$.

Remember that $L$ is called a {\em frame} iff $L$ is complete and the meet in $L$ is completely distributive w.r.t. the join.

\section{The Theorem We Are Going to Transfer to Commutator Lattices, then to Commutative Unitary Rings}
\label{theth}

Throughout this section, $L$ will be a bounded lattice. We shall use the following notations for these conditions on $L$, where $\kappa $ is an arbitrary cardinality:

\begin{flushleft}\begin{tabular}{ll}
$(1)_{\kappa ,L}$ & $L$ is a $\kappa $--Stone lattice;\\ 
$(1)_{<\infty ,L}$ & ${\cal A}nn_{<\infty }(L)\subseteq \{(e]_L\ |\ e\in {\cal B}(L)\}$;\\
$(1)_L$ & $L$ is a strongly Stone lattice;\end{tabular}

\begin{tabular}{ll}
$(2)_{\kappa ,L}$ & $L$ is a Stone lattice and ${\cal B}(L)$ is a $\kappa $--complete Boolean sublattice of $L$;\\
$(2)_{<\infty ,L}$ & $L$ is a Stone lattice and ${\cal B}(L)$ is a Boolean sublattice of $L$;\\ 
$(2)_L$ & $L$ is a Stone lattice and ${\cal B}(L)$ is a complete Boolean sublattice of $L$;\end{tabular}

\begin{tabular}{ll}
$(3)_{\kappa ,L}$ & ${\rm P2Ann}(L)$ is a $\kappa $--complete Boolean sublattice of ${\rm Id}(L)$ such that\\ 
& $a\mapsto {\rm Ann}_L({\rm Ann}_L(a))$ is a lattice morphism from $L$ to ${\rm P2Ann}(L)$;\\ 
$(3)_{<\infty ,L}$ & ${\rm P2Ann}(L)$ is a Boolean sublattice of ${\rm Id}(L)$ such that\\ 
& $a\mapsto {\rm Ann}_L({\rm Ann}_L(a))$ is a lattice morphism from $L$ to ${\rm P2Ann}(L)$;\\ 
$(3)_L$ & ${\rm P2Ann}(L)$ is a complete Boolean sublattice of ${\rm Id}(L)$ such that\\ 
& $a\mapsto {\rm Ann}_L({\rm Ann}_L(a))$ is a lattice morphism from $L$ to ${\rm P2Ann}(L)$;\end{tabular}

\begin{tabular}{ll}
$(4)_{\kappa ,L}$ & for all $a,b\in L$, ${\rm Ann}_L(a\wedge b)=({\rm Ann}_L(a)\cup {\rm Ann}_L(b)]_L$, and\\ 
& ${\rm 2Ann}_{\kappa }(L)\subseteq {\rm PAnn}(L)$;\\ 
$(4)_{<\infty ,L}$ & for all $a,b\in L$, ${\rm Ann}_L(a\wedge b)=({\rm Ann}_L(a)\cup {\rm Ann}_L(b)]_L$, and\\ 
& ${\rm 2Ann}_{<\infty }(L)\subseteq {\rm PAnn}(L)$;\\ 
$(4)_L$ & for all $a,b\in L$, ${\rm Ann}_L(a\wedge b)=({\rm Ann}_L(a)\cup {\rm Ann}_L(b)]_L$, and\\ 
& ${\rm 2Ann}(L)\subseteq {\rm PAnn}(L)$;\\ 
$(iv)_L$ & for all $a,b\in L$, ${\rm Ann}_L(a\wedge b)=({\rm Ann}_L(a)\cup {\rm Ann}_L(b)]_L$;\end{tabular}

\begin{tabular}{ll}
$(5)_{\kappa ,L}$ & for each $U\subseteq L$ with $|U|\leq \kappa $, $({\rm Ann}_L(U)\cup {\rm Ann}_L({\rm Ann}_L(U))]_L=L$;\\ 
$(5)_{<\infty ,L}$ & for each finite $U\subseteq L$, $({\rm Ann}_L(U)\cup {\rm Ann}_L({\rm Ann}_L(U))]_L=L$;\\ 
$(5)_L$ & for each $U\subseteq L$, $({\rm Ann}_L(U)\cup {\rm Ann}_L({\rm Ann}_L(U))]_L=L$.\end{tabular}\end{flushleft}

Of course, annihilators are nonempty, since each of them contains $0$. Note also that, for any $U\subseteq V\subseteq L$, we have ${\rm Ann}_L(V)\subseteq {\rm Ann}_L(U)$, hence ${\rm Ann}_L({\rm Ann}_L(U))\subseteq {\rm Ann}_L({\rm Ann}_L(V))$.

Since $0,1\in {\cal B}(L)$, we have ${\rm Ann}_L(\emptyset )=L=(1]_L={\rm Ann}_L(0)\in {\rm PAnn}(L)\cap \{(e]_L\ |\ e\in {\cal B}(L)\}$ and ${\rm Ann}_L({\rm Ann}_L(\emptyset ))={\rm Ann}_L(L)=\{0\}=(0]_L={\rm Ann}_L(1)\in {\rm PAnn}(L)\cap \{(e]_L\ |\ e\in {\cal B}(L)\}$, hence conditions $(1)_{0,L}$ and $(5)_{0,L}$ are trivially satisfied; we may also note that ${\rm PAnn}(L)={\cal A}nn_1(L)$ and ${\rm P2Ann}(L)={\rm 2Ann}_1(L)$.

Clearly, if $L$ is distributive, then every annihilator of $L$ is an ideal of $L$.

\begin{remark} If $L$ is a frame, then every annihilator of $L$ is a principal ideal of $L$. Indeed, if $L$ is a frame, then, clearly, for all $U\subseteq L$, $\bigvee {\rm Ann}_L(U)\in {\rm Ann}_L(U)$, hence the ideal ${\rm Ann}_L(U)$ is principal.\label{frameannpid}\end{remark}

Of course, if ${\cal B}(L)$ is a distributive sublattice of $L$, in particular if $L$ is distributive, then ${\cal B}(L)$ is a Boolean sublattice of $L$.

Note that, for any bounded lattice $L$ and any cardinalities $\kappa \leq \mu $ and any $i\in \overline{1,5}$:\begin{itemize}
\item $(4)_{\kappa ,L}$ implies $(iv)_L$;
\item $(i)_{\mu ,L}$ implies $(i)_{\kappa ,L}$, hence, if the converse implication holds, as well, then $(i)_{\kappa ,L}$ is equivalent to $(i)_{\nu ,L}$ for any cardinality $\nu $ with $\kappa \leq \nu \leq \mu $;
\item $(i)_{<\infty ,L}$ is equivalent to $(i)_{\nu ,L}$ being valid for all finite cardinalities $\nu $;
\item $(i)_L$ is equivalent to $(i)_{\nu ,L}$ being valid for all cardinalities $\nu $.\end{itemize}

For any nonempty family $(U_i)_{i\in I}$ of subsets of $L$, clearly $\displaystyle {\rm Ann}_L(\bigcup _{i\in I}U_i)=\linebreak \bigcap _{i\in I}{\rm Ann}_L(U_i)$. For any family $(a_i)_{i\in I}\subseteq L$ having a meet in $L$, we have $\displaystyle \bigcap _{i\in I}(a_i]_L=(\bigwedge _{i\in I}a_i]_L$. Trivially, if $L$ is strongly Stone, then $L$ is Stone, and, by the above, the converse holds if ${\cal A}nn(L)={\rm PAnn}(L)$ and ${\cal B}(L)$ or ${\cal B}(L)$ is closed w.r.t. arbitrary meets; in particular, $(2)_L$ implies $(1)_L$.

If $L$ is distributive, then, for any $n\in \N ^*$ and any $u_1,\ldots ,u_n\in L$, ${\rm Ann}_L(\{u_1,\ldots ,u_n\})={\rm Ann}_L(u_1\vee \ldots \vee u_n)\in {\rm PAnn}(L)$, so ${\cal A}nn_{<\infty }(L)={\rm PAnn}(L)$, hence $(1)_{1,L}$ is equivalent to $(1)_{<\infty ,L}$, that is $L$ is a Stone lattice iff it satisfies $(1)_{<\infty ,L}$, and it immediately follows that ${\rm Ann}_L(U)={\rm Ann}_L((U]_L)$ for all $U\subseteq L$ and thus, for any family $(I_k)_{k\in K}$ of ideals of $L$, $\displaystyle {\rm Ann}_L(\bigvee _{k\in K}I_k)={\rm Ann}_L((\bigcup _{k\in K}I_k]_L)={\rm Ann}_L(\bigcup _{k\in K}I_k)=\bigcap _{k\in K}{\rm Ann}_L(I_k)$.

\begin{remark} Let $\kappa $ be a nonzero cardinality.

If $(x_i)_{i\in I}\subseteq L$ such that $\displaystyle \bigvee _{i\in I}x_i\in L$ and $\displaystyle x\wedge (\bigvee _{i\in I}x_i)=\bigvee _{i\in I}(x\wedge x_i)$ for all $x\in L$, then clearly $\displaystyle {\rm Ann}_L(\{x_i\ |\ i\in I\})={\rm Ann}_L(\bigvee _{i\in I}x_i)\in {\rm PAnn}(L)$.

Thus, if $L$ is closed w.r.t. the joins of all families of elements of cardinality at most $\kappa $ and has the meet distributive w.r.t. the joins of families of cardinalities at most $\kappa $, then ${\rm Ann}_L(U)={\rm Ann}_L(\bigvee U)$ for all $U\subseteq L$ with $|U|\leq \kappa $, thus ${\cal A}nn_{\kappa }(L)={\rm PAnn}(L)$.

Also, if $L$ is a frame, then ${\rm Ann}_L(U)={\rm Ann}_L(\bigvee U)$ for all $U\subseteq L$, thus ${\cal A}nn(L)={\rm PAnn}(L)$.

If ${\cal A}nn_{\kappa }(L)={\rm PAnn}(L)$, in particular if $L$ is closed w.r.t. the joins of families of elements of cardinality at most $\kappa $ and has the meet distributive w.r.t. such joins, then $(1)_{1,L}$ is equivalent to $(1)_{\kappa ,L}$ (thus to $(1)_{\lambda ,L}$ for any nonzero cardinality $\lambda \leq \kappa $), and ${\rm 2Ann}_{\kappa }(L)={\rm P2Ann}(L)$, hence ${\rm P2Ann}(L)\subseteq {\rm PAnn}(L)$ iff ${\rm 2Ann}_{\kappa }(L)\subseteq {\rm PAnn}(L)$, thus $(4)_{1,L}$ is equivalent to $(4)_{\kappa ,L}$ (thus to $(4)_{\lambda ,L}$ for any nonzero cardinality $\lambda \leq \kappa $).

If ${\cal A}nn(L)={\rm PAnn}(L)$, in particular if $L$ is a frame, then $(1)_{1,L}$ is equivalent to $(1)_L$ (thus to $(1)_{\lambda ,L}$ for any nonzero cardinality $\lambda $), and ${\rm 2Ann}(L)={\rm P2Ann}(L)\subseteq {\cal A}nn(L)={\rm PAnn}(L)$, thus the second part of condition $(4)_L$ is satisfied, which means that $(4)_L$ is equivalent to $(iv)_L$ and thus to $(4)_{\lambda ,L}$ for any nonzero cardinality $\lambda $.\label{mcplt}\end{remark}

As an example, note that any Boolean lattice is Stone, because, if $L$ is Boolean, then ${\rm Ann}_L(e)=(\neg \, e]_L$ for all $e\in L$, thus, by the above, any complete Boolean lattice is strongly Stone.

\begin{theorem}\begin{enumerate}
\item\label{davey1} If $L$ is a bounded distributive lattice, then the conditions $(1)_{\kappa ,L}$, $(2)_{\kappa ,L}$, $(3)_{\kappa ,L}$, $(4)_{\kappa ,L}$ and $(5)_{\kappa ,L}$ are equivalent for any nonzero cardinality $\kappa $.
\item\label{davey3} If $L$ is a bounded distributive lattice, then conditions $(1)_L$, $(2)_L$, $(3)_L$, $(4)_L$ and $(5)_L$ are equivalent.
\item\label{davey0} Let $m$ be a nonzero cardinality. If $L$ is a bounded distributive lattice having ${\cal A}nn_m(L)={\rm PAnn}(L)$, in particular if the bounded lattice $L$ is closed w.r.t. the joins of all families of elements of cardinality at most $m$ and has the meet distributive w.r.t. such joins then, for any $h,i\in \overline{1,5}$ and any nonzero cardinality $\kappa \leq m$, conditions $(h)_{\kappa ,L}$ and $(i)_{<\infty ,L}$ are equivalent, in particular the Boolean center of $L$ is $m$--complete and $L$ is Stone iff it is $m$--Stone.
\item\label{davey2} If $L$ is a bounded distributive lattice with ${\cal A}nn(L)={\rm PAnn}(L)$, in particular if $L$ is a frame, then, for any $h,i,j\in \overline{1,5}$ and any nonzero cardinality $\kappa $, conditions $(iv)_L$, $(h)_{\kappa ,L}$, $(i)_{<\infty ,L}$ and $(j)_L$ are equivalent, in particular the Boolean center of $L$ is complete and $L$ is Stone iff it is strongly Stone.\end{enumerate}\label{davey}\end{theorem}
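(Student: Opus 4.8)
The plan is to obtain parts \ref{davey1} and \ref{davey3} as a reformulation of \cite[Theorem $1$]{dav}, and then to deduce parts \ref{davey0} and \ref{davey2} from part \ref{davey1} together with the remarks already established in this section.

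For parts \ref{davey1} and \ref{davey3}, the first thing I would check is that the conditions $(i)_{\kappa ,L}$, which were written so as to be meaningful for a not necessarily distributive $L$, coincide with Davey's five conditions once $L$ is assumed distributive: then every annihilator is an ideal of $L$ (first sentence of Remark \ref{frameannpid}), ${\cal B}(L)$ is automatically a Boolean sublattice of $L$, and, under the canonical identification of a principal ideal with its generator, $(3)_{\kappa ,L}$ is exactly Davey's condition. Granting this, part \ref{davey1} is \cite[Theorem $1$]{dav} when $\kappa $ is infinite, while for finite $\kappa $ it follows from the facts recorded above for distributive $L$, namely that $(i)_{\kappa ,L}\Leftrightarrow (i)_{1,L}\Leftrightarrow (i)_{<\infty ,L}$ and that all of $(1)_{1,L},\ldots ,(5)_{1,L}$ are mutually equivalent. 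Part \ref{davey3} is then a formal consequence: $(i)_L$ is the conjunction of $(i)_{\nu ,L}$ over all cardinalities $\nu $, each $(i)_{\nu ,L}$ with $\nu \neq 0$ is equivalent to $(j)_{\nu ,L}$ by part \ref{davey1}, and $(i)_{0,L}$ is implied by $(i)_{1,L}$ by monotonicity in the cardinality; hence $(i)_L\Leftrightarrow (j)_L$.

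For part \ref{davey0}, the hypothesis on $m$ forces $L$ to be distributive once $m\geq 2$, since pairwise joins always exist, so that $x\wedge (a\vee b)=(x\wedge a)\vee (x\wedge b)$ holds identically (the case $m\leq 1$ being degenerate). Fixing $i,j\in \overline{1,5}$ and a nonzero $\kappa \leq m$, I would chain the following equivalences: $(i)_{\kappa ,L}\Leftrightarrow (1)_{\kappa ,L}$ by part \ref{davey1}; $(1)_{\kappa ,L}\Leftrightarrow (1)_{1,L}$ by Remark \ref{mcplt}, using exactly the present hypothesis on $m$; and $(1)_{1,L}\Leftrightarrow (j)_{1,L}\Leftrightarrow (j)_{<\infty ,L}$, again by Remark \ref{mcplt} since $L$ is distributive. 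Composing, $(i)_{\kappa ,L}\Leftrightarrow (j)_{<\infty ,L}$.

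For part \ref{davey2}, a frame is complete and completely distributive, hence distributive and, a fortiori, satisfies the hypothesis of part \ref{davey0} for every $m$. Moreover, by Remark \ref{mcplt}, in a frame ${\rm Ann}_L(U)={\rm Ann}_L(\bigvee U)$ for all $U\subseteq L$, so ${\rm 2Ann}(L)\subseteq {\cal A}nn(L)={\rm PAnn}(L)$, the second clause of each $(4)_{\kappa ,L}$ is automatically true, and $(4)_{\kappa ,L}$ is equivalent to $(iv)_L$ for \emph{every} cardinality $\kappa $. Combining this with part \ref{davey1}, for every nonzero $\kappa $ and every $h\in \overline{1,5}$ we get $(h)_{\kappa ,L}\Leftrightarrow (4)_{\kappa ,L}\Leftrightarrow (iv)_L$, so $(iv)_L$ and all the $(h)_{\kappa ,L}$ with $\kappa \neq 0$ are pairwise equivalent; since $(i)_{<\infty ,L}$ is the conjunction of the $(i)_{\nu ,L}$ over finite $\nu $, $(j)_L$ the conjunction over all $\nu $, and $(i)_{0,L}$, $(j)_{0,L}$ follow from $(i)_{1,L}$, $(j)_{1,L}$ by monotonicity, it follows that $(i)_{<\infty ,L}\Leftrightarrow (iv)_L\Leftrightarrow (j)_L$, which closes the equivalence. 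The main obstacle I anticipate is not mathematical depth — Davey's Theorem and the frame computations in Remark \ref{mcplt} carry the load — but the bookkeeping: establishing once and for all that the distributivity--free rewriting of Davey's conditions is faithful, and handling the degenerate cardinality $\nu =0$ and the passages between the ``$\kappa $'', ``$<\infty $'' and ``all $\nu $'' versions without circularity.
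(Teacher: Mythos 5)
Your proof is correct and takes essentially the same route as the paper: part (\ref{davey1}) is \cite[Theorem 1]{dav} for infinite $\kappa $ plus Remark \ref{mcplt} for finite $\kappa $, part (\ref{davey3}) follows formally, and parts (\ref{davey0}) and (\ref{davey2}) come from part (\ref{davey1}) combined with the closure/frame computations of Remark \ref{mcplt}, which is exactly the paper's (much terser) argument. Your additional bookkeeping --- checking that the distributivity-free rewriting of Davey's conditions is faithful, handling the $\nu =0$ cases, and noting that the hypothesis of (\ref{davey0}) yields distributivity only when $m\geq 2$ --- merely makes explicit what the paper leaves implicit.
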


\begin{proof} (\ref{davey1}) is \cite[Theorem $1$]{dav} for $\kappa $ infinite and part of Remark \ref{mcplt} for $\kappa $ finite, and it clearly implies (\ref{davey3}).

\noindent (\ref{davey0}),(\ref{davey2}) By (\ref{davey1}) and Remark \ref{mcplt}.\end{proof}

\begin{definition}{\rm \cite{erhard},\cite{cze2}} Let $[\cdot ,\cdot ]$ be a binary operation on $L$. The algebra $(L,\vee ,\wedge ,[\cdot ,\cdot ],0,1)$ (which we shall also denote, simply, by $(L,[\cdot ,\cdot ])$) is called a {\em commutator lattice} and the operation $[\cdot ,\cdot ]$ is called {\em commutator} iff $(L,\vee ,\wedge ,0,1)$ is a complete lattice with lattice bounds $0$ and $1$ and, for all $x,y\in L$ and any family $(y_i)_{i\in I}\subseteq L$:\begin{itemize}
\item $[x,y]=[y,x]\leq x\wedge y$ ($[\cdot ,\cdot ]$ is commutative and smaller than its arguments);
\item $\displaystyle [x,\bigvee _{i\in I}y_i]=\bigvee _{i\in I}[x,y_i]$ ($[\cdot ,\cdot ]$ is completely distributive w.r.t. the join).\end{itemize}\label{defcommlat}\end{definition}

\begin{remark} For any complete lattice $L$, we have the equivalence: $(L,[\cdot ,\cdot ])$ is a commutator lattice with $[\cdot ,\cdot ]=\wedge $ iff $L$ is a frame.\label{framecommlat}\end{remark}

Let $(L,\vee ,\wedge ,[\cdot ,\cdot ],0,1)$ be a commutator lattice. We call $p$ a {\em prime element} of $L$ iff $p\in L\setminus \{1\}$ and, for all $a,b\in L$, if $[a,b]\leq p$, then $a\leq p$ or $b\leq p$. We denote by ${\rm Spec}_L$ the set of the prime elements of $L$. Note that, if $[\cdot ,\cdot ]=\wedge $, then ${\rm Spec}_L$ is the set of the meet--prime elements of $L$ and $L$ is distributive (actually a frame, by Remark \ref{framecommlat}), hence ${\rm Spec}_L={\rm Mi}(L)\setminus \{1\}\supseteq {\rm Smi}(L)$.

For any $x\in L$, we denote by $V(x)=[x)_L\cap {\rm Spec}_L$, by $\rho (x)=\bigwedge V(x)=\bigwedge \{p\in {\rm Spec}_L\ |\ x\leq p\}$ and by $R(L)=\{\rho (x)\ |\ x\in L\}$. We call $\rho (x)$ the {\em radical of $x$}, and the elements of $R(L)$ {\em radical elements} of $L$. Clearly, ${\rm Spec}_L\subseteq R(L)=\{x\in L\ |\ \rho (x)=x\}$.

Recall that all elements of an algebraic lattice are meets of strictly meet--irreducible elements, thus, if $L$ is algebraic and $[\cdot ,\cdot ]=\wedge $, then $R(L)=L$; see also Remarks \ref{commmeet} and \ref{cgrad} and Proposition \ref{allradical}, (\ref{allradical2}), below.

\begin{example}{\rm \cite{agl},\cite{fremck}} If ${\cal V}$ is a congruence--modular variety, $A$ is a member of ${\cal V}$ and $[\cdot ,\cdot ]_A$ is the (modular) commutator of $A$, then $({\rm Con}(A),\vee ,\cap ,[\cdot ,\cdot ]_A,\Delta _A,\nabla _A)$ is a commutator lattice.\end{example} 

Let $A$ be an arbitrary member of a congruence--modular variety ${\cal V}$. Then we will denote the modular commutator of $A$ as above and the set ${\rm Spec}_{{\rm Con}(A)}$ of the prime elements of the commutator lattice $({\rm Con}(A),\vee ,\cap ,[\cdot ,\cdot ]_A,\Delta _A,\nabla _A)$, called {\em prime congruences} of $A$, by ${\rm Spec}(A)$. The elements of $R({\rm Con}(A))$ are called {\em radical congruences} of $A$. Recall that $A$ is said to be {\em semiprime} iff $\Delta _A$ is a radical congruence of $A$.

Recall that, if ${\cal V}$ is congruence--distributive, then ${\cal V}$ has no skew congruences and the commutator $[\cdot ,\cdot ]_A$ coincides to the intersection of congruences. If $[\cdot ,\cdot ]_A$ equals the intersection, then $[\theta ,\nabla _A]_A=\theta \cap \nabla _A=\theta $ for all $\theta \in {\rm Con}(A)$, and, by the above, $A$ is congruence--distributive and, moreover, ${\rm Con}(A)$ is a frame, and we have ${\rm Smi}({\rm Con}(A))\subseteq {\rm Mi}({\rm Con}(A))\setminus \{\nabla _A\}={\rm Spec}(A)$, so that $R({\rm Con}(A))={\rm Con}(A)$ since the lattice ${\rm Con}(A)$ is algebraic, in particular $A$ is semiprime.

If ${\cal V}$ is semi--degenerate, then ${\cal C}$ has no skew congruences \cite[Theorem 8.5, p. 85]{fremck}, $\nabla _A$ is a compact congruence of $A$, $[\theta ,\nabla _A]_A=\theta $ for all $\theta \in {\rm Con}(A)$, any maximal congruence of $A$ is prime and each proper congruence of $A$ is included in a prime congruence \cite[Theorem $5.3$]{agl}; see also Lemma \ref{smimaxspec} below.

\section{Transferring Conditions $(i)_{\kappa ,\cdot }$ between Bounded Lattices and Their Quotients, and Related Results}
\label{preservation}

Throughout this section, $M$ shall be an arbitrary bounded lattice and $\theta \in {\rm Con}(M)$, unless mentioned otherwise.

\begin{remark} Let $x\in M$ and $U\subseteq M$. Then, clearly, $x\in {\rm Ann}_M(U)$ implies $x/\theta \in {\rm Ann}_{M/\theta }(U/\theta )$, hence ${\rm Ann}_M(U)/\theta \subseteq {\rm Ann}_{M/\theta }(U/\theta )$.\label{inclann}\end{remark}

\begin{lemma} If $M$ is a bounded lattice and a $\theta \in {\rm Con}(M)$ has $0/\theta =\{0\}$, then, for all $x\in M$ and all $U,V\subseteq M$:\begin{enumerate}
\item\label{anntheta0} $x/\theta \in {\rm Ann}_{M/\theta }(U/\theta )$ iff $x\in {\rm Ann}_M(U)$ iff $x/\theta \subseteq {\rm Ann}_M(U)$, and $x/\theta \in {\rm Ann}_{M/\theta }(\linebreak {\rm Ann}_{M/\theta }(U/\theta ))$ iff $x\in {\rm Ann}_M({\rm Ann}_M(U))$ iff $x/\theta \subseteq {\rm Ann}_M({\rm Ann}_M(U))$;
\item\label{anntheta1} ${\rm Ann}_M(U)/\theta ={\rm Ann}_{M/\theta }(U/\theta )$ and ${\rm Ann}_M({\rm Ann}_M(U))/\theta ={\rm Ann}_{M/\theta }({\rm Ann}_{M/\theta }(U/\theta ))$;
\item\label{anntheta2} $U/\theta \subseteq {\rm Ann}_M(V)/\theta $ iff $U\subseteq {\rm Ann}_M(V)$, and ${\rm Ann}_M(U)/\theta \!=\!{\rm Ann}_M(V)/\theta $ iff\linebreak  ${\rm Ann}_M(U)={\rm Ann}_M(V)$.\end{enumerate}\label{anntheta}\end{lemma}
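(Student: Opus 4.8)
The plan is to establish part (\ref{anntheta0}) first, as the other two parts are immediate consequences. For the first equivalence in (\ref{anntheta0}), the ``if'' direction is already Remark \ref{inclann} (which holds for any $\theta$). For the converse, suppose $x/\theta \in {\rm Ann}_{M/\theta }(U/\theta )$; then for every $u \in U$ we have $(x \wedge u)/\theta = x/\theta \wedge u/\theta = 0/\theta = \{0\}$, so $x \wedge u = 0$, whence $x \in {\rm Ann}_M(u)$ for all $u$, i.e.\ $x \in {\rm Ann}_M(U)$. This is where the hypothesis $0/\theta = \{0\}$ does the real work: it lets us conclude $x \wedge u = 0$ from $(x\wedge u)/\theta = 0/\theta$, i.e.\ the class of $0$ is a singleton so membership in it forces equality with $0$.

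For the second equivalence in (\ref{anntheta0}), I would like to simply apply the first equivalence twice, with $U$ replaced by ${\rm Ann}_M(U)$ at the outer step. The subtlety — and the one point that needs a short argument rather than a one-liner — is that ${\rm Ann}_{M/\theta }(U/\theta )$ is not literally of the form ``$W/\theta$'' for a given subset $W\subseteq M$ until we know part (\ref{anntheta1}). So the clean order is: from the first equivalence, $x/\theta \in {\rm Ann}_{M/\theta}(U/\theta)$ iff $x \in {\rm Ann}_M(U)$; this says exactly that ${\rm Ann}_{M/\theta}(U/\theta) = \{x/\theta \mid x \in {\rm Ann}_M(U)\} = {\rm Ann}_M(U)/\theta$, which is the first half of (\ref{anntheta1}). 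Now I can feed $V := {\rm Ann}_M(U)$ into the first equivalence again: $x/\theta \in {\rm Ann}_{M/\theta}(V/\theta)$ iff $x \in {\rm Ann}_M(V)$, and since $V/\theta = {\rm Ann}_{M/\theta}(U/\theta)$ by what we just proved, the left side is $x/\theta \in {\rm Ann}_{M/\theta}({\rm Ann}_{M/\theta}(U/\theta))$, giving both the second equivalence in (\ref{anntheta0}) and — reading it as an equality of sets of classes — the second half of (\ref{anntheta1}).

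For part (\ref{anntheta2}): the first claim, $U/\theta \subseteq {\rm Ann}_M(V)/\theta$ iff $U \subseteq {\rm Ann}_M(V)$, follows because, by (\ref{anntheta1}), ${\rm Ann}_M(V)/\theta = {\rm Ann}_{M/\theta}(V/\theta)$, so the left-hand inclusion says $u/\theta \in {\rm Ann}_{M/\theta}(V/\theta)$ for all $u\in U$, which by (\ref{anntheta0}) is equivalent to $u \in {\rm Ann}_M(V)$ for all $u \in U$. For the second claim, ${\rm Ann}_M(U)/\theta = {\rm Ann}_M(V)/\theta$ iff ${\rm Ann}_M(U) = {\rm Ann}_M(V)$: the ``if'' direction is trivial, and for ``only if'' note that an annihilator ${\rm Ann}_M(U)$ is a down-set closed under the existing meets, but more simply, since $0/\theta = \{0\}$, the map $W \mapsto W/\theta$ is injective on annihilators — indeed if ${\rm Ann}_M(U)/\theta = {\rm Ann}_M(V)/\theta$ then by (\ref{anntheta1}) ${\rm Ann}_{M/\theta}(U/\theta) = {\rm Ann}_{M/\theta}(V/\theta)$, and applying (\ref{anntheta0}) to each side, $x \in {\rm Ann}_M(U)$ iff $x/\theta \in {\rm Ann}_{M/\theta}(U/\theta) = {\rm Ann}_{M/\theta}(V/\theta)$ iff $x \in {\rm Ann}_M(V)$, so ${\rm Ann}_M(U) = {\rm Ann}_M(V)$. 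The only genuine obstacle anywhere is the bookkeeping in (\ref{anntheta0})–(\ref{anntheta1}) to avoid circularity — once the two-step application of the first equivalence is set up in the right order, everything else is formal.
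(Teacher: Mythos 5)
Your proposal is correct and follows essentially the same route as the paper: use $0/\theta=\{0\}$ together with Remark \ref{inclann} to get the first equivalence of (\ref{anntheta0}), read it as the equality ${\rm Ann}_{M/\theta}(U/\theta)={\rm Ann}_M(U)/\theta$, bootstrap by substituting ${\rm Ann}_M(U)$ for $U$ to obtain the double-annihilator statements, and then deduce (\ref{anntheta2}) formally. The careful ordering you describe to avoid circularity is exactly how the paper organizes its argument.
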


\begin{proof} (\ref{anntheta0}) and (\ref{anntheta1}) If $x/\theta \in {\rm Ann}_{M/\theta }(U/\theta )$, then, for all $u\in U$, we have $x\wedge u\in 0/\theta =\{0\}$, thus $x\in {\rm Ann}_M(U)$, so $x/\theta \in {\rm Ann}_M(U)/\theta $, hence ${\rm Ann}_{M/\theta }(U/\theta )\subseteq {\rm Ann}_M(U)/\theta $. We have the converse implication and inclusion from Remark \ref{inclann}, therefore ${\rm Ann}_{M/\theta }(U/\theta )={\rm Ann}_M(U)/\theta $ and: $x\in {\rm Ann}_M(U)$ iff $x/\theta \in {\rm Ann}_{M/\theta }(U/\theta )$, so that: $x\in {\rm Ann}_M({\rm Ann}_M(U))$ iff $x/\theta \in {\rm Ann}_{M/\theta }({\rm Ann}_M(U)/\theta )={\rm Ann}_{M/\theta }({\rm Ann}_{M/\theta }(U/\theta ))$, thus ${\rm Ann}_M({\rm Ann}_M(U))/\theta $\linebreak $={\rm Ann}_{M/\theta }({\rm Ann}_{M/\theta }(U/\theta ))$.

Clearly, if $x/\theta \subseteq {\rm Ann}_M(U)$, then $x\in {\rm Ann}_M(U)$, while, if $x/\theta \subseteq {\rm Ann}_M({\rm Ann}_M(U))$, then $x\in {\rm Ann}_M({\rm Ann}_M(U))$. By the above, for any $y\in x/\theta $, we have: $x\in {\rm Ann}_M(U)$ iff $x/\theta \in {\rm Ann}_{M/\theta }(U/\theta )$ iff $y/\theta \in {\rm Ann}_{M/\theta }(U/\theta )$ iff $y\in {\rm Ann}_M(U)$, and, similarly, $x\in {\rm Ann}_M({\rm Ann}_M(U))$ iff $x/\theta \in {\rm Ann}_{M/\theta }({\rm Ann}_{M/\theta }(U/\theta ))$ iff $y/\theta \in {\rm Ann}_{M/\theta }({\rm Ann}_{M/\theta }(U/\theta ))$ iff $y\in {\rm Ann}_M({\rm Ann}_M(U))$. Therefore $x\in {\rm Ann}_M(U)$ implies $x/\theta \subseteq {\rm Ann}_M(U)$, while $x\in {\rm Ann}_M({\rm Ann}_M(U))$ implies $x/\theta \subseteq {\rm Ann}_M({\rm Ann}_M(U))$.

\noindent (\ref{anntheta2}) By (\ref{anntheta1}), for all $u\in U$, we have: $u/\theta \in {\rm Ann}_M(V)/\theta ={\rm Ann}_{M/\theta }(V/\theta )$ iff $u\in {\rm Ann}_M(V)$, hence the first equivalence, therefore: ${\rm Ann}_M(U)/\theta ={\rm Ann}_M(V)/\theta $ iff ${\rm Ann}_M(U)/\theta \subseteq {\rm Ann}_M(V)/\theta $ and ${\rm Ann}_M(V)/\theta \subseteq {\rm Ann}_M(U)/\theta $ iff ${\rm Ann}_M(U)\subseteq {\rm Ann}_M(V)$ and ${\rm Ann}_M(V)\subseteq {\rm Ann}_M(U)$ iff ${\rm Ann}_M(U)={\rm Ann}_M(V)$.\end{proof}

\begin{lemma} Let $M$ be a bounded lattice, $\theta \in {\rm Con}(M)$ such that $0/\theta =\{0\}$ and $\kappa $ a cardinality. Then:\begin{enumerate}
\item\label{panntheta1} the maps $P\mapsto P/\theta $ from: ${\cal A}nn(M)$ to ${\cal A}nn(M/\theta )$, ${\cal A}nn_{\kappa }(M)$ to ${\cal A}nn_{\kappa }(M/\theta )$, ${\rm PAnn}(M)$ to ${\rm PAnn}(M/\theta )$, ${\rm 2Ann}(M)$ to ${\rm 2Ann}(M/\theta )$, ${\rm 2Ann}_{\kappa }(M)$ to ${\rm 2Ann}_{\kappa }(M/\theta )$, respectively ${\rm P2Ann}(M)$ to ${\rm P2Ann}(M/\theta )$, are order isomorphisms;
\item\label{annkmth2} ${\cal A}nn_{\kappa }(M/\theta )={\rm PAnn}(M/\theta )$ iff ${\cal A}nn_{\kappa }(M)={\rm PAnn}(M)$; ${\cal A}nn(M/\theta )={\rm PAnn}(M/\theta )$ iff ${\cal A}nn(M)={\rm PAnn}(M)$; ${\rm 2Ann}_{\kappa }(M/\theta )\subseteq {\rm PAnn}(M/\theta )$ iff ${\rm 2Ann}_{\kappa }(M)\subseteq {\rm PAnn}(M)$; ${\rm 2Ann}(M/\theta )\subseteq {\rm PAnn}(M/\theta )$ iff ${\rm 2Ann}(M)\subseteq {\rm PAnn}(M)$;
\item\label{panntheta0} for all $U\subseteq M$: ${\rm Ann}
_M(U)\in {\rm Id}(M)$ iff ${\rm Ann}_{M/\theta }(U/\theta )\in {\rm Id}(M/\theta )$, and\linebreak ${\rm Ann}_M({\rm Ann}_M(U))\in {\rm Id}(M)$ iff ${\rm Ann}_{M/\theta }({\rm Ann}_{M/\theta }(U/\theta ))\in {\rm Id}(M/\theta )$;
\item\label{panntheta2} ${\cal A}nn(M)\subseteq {\rm Id}(M)$ iff ${\cal A}nn(M/\theta )\subseteq {\rm Id}(M/\theta )$; ${\rm PAnn}(M)\subseteq {\rm Id}(M)$ iff ${\rm PAnn}(M/\theta )\subseteq {\rm Id}(M/\theta )$; ${\rm P2Ann}(M)\subseteq {\rm Id}(M)$ iff ${\rm P2Ann}(M/\theta )\subseteq {\rm Id}(M/\theta )$;
\item\label{panntheta5} for all $U,V\subseteq M$ such that ${\rm Ann}_M(U),{\rm Ann}_M(V)\in {\rm Id}(M)$, we have, in ${\rm Id}(M)$ and ${\rm Id}(M/\theta )$: ${\rm Ann}_M(U\cap V)={\rm Ann}_M(U)\vee {\rm Ann}_M(V)\in {\rm Id}(M)$ iff ${\rm Ann}_{M/\theta }(U/\theta \cap V/\theta )={\rm Ann}_{M/\theta }(U/\theta )\vee {\rm Ann}_{M/\theta }(V/\theta )\in {\rm Id}(M/\theta )$;
\item\label{panntheta8} if ${\cal A}nn(M)\subseteq {\rm Id}(M)$ and ${\rm Ann}_M(U\cap V)={\rm Ann}_M(U)\vee {\rm Ann}_M(V)$ for all $U,V\subseteq M$, then ${\cal A}nn(M)$ and ${\cal A}nn(M/\theta )$ are sublattices of ${\rm Id}(M)$ and ${\rm Id}(M/\theta )$, respectively, and the map $P\mapsto P/\theta $ from ${\cal A}nn(M)$ to ${\cal A}nn(M/\theta )$ is a lattice isomorphism;
\item\label{panntheta6} for all $a,b\in M$ such that ${\rm Ann}_M(a),{\rm Ann}_M(b)\in {\rm Id}(M)$, we have, in ${\rm Id}(M)$ and ${\rm Id}(M/\theta )$: ${\rm Ann}_M(a\vee b)={\rm Ann}_M(a)\cap {\rm Ann}_M(b)\in {\rm Id}(M)$ iff ${\rm Ann}_{M/\theta }(a/\theta \vee b/\theta )={\rm Ann}_{M/\theta }(a/\theta )\cap {\rm Ann}_{M/\theta }(b/\theta )\in {\rm Id}(M/\theta )$, and ${\rm Ann}_M(a\wedge b)={\rm Ann}_M(a)\vee {\rm Ann}_M(b)\in {\rm Id}(M)$ iff ${\rm Ann}_{M/\theta }(a/\theta \wedge b/\theta )={\rm Ann}_{M/\theta }(a/\theta )\vee {\rm Ann}_{M/\theta }(b/\theta )\in {\rm Id}(M/\theta )$;
\item\label{panntheta7} for all $a,b\in M$ such that ${\rm Ann}_M({\rm Ann}_M(a)),{\rm Ann}_M({\rm Ann}_M(b))\in {\rm Id}(M)$, we have, in ${\rm Id}(M)$ and ${\rm Id}(M/\theta )$: ${\rm Ann}_M({\rm Ann}_M(a\vee b))={\rm Ann}_M({\rm Ann}_M(a))\vee {\rm Ann}_M({\rm Ann}_M(b))\in {\rm Id}(M)$ iff ${\rm Ann}_{M/\theta }({\rm Ann}_{M/\theta }(a/\theta \vee b/\theta ))={\rm Ann}_{M/\theta }({\rm Ann}_{M/\theta }(a/\theta ))\vee {\rm Ann}_{M/\theta }({\rm Ann}_{M/\theta }\linebreak (b/\theta ))\in {\rm Id}(M/\theta )$, and ${\rm Ann}_M({\rm Ann}_M(a\wedge b))={\rm Ann}_M({\rm Ann}_M(a))\cap {\rm Ann}_M({\rm Ann}_M(b))\linebreak \in {\rm Id}(M)$ iff ${\rm Ann}_{M/\theta }({\rm Ann}_{M/\theta }(a/\theta \wedge b/\theta ))={\rm Ann}_{M/\theta }({\rm Ann}_{M/\theta }(a/\theta ))\cap {\rm Ann}_{M/\theta }(\linebreak {\rm Ann}_{M/\theta }(b/\theta ))\in {\rm Id}(M/\theta )$;
\item\label{panntheta3} ${\rm PAnn}(M)$ is a sublattice of ${\rm Id}(M)$ such that the map $x\mapsto {\rm Ann}_M(x)$ is a lattice anti--morphism from $M$ to ${\rm PAnn}(M)$ iff ${\rm PAnn}(M/\theta )$ is a sublattice of ${\rm Id}(M/\theta )$ such that the map $y\mapsto {\rm Ann}_{M/\theta }(y)$ is a lattice anti--morphism from $M/\theta $ to\linebreak ${\rm PAnn}(M/\theta )$, and, if so, then the map $P\mapsto P/\theta $ from  ${\rm PAnn}(M)$ to ${\rm PAnn}(M/\theta )$ is a lattice isomorphism;
\item\label{panntheta4} ${\rm P2Ann}(M)$ is a sublattice of ${\rm Id}(M)$ such that the map $x\mapsto {\rm Ann}_M({\rm Ann}_M(x))$ is a lattice morphism from $M$ to ${\rm P2Ann}(M)$ iff ${\rm P2Ann}(M/\theta )$ is a sublattice of ${\rm Id}(M/\theta )$ such that the map $y\mapsto {\rm Ann}_{M/\theta }({\rm Ann}_{M/\theta }(y))$ is a lattice morphism from $M/\theta $ to ${\rm P2Ann}(M/\theta )$, and, if so, then the map $P\mapsto P/\theta $ from  ${\rm P2Ann}(M)$ to ${\rm P2Ann}(M/\theta )$ is a lattice isomorphism.\end{enumerate}\label{panntheta}\end{lemma}

\begin{proof} (\ref{panntheta1}) By Lemma \ref{anntheta}, (\ref{anntheta1}), these maps are well defined and surjective; by Lemma \ref{anntheta}, (\ref{anntheta2}), they are also injective, hence they are bijective. By Lemma \ref{anntheta}, (\ref{anntheta1}), these maps, as well as their inverses, preserve inclusion. Therefore they are order isomorphisms.

\noindent (\ref{annkmth2}) By (\ref{panntheta1}) and Lemma \ref{anntheta}, (\ref{anntheta2}).

\noindent (\ref{panntheta0}) From (\ref{annkmth2}) and the clear fact that $I/\theta \in {\rm Id}(M/\theta )$ for any $I\in {\rm Id}(M)$, we get the direct implications.

Now assume that ${\rm Ann}_{M/\theta }(U/\theta )\in {\rm Id}(M/\theta )$, and let $x,y,z\in M$ such that $x,y\in {\rm Ann}_M(U)$ and $x\geq z$, so that $x/\theta ,y/\theta \in {\rm Ann}_{M/\theta }(U/\theta )$ and $x/\theta \geq z/\theta $, thus $(x\vee y)/\theta ,z/\theta \in {\rm Ann}_{M/\theta }(U/\theta )$, hence $x\vee y,z\in {\rm Ann}_M(U)$ by Lemma \ref{anntheta}, (\ref{anntheta0}), therefore ${\rm Ann}_M(U)\in {\rm Id}(M)$.

Thus ${\rm Ann}_M(U)\in {\rm Id}(M)$ iff ${\rm Ann}_{M/\theta }(U/\theta )\in {\rm Id}(M/\theta )$. By Lemma \ref{anntheta}, (\ref{anntheta1}), from this we also get that ${\rm Ann}_M({\rm Ann}_M(U))\in {\rm Id}(M)$ iff ${\rm Ann}_{M/\theta }({\rm Ann}_{M/\theta }(U/\theta ))={\rm Ann}_{M/\theta }(\linebreak {\rm Ann}_M(U)/\theta )\in {\rm Id}(M/\theta )$.

\noindent (\ref{panntheta2}) By (\ref{panntheta0}).

\noindent (\ref{panntheta5}) If ${\rm Ann}_M(U),{\rm Ann}_M(V)\in {\rm Id}(M)$, then ${\rm Ann}_{M/\theta }(U/\theta ),{\rm Ann}_{M/\theta }(V/\theta )\in {\rm Id}(M/\theta )$ by (\ref{panntheta0}), so the equivalences in the enunciation follow from Lemma \ref{anntheta}, (\ref{anntheta1}), and the fact that the map $I\mapsto I/\theta $ is a lattice morphism from ${\rm Id}(M)$ to ${\rm Id}(M/\theta )$.

\noindent (\ref{panntheta8}) By (\ref{panntheta1}), (\ref{panntheta0}) and the fact that, for all $U,V\subseteq M$, ${\rm Ann}_M(U\cup V)={\rm Ann}_M(U)\cap {\rm Ann}_M(V)$ and the same goes for $U/\theta ,V/\theta $ in $M/\theta $.

\noindent (\ref{panntheta6}) and (\ref{panntheta7}) Similar to the proof of (\ref{panntheta5}).

\noindent (\ref{panntheta3}) By (\ref{panntheta1}), (\ref{panntheta0}), (\ref{panntheta2}) and (\ref{panntheta6}).

\noindent (\ref{panntheta4}) By (\ref{panntheta1}), (\ref{panntheta0}), (\ref{panntheta2}) and (\ref{panntheta7}).\end{proof}

\begin{proposition} Let $M$ be a bounded lattice and $\theta \in {\rm Con}(M)$ such that $0/\theta =\{0\}$.\begin{enumerate}
\item\label{quodist1} If $M/\theta $ is distributive, then ${\cal A}nn(M)\subseteq {\rm Id}(M)$ and ${\rm Ann}_M(U)={\rm Ann}_M((U]_M)$, so $\displaystyle {\rm Ann}_M(\bigvee _{k\in K}I_k)=\bigcap _{k\in K}{\rm Ann}_M(I_k)$ for any $(I_k)_{k\in K}\subseteq {\rm Id}(M)$.
\item\label{quodist2} Let $\kappa $ be a nonzero cardinality. If ${\cal A}nn_{\kappa }(M/\theta )={\rm PAnn}(M/\theta )$, in particular if $M/\theta $ is closed w.r.t. the joins of families of elements of cardinality at most $\kappa $ and has the meet distributive w.r.t. the joins of families of elements of cardinality at most $\kappa $, then ${\cal A}nn_{\kappa }(M)={\rm PAnn}(M)$, so $M$ is Stone iff $M$ is $\kappa $--Stone.

If $M$ and $M/\theta $ are closed w.r.t. the joins of families of elements of cardinality at most $\kappa $, $M/\theta $ has the meet distributive w.r.t. such joins and $\theta $ preserves such joins, then ${\rm Ann}_M(U)={\rm Ann}_M(\bigvee U)$ for any $U\subseteq M$ with $|U|\leq \kappa $.
\item\label{quodist3} If ${\cal A}nn(M/\theta )={\rm PAnn}(M/\theta )$, in particular if $M/\theta $ is a frame, then ${\cal A}nn(M)={\rm PAnn}(M)\subseteq {\rm PId}(M)$, so $M$ is Stone iff $M$ is strongly Stone.

If $M$ is complete, $M/\theta $ is a frame and $\theta $ preserves arbitrary joins, then ${\rm Ann}_M(U)={\rm Ann}_M(\bigvee U)$ for any $U\subseteq M$.\end{enumerate}\label{quodist}\end{proposition}

\begin{proof} (\ref{quodist1}) By Lemma \ref{anntheta}, (\ref{anntheta1}) and (\ref{anntheta2}), ${\rm Ann}_M(U)/\theta ={\rm Ann}_{M/\theta }(U/\theta )=\linebreak {\rm Ann}_{M/\theta }((U/\theta ]_{M/\theta })={\rm Ann}_{M/\theta }((U]_M/\theta )={\rm Ann}_M((U]_M)/\theta $, thus ${\rm Ann}_M(U)=\linebreak {\rm Ann}_M((U]_M)$, hence the equality for the family of ideals of $M$. 

Also, ${\rm Ann}_M(U)/\theta ={\rm Ann}_{M/\theta }(U/\theta )\in {\rm Id}(M/\theta )$, so that ${\rm Ann}_{M/\theta }(U/\theta )=\linebreak ({\rm Ann}_{M/\theta }(U/\theta )]_{M/\theta }=({\rm Ann}_M(U)/\theta ]_{M/\theta }=({\rm Ann}_M(U)]_M/\theta $, thus $({\rm Ann}_M(U)]_M/\theta \subseteq {\rm Ann}_M(U)/\theta $, hence $({\rm Ann}_M(U)]_M\subseteq {\rm Ann}_M(U)$, therefore ${\rm Ann}_M(U)=({\rm Ann}_M(U)]_M\in {\rm Id}(M)$.

\noindent (\ref{quodist2}) By Remark \ref{mcplt} and Lemma \ref{panntheta}, (\ref{annkmth2}), ${\cal A}nn_{\kappa }(M)={\rm PAnn}(M)$.

If an $U\subseteq M$ has $|U|\leq \kappa $, then ${\rm Ann}_M(U)/\theta ={\rm Ann}_{M/\theta }(\bigvee (U/\theta ))={\rm Ann}_{M/\theta }((\bigvee U)/\theta )\linebreak ={\rm Ann}_M(\bigvee U)/\theta $, hence ${\rm Ann}_M(U)={\rm Ann}_M(\bigvee U)$ by Lemma \ref{anntheta}, (\ref{anntheta2}). 

\noindent (\ref{quodist3}) By Remark \ref{mcplt} and Lemma \ref{panntheta}, (\ref{annkmth2}), ${\cal A}nn(M)={\rm PAnn}(M)$.

Additionally, $(\bigvee {\rm Ann}_M(U))/\theta =\bigvee ({\rm Ann}_M(U)/\theta )=\bigvee {\rm Ann}_{M/\theta }(U/\theta )\in {\rm Ann}_{M/\theta }(U/\theta )\linebreak ={\rm Ann}_M(U)/\theta $ by Remark \ref{frameannpid}, thus $\bigvee {\rm Ann}_M(U)\in {\rm Ann}_M(U)$, hence the ideal ${\rm Ann}_M(U)$ of $M$ is principal.

As in the proof in (\ref{quodist2}), here we obtain that, for any $U\subseteq M$, ${\rm Ann}_M(U)={\rm Ann}_M(\bigvee U)$.\end{proof}

\begin{remark} Let $e\in M$. Then $e=\max (e/\theta )$ iff, for all $x\in M$, we have the equivalence: $x/\theta \leq e/\theta $ iff $x\leq e$.

Indeed, the latter equivalence and the fact that $e\in e/\theta $ imply that $e=\max (e/\theta )$, while, if the latter equality holds and $x/\theta \leq e/\theta $, then $(x\vee e)/\theta =e/\theta $, that is $x\vee e\in e/\theta $, so that $x\vee e\leq \max (e/\theta )=e$, thus $x\leq e$.

Hence, if $e=\max (e/\theta )$, then, for all $U\subseteq M$, we have: $U/\theta \subseteq (e]_M/\theta =(e/\theta ]_{M/\theta }$ iff $U\subseteq (e]_M$.\label{emaxcls}\end{remark}

Note that Theorem \ref{davey}, (\ref{davey1}), relies on the fact that ${\cal B}({\rm Id}(D))=\{(e]_D\ |\ e\in {\cal B}(D)\}$ for any bounded distributive lattice $D$. Let us see that we can transfer this property from $M/\theta $ to $M$.

\begin{remark} Clearly, ${\cal B}(M)/\theta \subseteq {\cal B}(M/\theta )$, thus the map $p_{\theta }\mid _{{\cal B}(M)}:{\cal B}(M)\rightarrow {\cal B}(M/\theta )$ is well defined.\label{boolquo}\end{remark}

Recall from \cite{cblp} that, by definition, $\theta $ has the {\em Boolean Lifting Property (BLP)} iff ${\cal B}(M)/\theta ={\cal B}(M/\theta )$, that is iff the map above is surjective.

\begin{remark} If $0/\theta =\{0\}$ and $1/\theta =\{1\}$, then, clearly, for any $e,f\in M$: $e$ is a complement of $f$ iff $e/\theta $ is a complement of $f/\theta $, thus $e\in {\cal B}(M)$ iff $e/\theta \in {\cal B}(M/\theta )$, hence ${\cal B}(M/\theta )={\cal B}(M)/\theta $ (that is $\theta $ has the BLP).\label{0n1sgl}\end{remark}

\begin{remark} If $e=\max (e/\theta )$ for all $e\in {\cal B}(M)$, then, by Remark \ref{emaxcls}, for all $e,f\in {\cal B}(M)$, we have: $e/\theta =f/\theta $ iff $e/\theta \leq f/\theta $ and $f/\theta \leq e/\theta $ iff $e\leq f$ and $f\leq e$ iff $e=f$, hence the map $p_{\theta }\mid _{{\cal B}(M)}:{\cal B}(M)\rightarrow {\cal B}(M/\theta )$ is injective.\label{booltheta6}\end{remark}

\begin{remark} Clearly, if ${\cal B}(M)$ is a sublattice, respectively a Boolean sublattice of $M$, then ${\cal B}(M)/\theta $ is a sublattice, respectively a Boolean sublattice of $M/\theta $.

Since $p_{\theta }:M\rightarrow M/\theta $ is a bounded lattice morphism, it follows that, if ${\cal B}(M)$ and ${\cal B}(M/\theta )$ are sublattices, thus bounded sublattices, of $M$ and $M/\theta $, respectively, then $p_{\theta }\mid _{{\cal B}(M)}:{\cal B}(M)\rightarrow {\cal B}(M/\theta )$ is a bounded lattice morphism, hence, if ${\cal B}(M)$ and ${\cal B}(M/\theta )$ are Boolean sublattices of $M$ and $M/\theta $, respectively, then $p_{\theta }\mid _{{\cal B}(M)}:{\cal B}(M)\rightarrow {\cal B}(M/\theta )$ is a Boolean morphism, which is surjective iff ${\cal B}(M)/\theta ={\cal B}(M/\theta )$ and is injective iff $0/\theta \cap {\cal B}(M)=(p_{\theta }\mid _{{\cal B}(M)})^{-1}(\{0/\theta \})=\{0\}$ iff $1/\theta \cap {\cal B}(M)=(p_{\theta }\mid _{{\cal B}(M)})^{-1}(\{1/\theta \})=\{1\}$.

Therefore, if ${\cal B}(M)$ is a Boolean sublattice of $M$ and ${\cal B}(M)/\theta ={\cal B}(M/\theta )$, then ${\cal B}(M/\theta )$ is a Boolean sublattice of $M/\theta $ and $p_{\theta }\mid _{{\cal B}(M)}:{\cal B}(M)\rightarrow {\cal B}(M/\theta )$ is a surjective Boolean morphism.\label{booltheta23}\end{remark}

\begin{remark} If the map $p_{\theta }\mid _{{\cal B}(M)}:{\cal B}(M)\rightarrow {\cal B}(M/\theta )$ is injective and ${\cal B}(M/\theta )$ is a Boolean sublattice of $M/\theta $, then ${\cal B}(M)$ is a Boolean sublattice of $M$ and $p_{\theta }\mid _{{\cal B}(M)}:{\cal B}(M)\rightarrow {\cal B}(M/\theta )$ is a injective Boolean morphism.

Indeed, if this restriction of the bounded lattice morphism $p_{\theta }:M\rightarrow M/\theta $ is injective and its codomain ${\cal B}(M/\theta )$ is a distributive sublattice of $M/\theta $, then its domain ${\cal B}(M)$ is a distributive and thus a Boolean sublattice of $M$ and hence $p_{\theta }\mid _{{\cal B}(M)}:{\cal B}(M)\rightarrow {\cal B}(M/\theta )$ is a Boolean embedding.\label{booltheta4}\end{remark}

\begin{lemma} Let $M$ be a bounded lattice, $\theta \in {\rm Con}(M)$ such that $0/\theta =\{0\}$ and $e\in M$.\begin{enumerate}
\item\label{bmaxth2justnot1} If $(e]_M\in {\cal A}nn(M)$ or $e\in {\cal B}(M)$ is the unique complement of an $f\in {\cal B}(M)$, then $e=\max (e/\theta )$.
\item\label{justnoticed2} If $\{(g]_M\ |\ g\in {\cal B}(M)\}\subseteq {\cal A}nn(M)$ or $M$ is uniquely complemented, in particular if ${\cal B}(M)$ is a Boolean sublattice of $M$, then $g=\max (g/\theta )$ for all $g\in {\cal B}(M)$.
\item\label{bmaxth13} If $e=\max (e/\theta )$, in particular if $(e]_M\in {\cal A}nn(M)$ or $e\in {\cal B}(M)$ is the unique complement of an $f\in {\cal B}(M)$, then, for all $U\subseteq M$: $(e]_M/\theta ={\rm Ann}_M(U)/\theta $ iff $(e]_M={\rm Ann}_M(U)$.
\item\label{maxclsannjustnot3} If $M/\theta $ is distributive, $e\in {\cal B}(M)$ and $f\in {\cal B}(M)$ is a complement of $e$, then: $(e]_M\in {\cal A}nn(M)$ iff $(e]_M\in {\rm PAnn}(M)$ iff $(e]_M={\rm Ann}_M(f)$ iff $e=\max (e/\theta )$.
\item\label{cpctata} Assume that ${\cal B}(M/\theta )$ is a Boolean sublattice of $M/\theta $. If $g=\max (g/\theta )$ for all $g\in {\cal B}(M)$, in particular if $\{(g]_M\ |\ g\in {\cal B}(M)\}\subseteq {\cal A}nn(M)$ or $M$ is uniquely complemented, then ${\cal B}(M)$ is a Boolean sublattice of $M$ and $p_{\theta }\mid _{{\cal B}(M)}:{\cal B}(M)\rightarrow 
{\cal B}(M/\theta )$ is a Boolean embedding.
\item\label{justnot45} If $M/\theta $ is distributive and $M$ is uniquely complemented, then ${\cal B}(M)$ is a Boolean sublattice of $M$, $p_{\theta }\mid _{{\cal B}(M)}:{\cal B}(M)\rightarrow {\cal B}(M/\theta )$ is a Boolean embedding and, for all $g\in {\cal B}(M)$, $g=\max (g/\theta )$ and $(g]_M={\rm Ann}_M(\neg \, g)$.\end{enumerate}\label{compactata}\end{lemma}

\begin{proof} (\ref{bmaxth2justnot1}) If $(e]_M={\rm Ann}_M(U)$ for some $U\subseteq M$, then, by Lemma \ref{anntheta}, (\ref{anntheta0}), $e\in e/\theta \subseteq {\rm Ann}_M(U)=(e]_M$, thus $e=\max (e/\theta )$.

Now assume that $e\in {\cal B}(M)$ is the unique complement of an $f\in {\cal B}(M)$, and assume by absurdum that there exists an $x\in e/\theta $ such that $x\nleq e$. Then, if we denote by $a=x\vee e$, it follows that $a>e$ and thus $a\vee f=1$. But $a/\theta =x/\theta \vee e/\theta =e/\theta \vee e/\theta =e/\theta $, so $(a\wedge \neg \, e)/\theta =a/\theta \wedge \neg \, e/\theta =(e\wedge \neg \, e)/\theta =0/\theta \wedge \neg \, e/\theta =0/\theta =\{0\}$, hence $a\wedge \neg \, e=0$. Since $a\neq e$, we have a contradiction to the uniqueness of the complement of $f$ in $M$. Therefore $e=\max (e/\theta )$.

\noindent (\ref{justnoticed2}) By (\ref{bmaxth2justnot1}).

\noindent (\ref{bmaxth13}) Trivially, if $(e]_M={\rm Ann}_M(U)$, then $(e]_M/\theta ={\rm Ann}_M(U)/\theta $.

If $e=\max (e/\theta )$ and $(e]_M/\theta ={\rm Ann}_M(U)/\theta $, that is $(e]_M/\theta \subseteq {\rm Ann}_M(U)/\theta $ and ${\rm Ann}_M(U)/\theta \subseteq (e]_M/\theta $, then $(e]_M\subseteq {\rm Ann}_M(U)$ by Lemma \ref{anntheta}, (\ref{anntheta2}), and ${\rm Ann}_M(U)\subseteq (e]_M$ by Remark \ref{emaxcls}, so the converse of the implication above also holds.

We get the particular cases from (\ref{bmaxth2justnot1}).

\noindent (\ref{maxclsannjustnot3}) By (\ref{bmaxth2justnot1}), if $(e]_M\in {\cal A}nn(M)$, then $e=\max (e/\theta )$.

Of course, $(e]_M={\rm Ann}_M(f)$ implies $(e]_M\in {\rm PAnn}(M)$, which in turn implies $(e]_M\in {\cal A}nn(M)$.

Now assume that $e=\max (e/\theta )$. Since $e$ is a complement of $f$ in $M$ and $M/\theta $ is distributive, it follows that $e/\theta $ is the unique complement of $f/\theta $ in $M/\theta $. We have $e\in {\rm Ann}_M(f)$, thus $(e]_M\subseteq {\rm Ann}_M(f)$. Assume by absurdum that there exists an $x\in {\rm Ann}_M(f)$ such that $x\nleq e$, and let $a=x\vee e$. Then $a>e$, thus $a\vee f=1$, so $a/\theta \vee f/\theta =1/\theta $, and $a/\theta \neq e/\theta $ since $e=\max (e/\theta )$. Since $M/\theta $ is distributive, we have: $a/\theta \wedge f/\theta =(x/\theta \vee e/\theta )\wedge f/\theta =(x/\theta \wedge f/\theta )\vee (e/\theta \wedge f/\theta )=(x\wedge f)/\theta \vee (e\wedge f)/\theta =0/\theta \vee 0/\theta =0/\theta $, which gives us a contradiction to the uniqueness of the complement of $f/\theta $. Therefore $(e]_M={\rm Ann}_M(f)$.

\noindent (\ref{cpctata}) By Remarks \ref{booltheta6} and \ref{booltheta4}, with the particular cases given by (\ref{justnoticed2}).

\noindent (\ref{justnot45}) By (\ref{justnoticed2}), (\ref{cpctata}) and (\ref{maxclsannjustnot3}).\end{proof}

\begin{proposition} Let $M$ be a bounded lattice. Then:\begin{enumerate}
\item\label{boolidtheta1} $\{(e]_M\ |\ e\in {\cal B}(M)\}\subseteq {\cal B}({\rm Id}(M))$;
\item\label{boolidtheta2} let $\kappa $ be a nonzero cardinality;

if, for all $e,f\in {\cal B}(M)$ such that $e$ is a complement of $f$ in $M$, we have $(e]_M={\rm Ann}_M(f)$, in particular if, for some $\theta \in {\rm Con}(M)$ such that $0/\theta =\{0\}$ and $M/\theta $ is distributive, we have $e=\max (e/\theta )$ for all $e\in {\cal B}(M)$, or, equivalently, $\{(e]_M\ |\ e\in {\cal B}(M)\}\subseteq {\cal A}nn(M)$, in particular if $M$ is uniquely complemented and has a $\theta \in {\rm Con}(M)$ with $0/\theta =\{0\}$ such that $M/\theta $ is distributive, then:\begin{itemize}
\item ${\cal B}({\rm Id}(M))=\{(e]_M\ |\ e\in {\cal B}(M)\}\subseteq {\rm PAnn}(M)\subseteq {\cal A}nn_{\kappa }(M)\subseteq {\cal A}nn(M)$;
\item $M$ is a Stone lattice iff ${\rm PAnn}(M)={\cal B}({\rm Id}(M))$;
\item $M$ is a $\kappa $--Stone lattice iff ${\cal A}nn_{\kappa }(M)={\cal B}({\rm Id}(M))$;
\item $M$ is a strongly Stone lattice iff ${\cal A}nn(M)={\cal B}({\rm Id}(M))$.\end{itemize}
\end{enumerate}\label{boolidtheta}\end{proposition}

\begin{proof} (\ref{boolidtheta1}) For all $e\in {\cal B}(M)$, if $f$ is a complement of $e$ in $M$, then $(e]_M\vee (f]_M=(e\vee f]_M=(1]_M=M$ and $(e]_M\cap (f]_M=(e\wedge f]_M=(0]_M=\{0\}$, so $(e]_M\in {\cal B}({\rm Id}(M))$.

\noindent (\ref{boolidtheta2}) Let $I\in {\cal B}({\rm Id}(M))$, so that $I\cap J=\{0\}$ and $I\vee J=M=(1]_M$ for some $J\in {\rm Id}(M)$, hence $e\vee f=1$ for some $e\in I$ and $f\in J$, thus $e\wedge f\in I\cap J=\{0\}$, so $e\wedge f=0$, hence $e,f\in {\cal B}(M)$ and $f$ is a complement of $e$, thus $(e]_M={\rm Ann}_M(f)$ by the hypothesis. Since $e\in I$, we have $(e]_M\subseteq I$. For all $x\in I$ and all $y\in J$, we have $x\wedge y\in I\cap J=\{0\}$, so $x\wedge y=0$, hence $I\subseteq {\rm Ann}_M(J)\subseteq {\rm Ann}_M(f)=(e]_M$. Therefore $I=(e]_M={\rm Ann}_M(f)\in {\rm PAnn}(M)$. Hence the converse of the inclusion in (\ref{boolidtheta1}), thus ${\cal B}({\rm Id}(M))=\{(g]_M\ |\ g\in {\cal B}(M)\}\subseteq {\rm PAnn}(M)\subseteq {\cal A}nn(M)$, hence the last two statements.

We get the particular cases from Lemma \ref{compactata}, (\ref{justnoticed2}) and (\ref{maxclsannjustnot3}).\end{proof}

\begin{lemma} Let $M$ be a bounded lattice and $\theta \in {\rm Con}(M)$ such that $0/\theta =\{0\}$ and $1/\theta =\{1\}$.\begin{enumerate}
\item\label{booltheta5} If ${\cal B}(M)$ is a Boolean sublattice of $M$, then ${\cal B}(M/\theta )$ is a Boolean sublattice of $M/\theta $ and $p_{\theta }\mid _{{\cal B}(M)}:{\cal B}(M)\rightarrow {\cal B}(M/\theta )$ is a Boolean isomorphism.
\item\label{booltheta7} If $e=\max (e/\theta )$ for all $e\in {\cal B}(M)$, in particular if $\{(e]_M\ |\ e\in {\cal B}(M)\}\subseteq {\cal A}nn(M)$ or $M$ is uniquely complemented, then: ${\cal B}(M)$ is a Boolean sublattice of $M$ iff ${\cal B}(M/\theta )$ is a Boolean sublattice of $M/\theta $, and, if so, then $p_{\theta }\mid _{{\cal B}(M)}:{\cal B}(M)\rightarrow {\cal B}(M/\theta )$ is a Boolean isomorphism.\end{enumerate}\label{booltheta}\end{lemma}

\begin{proof} (\ref{booltheta5}) By Remark \ref{0n1sgl}, ${\cal B}(M)/\theta ={\cal B}(M/\theta )$, thus, according to Remark \ref{booltheta23}, ${\cal B}(M/\theta )$ is a Boolean sublattice of $M/\theta $ and $p_{\theta }\mid _{{\cal B}(M)}:{\cal B}(M)\rightarrow {\cal B}(M/\theta )$ is a surjective Boolean morphism. Since $0/\theta \cap {\cal B}(M)=\{0\}\cap {\cal B}(M)=\{0\}$, Remark \ref{booltheta23} ensures us that this Boolean morphism is also injective, so it is a Boolean isomorphism.

\noindent (\ref{booltheta7}) By (\ref{booltheta5}) and Remarks \ref{booltheta4} and \ref{booltheta6}, with Lemma \ref{compactata}, (\ref{justnoticed2}), for the particular cases.\end{proof}

\begin{proposition} If $M$ is a bounded lattice and $\theta \in {\rm Con}(M)$ such that $0/\theta =\{0\}$, then, for any cardinality $\kappa $:\begin{enumerate}
\item\label{1theta1} $(1)_{\kappa ,M}$ implies $(1)_{\kappa ,M/\theta }$, thus: if $M$ is Stone, respectively $\kappa $--Stone, respectively strongly Stone, then $M/\theta $ is Stone, respectively $\kappa $--Stone, respectively strongly Stone;
\item\label{1theta2} if $1/\theta =\{1\}$, then: if $e=\max (e/\theta )$ for all $e\in {\cal B}(M)$, in particular if $(e]_M\in {\cal A}nn(M)$ for all $e\in {\cal B}(M)$, in particular if $M$ is uniquely complemented, in particular if ${\cal B}(M)$ is a Boolean sublattice of $M$, then $(1)_{\kappa ,M}$ is equivalent to $(1)_{\kappa ,M/\theta }$, thus: $M$ is Stone, respectively $\kappa $--Stone, respectively strongly Stone iff $M/\theta $ is Stone, respectively $\kappa $--Stone, respectively strongly Stone.\end{enumerate}\label{1theta}\end{proposition}

\begin{proof} (\ref{1theta1}) Let $V\subseteq M/\theta $ such that $|V|\leq \kappa $, so that $V=U/\theta $ for some $U\subseteq M$ with $|U|=|V|\leq \kappa $. If $(1)_{\kappa ,M}$ is fulfilled, then there exists an $e\in {\cal B}(M)$ such that ${\rm Ann}_M(U)=(e]_M$, so that $e/\theta \in {\cal B}(M)/\theta \subseteq {\cal B}(M/\theta )$, and $(e/\theta ]_{M/\theta }=(e]_M/\theta ={\rm Ann}_M(U)/\theta ={\rm Ann}_{M/\theta }(U/\theta )={\rm Ann}_{M/\theta }(V)$ by Lemma \ref{anntheta}, (\ref{anntheta1}), hence $(1)_{\kappa ,M/\theta }$ is fulfilled.

\noindent (\ref{1theta2}) By (\ref{1theta1}), we have the direct implication. For the converse, let $U\subseteq M$ such that $|U|\leq \kappa $, so that $|U/\theta |\leq |U|\leq \kappa $, and thus, if $(1)_{\kappa ,M/\theta }$ is fulfilled, then, for some $e\in M$ such that $e/\theta \in {\cal B}(M/\theta )$, so that $e\in {\cal B}(M)$ by Remark \ref{0n1sgl}, and thus $e=\max (e/\theta )$ by the hypothesis, we have $(e]_M/\theta =(e/\theta ]_{M/\theta }={\rm Ann}_{M/\theta }(U/\theta )={\rm Ann}_M(U)/\theta $, hence $(e]_M={\rm Ann}_M(U)$ by Lemma \ref{compactata}, (\ref{bmaxth13}). For the particular cases, see Lemma \ref{compactata}, (\ref{bmaxth2justnot1}) and (\ref{justnoticed2}).\end{proof}

\begin{proposition} If $M$ is a bounded lattice and $\theta \in {\rm Con}(M)$ such that $0/\theta =\{0\}$ and $1/\theta =\{1\}$, then, for any cardinality $\kappa $:\begin{enumerate}
\item\label{2theta1} $(2)_{\kappa ,M}$ implies $(2)_{\kappa ,M/\theta }$;
\item\label{2theta2} if $e=\max (e/\theta )$ for all $e\in {\cal B}(M)$, in particular if $(e]_M\in {\cal A}nn(M)$ for all $e\in {\cal B}(M)$, in particular if $M$ is uniquely complemented, in particular if ${\cal B}(M)$ is a Boolean sublattice of $M$, then $(2)_{\kappa ,M}$ is equivalent to $(2)_{\kappa ,M/\theta }$.\end{enumerate}\label{2theta}\end{proposition}

\begin{proof} By Proposition \ref{1theta} and Lemma \ref{booltheta}, (\ref{booltheta7}), with Lemma \ref{compactata}, (\ref{bmaxth2justnot1}) and (\ref{justnoticed2}), for the particular cases in (\ref{2theta2}).\end{proof}

\begin{proposition} If $M$ is a bounded lattice and $\theta \in {\rm Con}(M)$ such that $0/\theta =\{0\}$, then, for any cardinality $\kappa $, $(3)_{\kappa ,M}$ is equivalent to $(3)_{\kappa ,M/\theta }$.\label{3theta}\end{proposition}

\begin{proof} By Lemma \ref{anntheta}, (\ref{anntheta1}), Lemma \ref{panntheta}, (\ref{panntheta4}), and the fact that the following diagram is commutative:

\begin{center}\hspace*{-35pt}\begin{picture}(400,40)(0,0)
\put(50,35){$M$}
\put(43,5){$M/\theta $}
\put(208,35){${\rm P2Ann}(M)$}
\put(205,5){${\rm P2Ann}(M/\theta )$}
\put(57,24){$x\mapsto x/\theta $}
\put(55,33){\vector(0,-1){19}}
\put(228,26){$\forall \, x\in M$, ${\rm Ann}_M({\rm Ann}_M(x))\mapsto $}
\put(275,15){${\rm Ann}_M({\rm Ann}_M(x))/\theta =$}
\put(275,4){${\rm Ann}_{M/\theta }({\rm Ann}_{M/\theta }(x/\theta ))$}
\put(225,33){\vector(0,-1){19}}
\put(80,41){$x\mapsto {\rm Ann}_M({\rm Ann}_M(x))$}
\put(61,38){\vector(1,0){145}}
\put(65,12){$x/\theta \!\mapsto \!{\rm Ann}_{M/\theta }({\rm Ann}_{M/\theta }(x/\theta ))$}
\put(65,8){\vector(1,0){138}}\end{picture}\end{center}\vspace*{-15pt}\end{proof}

\begin{proposition} If $M$ is a bounded lattice and $\theta \in {\rm Con}(M)$ such that $0/\theta =\{0\}$, then, for any cardinality $\kappa $, $(4)_{\kappa ,M}$ is equivalent to $(4)_{\kappa ,M/\theta }$.\label{4theta}\end{proposition}

\begin{proof} By Lemma \ref{anntheta}, (\ref{anntheta1}), Lemma \ref{panntheta}, (\ref{panntheta4}), and the surjectivity of the map $x\mapsto x/\theta $ from $M$ to $M/\theta $.\end{proof}

\begin{proposition} For any bounded lattice $M$, any $\theta \in {\rm Con}(M)$ and any cardinality $\kappa $:\begin{enumerate}
\item\label{5theta1} $(5)_{\kappa ,M}$ implies $(5)_{\kappa ,M/\theta }$;
\item\label{5theta2} if $0/\theta =\{0\}$ and $1/\theta =\{1\}$, then $(5)_{\kappa ,M}$ is equivalent to $(5)_{\kappa ,M/\theta }$.\end{enumerate}\label{5theta}\end{proposition}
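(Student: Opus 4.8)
The plan is to move condition $(5)$ across $p_{\theta }\colon M\to M/\theta $ by the two devices used in Propositions \ref{1theta}--\ref{4theta}: the induced complete lattice morphism $\Phi \colon {\rm Id}(M)\to {\rm Id}(M/\theta )$, $\Phi (I)=I/\theta $, which satisfies $\Phi ((X]_M)=(X/\theta ]_{M/\theta }$ for all $X\subseteq M$ and $\Phi (M)=M/\theta $; and Lemma \ref{anntheta}, (\ref{anntheta1}), which, when $0/\theta =\{0\}$, identifies $p_{\theta }$--images of annihilators with annihilators in $M/\theta $, namely ${\rm Ann}_M(U)/\theta ={\rm Ann}_{M/\theta }(U/\theta )$ and ${\rm Ann}_M({\rm Ann}_M(U))/\theta ={\rm Ann}_{M/\theta }({\rm Ann}_{M/\theta }(U/\theta ))$ for all $U\subseteq M$. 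The proof is then just a translation of the defining identity of $(5)$ back and forth along $\Phi $.

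For (\ref{5theta1}): given $V\subseteq M/\theta $ with $|V|\leq \kappa $, choose $U\subseteq M$ with $U/\theta =V$ and $|U|=|V|$. Applying $\Phi $ to the identity $({\rm Ann}_M(U)\cup {\rm Ann}_M({\rm Ann}_M(U))]_M=M$ granted by $(5)_{\kappa ,M}$, and using that $\Phi $ commutes with generated ideals and unions, yields $({\rm Ann}_M(U)/\theta \cup {\rm Ann}_M({\rm Ann}_M(U))/\theta ]_{M/\theta }=M/\theta $. Now Lemma \ref{anntheta}, (\ref{anntheta1}), rewrites the left-hand side as $({\rm Ann}_{M/\theta }(V)\cup {\rm Ann}_{M/\theta }({\rm Ann}_{M/\theta }(V))]_{M/\theta }$, so this ideal equals $M/\theta $, i.e. $(5)_{\kappa ,M/\theta }$ holds; this mirrors Proposition \ref{1theta}, (\ref{1theta1}). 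It is exactly here that $0/\theta =\{0\}$ does the work: without it Remark \ref{inclann} gives only ${\rm Ann}_M(U)/\theta \subseteq {\rm Ann}_{M/\theta }(V)$, with no control over the double annihilator, because $p_{\theta }$ may create new annihilators in the quotient; recognising the ideal produced by $\Phi $ as the one in $(5)_{\kappa ,M/\theta }$ is the step I expect to be the main obstacle.

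For (\ref{5theta2}): the forward implication is (\ref{5theta1}). Conversely, assume $(5)_{\kappa ,M/\theta }$ and let $U\subseteq M$ with $|U|\leq \kappa $, so $|U/\theta |\leq \kappa $ and $({\rm Ann}_{M/\theta }(U/\theta )\cup {\rm Ann}_{M/\theta }({\rm Ann}_{M/\theta }(U/\theta ))]_{M/\theta }=M/\theta $. By Lemma \ref{anntheta}, (\ref{anntheta1}), the left-hand side equals $({\rm Ann}_M(U)/\theta \cup {\rm Ann}_M({\rm Ann}_M(U))/\theta ]_{M/\theta }=\Phi \bigl(({\rm Ann}_M(U)\cup {\rm Ann}_M({\rm Ann}_M(U))]_M\bigr)$; writing $J=({\rm Ann}_M(U)\cup {\rm Ann}_M({\rm Ann}_M(U))]_M\in {\rm Id}(M)$, this says $J/\theta =M/\theta $, hence $1/\theta \in J/\theta $, i.e. $y\equiv 1\pmod{\theta }$ for some $y\in J$; as $1/\theta =\{1\}$ this forces $1=y\in J$, whence $J=M$ because $J$ is an ideal, that is, $(5)_{\kappa ,M}$ holds.

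Thus, granted $\Phi $ and Lemma \ref{anntheta}, the mechanics are routine; the two delicate points are (i) in the implication $(5)_{\kappa ,M}\Rightarrow (5)_{\kappa ,M/\theta }$, ensuring that $p_{\theta }$ produces no new annihilators downstairs, which is where $0/\theta =\{0\}$ enters, and (ii) in the converse, transporting the equality with $M$ back through $p_{\theta }$, which uses $1/\theta =\{1\}$ so that the top element of $M$ has a singleton $\theta $--class.
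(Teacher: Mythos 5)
Your treatment of part (\ref{5theta2}) is correct and essentially coincides with the paper's: the paper argues elementwise (writing $1/\theta $ as a finite join of classes, pulling the representatives back into ${\rm Ann}_M(U)$ and ${\rm Ann}_M({\rm Ann}_M(U))$ via Lemma \ref{anntheta}, and then using $1/\theta =\{1\}$ to get $1=a_1\vee \ldots \vee b_k$), while you phrase the same computation through the induced ideal map ${\rm Id}(M)\rightarrow {\rm Id}(M/\theta )$ and Lemma \ref{anntheta}, (\ref{anntheta1}); the substance is identical.

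For part (\ref{5theta1}) there is a mismatch with the statement: the proposition asserts $(5)_{\kappa ,M}\Rightarrow (5)_{\kappa ,M/\theta }$ with no hypothesis on $\theta $, whereas your proof, as you yourself stress, invokes Lemma \ref{anntheta}, (\ref{anntheta1}), and hence $0/\theta =\{0\}$; so, as written, you have only proved (\ref{5theta1}) under that extra hypothesis. Your caution is, however, mathematically justified, and the gap is in fact in the printed statement and in the paper's own proof, which silently uses the containment ${\rm Ann}_M({\rm Ann}_M(U))/\theta \subseteq {\rm Ann}_{M/\theta }({\rm Ann}_{M/\theta }(U/\theta ))$ for the elements $b_1,\ldots ,b_k$. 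Remark \ref{inclann} only gives ${\rm Ann}_M({\rm Ann}_M(U))/\theta \subseteq {\rm Ann}_{M/\theta }({\rm Ann}_M(U)/\theta )$, and since ${\rm Ann}_M(U)/\theta \subseteq {\rm Ann}_{M/\theta }(U/\theta )$ and annihilation is antitone, this is an upper bound on the wrong side; the containment can fail when $0/\theta \neq \{0\}$, and with it the implication itself. Concretely, let $M=\{0,z,a,b,c,1\}$ be the (distributive) lattice with $0<z$, $z<a$, $z<b$, $a\wedge b=z$, $a\vee b=c$, $c<1$, and let $\theta $ be the congruence whose only non--singleton class is $\{0,z\}$. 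Every nonzero element of $M$ lies above $z$, so ${\rm Ann}_M(U)=\{0\}$ whenever $U\nsubseteq \{0\}$ and ${\rm Ann}_M(\{0\})=M$, hence $(5)_{\kappa ,M}$ holds for every $\kappa $; but in $M/\theta $, taking $V=\{a/\theta \}$ gives ${\rm Ann}_{M/\theta }(V)=\{0/\theta ,b/\theta \}$, ${\rm Ann}_{M/\theta }({\rm Ann}_{M/\theta }(V))=\{0/\theta ,a/\theta \}$, and the ideal these generate is $(c/\theta ]_{M/\theta }\neq M/\theta $, so $(5)_{1,M/\theta }$ fails. Thus the correct form of (\ref{5theta1}) is precisely the one you proved, with $0/\theta =\{0\}$ assumed (this also affects the places where the paper later uses (\ref{5theta1}) without that hypothesis, e.g. Proposition \ref{echiv(v)}, (\ref{echiv(v)1})); under that hypothesis your argument is sound, but it does not establish the statement as printed, and you should flag the discrepancy rather than leave the hypothesis implicit.
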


\begin{proof} (\ref{5theta1}) If $(5)_{\kappa ,M}$ if fulfilled and $V\subseteq M/\theta $ with $|V|\leq \kappa $, then $V=U/\theta $ for some $U\subseteq M$ with $|U|\leq \kappa $, so that $({\rm Ann}_M(U)\cup {\rm Ann}_M({\rm Ann}_M(U))]_M=M$, thus $1=a_1\vee \ldots \vee a_n\vee b_1\vee \ldots \vee b_k$ for some $n,k\in \N ^*$, $a_1,\ldots ,a_n\in {\rm Ann}_M(U)$ and $b_1,\ldots ,b_k\in {\rm Ann}_M({\rm Ann}_M(U))$. Then $a_1/\theta ,\ldots ,a_n/\theta \in {\rm Ann}_{M/\theta }(U/\theta )={\rm Ann}_{M/\theta }(V)$ and $b_1/\theta ,\ldots ,b_k/\theta \in {\rm Ann}_{M/\theta }(\linebreak {\rm Ann}_{M/\theta }(U/\theta ))={\rm Ann}_{M/\theta }({\rm Ann}_{M/\theta }(V)$, therefore $1/\theta =a_1/\theta \vee \ldots \vee a_n/\theta \vee b_1/\theta \vee \ldots \vee b_k/\theta \in ({\rm Ann}_{M/\theta }(V)\cup {\rm Ann}_{M/\theta }({\rm Ann}_{M/\theta }(V))]_{M/\theta }$, hence $({\rm Ann}_{M/\theta }(V)\cup {\rm Ann}_{M/\theta }(\linebreak {\rm Ann}_{M/\theta }(V))]_{M/\theta }=M/\theta $.

\noindent (\ref{5theta2}) Assume that $0/\theta =\{0\}$ and $1/\theta =\{1\}$. If $(5)_{\kappa ,M/\theta }$ is fulfilled and $U\subseteq M$ with $|U|\leq \kappa $, then $|U/\theta |\leq |U|\leq \kappa $, so that $({\rm Ann}_{M/\theta }(U/\theta )\cup {\rm Ann}_{M/\theta }({\rm Ann}_{M/\theta }(U/\theta ))]_{M/\theta }=M/\theta $, thus $1/\theta =a_1/\theta \vee \ldots \vee a_n/\theta \vee b_1/\theta \vee \ldots \vee b_k/\theta $ for some $n,k\in \N ^*$ and $a_1,\ldots ,a_n,b_1,\ldots ,b_k\in M$ such that $a_1/\theta ,\ldots ,a_n/\theta \in {\rm Ann}_{M/\theta }(U/\theta )$ and $b_1/\theta ,\ldots ,b_k/\theta \in {\rm Ann}_{M/\theta }({\rm Ann}_{M/\theta }(U/\theta ))$. But then $a_1,\ldots ,a_n\in {\rm Ann}_M(U)$ and $b_1,\ldots ,b_k\in {\rm Ann}_M({\rm Ann}_M(U))$ by Lemma \ref{anntheta}, (\ref{anntheta0}), and $\{1\}=1/\theta =(a_1\vee \ldots \vee a_n\vee b_1\vee \ldots \vee b_k)/\theta $, thus $1=a_1\vee \ldots \vee a_n\vee b_1\vee \ldots \vee b_k\in ({\rm Ann}_M(U)\cup {\rm Ann}_M({\rm Ann}_M(U))]_M$, hence $({\rm Ann}_M(U)\cup {\rm Ann}_M({\rm Ann}_M(U))]_M=M$. We have the converse from (\ref{5theta1}), so the equivalence holds.\end{proof}

\begin{theorem} Let $M$ be a bounded lattice, $\theta \in {\rm Con}(M)$ such that $0/\theta =\{0\}$ and $1/\theta =\{1\}$ and $m$ be a nonzero cardinality. If $e=\max (e/\theta )$ for all $e\in {\cal B}(M)$, in particular if $(e]_M\in {\cal A}nn(M)$ for all $e\in {\cal B}(M)$, in particular if $M$ is uniquely complemented, in particular if ${\cal B}(M)$ is a Boolean sublattice of $M$, then:\begin{enumerate}
\item\label{transferdavey1} if $M/\theta $ is distributive, then, for any nonzero cardinality $\kappa $, conditions $(1)_{\kappa ,M}$, $(2)_{\kappa ,M}$, $(3)_{\kappa ,M}$, $(4)_{\kappa ,M}$ and $(5)_{\kappa ,M}$ are equivalent;
\item\label{transferdavey0} if $M/\theta $ is distributive and ${\cal A}nn_m(M)={\rm PAnn}(M)$, in particular if $M/\theta $ is closed w.r.t. the joins of all families of elements of cardinality at most $m$ and has the meet distributive w.r.t. the joins of families of cardinalities at most $m$, then, for any $h,i\in \overline{1,5}$ and any nonzero cardinality $\kappa \leq m$, conditions $(h)_{\kappa ,M}$ and $(i)_{<\infty ,M}$ are equivalent;
\item\label{transferdavey2} if $M/\theta $ is distributive and ${\cal A}nn(M)={\rm PAnn}(M)$, in particular if $M/\theta $ is a frame, then, for any $h,i,j\in \overline{1,5}$ and any nonzero cardinality $\kappa $, conditions $(iv)_M$, $(h)_{\kappa ,M}$, $(i)_{<\infty ,M}$ and $(j)_M$ are equivalent.\end{enumerate}\label{transferdavey}\end{theorem}

\begin{proof} (\ref{transferdavey1}) By Theorem \ref{davey}, (\ref{davey1}), Proposition \ref{1theta}, (\ref{1theta2}), Proposition \ref{2theta}, (\ref{2theta2}), Propositions \ref{3theta} and \ref{4theta} and Proposition \ref{5theta},  (\ref{5theta2}).

\noindent (\ref{transferdavey0}) By Theorem \ref{davey}, (\ref{davey0}), Proposition \ref{1theta}, (\ref{1theta2}), Proposition \ref{2theta}, (\ref{2theta2}), Propositions \ref{3theta} and \ref{4theta} and Proposition \ref{5theta},  (\ref{5theta2}).

\noindent (\ref{transferdavey2}) By Theorem \ref{davey}, (\ref{davey2}), Proposition \ref{1theta}, (\ref{1theta2}), Proposition \ref{2theta}, (\ref{2theta2}), Propositions \ref{3theta} and \ref{4theta} and Proposition \ref{5theta},  (\ref{5theta2}).\end{proof}

Let us also note:

\begin{proposition} Let $M$ be a bounded lattice, $\theta \in {\rm Con}(M)$ such that $0/\theta =\{0\}$ and $m$ be a nonzero cardinality.\begin{itemize}
\item If $M/\theta $ is distributive, then, for any nonzero cardinality $\kappa $, condition $(3)_{\kappa ,M}$ is equivalent to $(4)_{\kappa ,M}$.
\item If $1/\theta =\{1\}$ and $M/\theta $ is distributive, then, for any nonzero cardinality $\kappa $, conditions $(3)_{\kappa ,M}$, $(4)_{\kappa ,M}$ and $(5)_{\kappa ,M}$ are equivalent.
\item If $M/\theta $ is distributive and ${\cal A}nn_m(M/\theta )={\rm PAnn}(M/\theta )$, in particular if $M/\theta $ is closed w.r.t. the joins of all families of elements of cardinality at most $m$ and has the meet distributive w.r.t. such joins, then, for any nonzero cardinalities $\kappa \leq m$ and $\mu \leq m$, condition $(3)_{\kappa ,M}$ is equivalent to $(4)_{\mu ,M}$.
\item If $1/\theta =\{1\}$, then: if $M/\theta $ is distributive and ${\cal A}nn_m(M/\theta )={\rm PAnn}(M/\theta )$, in particular if $M/\theta $ is closed w.r.t. the joins of all families of elements of cardinality at most $\kappa $ and has the meet distributive w.r.t. the joins of families of cardinalities at most $\kappa $, then, for any nonzero cardinalities $\kappa \leq m$, $\lambda \leq m$ and $\mu \leq m$, conditions $(3)_{\kappa ,M}$, $(4)_{\lambda ,M}$ and $(5)_{\mu ,M}$ are equivalent.
\item If $M/\theta $ is distributive and ${\cal A}nn(M/\theta )={\rm PAnn}(M/\theta )$, in particular if $M/\theta $ is a frame, then, for any nonzero cardinalities $\kappa $ and $\mu $, conditions $(3)_{\kappa ,M}$, $(4)_{\mu ,M}$ and $(iv)_M$ are equivalent.
\item If $1/\theta =\{1\}$, then: if $M/\theta $ is distributive and ${\cal A}nn(M/\theta )={\rm PAnn}(M/\theta )$, in particular if $M/\theta $ is a frame, then, for any nonzero cardinalities $\kappa $, $\lambda $ and $\mu $, conditions $(3)_{\kappa ,M}$, $(4)_{\lambda ,M}$, $(iv)_M$ and $(5)_{\mu ,M}$ are equivalent.
\end{itemize}\label{echiv345}\end{proposition}

\begin{proof} By Theorem \ref{davey} and Propositions \ref{3theta}, \ref{4theta} and \ref{5theta}.\end{proof}

Recall from Proposition \ref{quodist} that, if $0/\theta =\{0\}$ and $M/\theta $ is distributive, then ${\cal A}nn(M)\linebreak \subseteq {\rm Id}(M)$, while, if $0/\theta =\{0\}$ and $M/\theta $ is a frame, then ${\cal A}nn(M)={\rm PAnn}(M)\subseteq {\rm Id}(M)$.

If we eliminate the nontrivial implications from Theorem \ref{transferdavey}, (\ref{transferdavey0}) and (\ref{transferdavey2}), along with those that immediately follow from Proposition \ref{quodist}, (\ref{quodist2}) and (\ref{quodist3}), and Remark \ref{mcplt}, then we obtain the following:

\begin{corollary} Let $M$ be a bounded lattice and $\theta \in {\rm Con}(M)$ such that $0/\theta =\{0\}$ and $1/\theta =\{1\}$ and $m$ be a nonzero cardinality. If $e=\max (e/\theta )$ for all $e\in {\cal B}(M)$, in particular if $(e]_M\in {\cal A}nn(M)$ for all $e\in {\cal B}(M)$, in particular if $M$ is uniquely complemented, in particular if ${\cal B}(M)$ is a Boolean sublattice of $M$, then:\begin{enumerate}
\item\label{clearer2} if $M/\theta $ is distributive and ${\cal A}nn_m(M)={\rm PAnn}(M)$, in particular if $M/\theta $ is closed w.r.t. the joins of all families of elements of cardinality at most $m$ and has the meet distributive w.r.t. such joins, then the following are equivalent:\begin{itemize}
\item $M$ is Stone;
\item $M$ is $m$--Stone and ${\cal B}(M)$ is an $m$--complete Boolean sublattice of $M$;
\item ${\rm P2Ann}(M)$ is a Boolean sublattice of ${\rm Id}(M)$ such that $a\mapsto \linebreak {\rm Ann}_M({\rm Ann}_M(a))$ is a lattice morphism from $M$ to ${\rm P2Ann}(M)$;
\item ${\rm P2Ann}(M)$ is an $m$--complete Boolean sublattice of ${\rm Id}(M)$ such that $a\mapsto {\rm Ann}_M({\rm Ann}_M(a))$ is a lattice morphism from $M$ to ${\rm P2Ann}(M)$;
\item for all $a,b\in M$, ${\rm Ann}_M(a\wedge b)={\rm Ann}_M(a)\vee {\rm Ann}_M(b)$ and ${\rm P2Ann}(M)\subseteq {\rm PAnn}(M)$;
\item for all $a\in M$, ${\rm Ann}_M(a)\vee {\rm Ann}_M({\rm Ann}_M(a))=M$;\end{itemize}
\item\label{clearer1} if $M/\theta $ is distributive and ${\cal A}nn(M)={\rm PAnn}(M)$, in particular if $M/\theta $ is a frame, then the following are equivalent:\begin{itemize}
\item $M$ is Stone;
\item $M$ is strongly Stone and ${\cal B}(M)$ is a complete Boolean sublattice of $M$;
\item ${\rm P2Ann}(M)$ is a Boolean sublattice of ${\rm Id}(M)$ such that $a\mapsto \linebreak {\rm Ann}_M({\rm Ann}_M(a))$ is a lattice morphism from $M$ to ${\rm P2Ann}(M)$;
\item ${\rm P2Ann}(M)$ is a complete Boolean sublattice of ${\rm Id}(M)$ such that $a\mapsto \linebreak {\rm Ann}_M({\rm Ann}_M(a))$ is a lattice morphism from $M$ to ${\rm P2Ann}(M)$;
\item for all $a,b\in M$, ${\rm Ann}_M(a\wedge b)={\rm Ann}_M(a)\vee {\rm Ann}_M(b)$;
\item for all $a\in M$, ${\rm Ann}_M(a)\vee {\rm Ann}_M({\rm Ann}_M(a))=M$.\end{itemize}\end{enumerate}\label{clearer}\end{corollary}

\section{A Certain Congruence of a Commutator Lattice}
\label{thecongruence}

Throughout this section, unless mentioned otherwise, $(L,\vee ,\wedge ,[\cdot ,\cdot ],0,1)$ will be a commutator lattice. Some of the results that follow in this paper generalize properties obtained in \cite{retic} for the particular case of congruence lattices endowed with the term--condition commutator, under the condition that this commutator operation is commutative and distributive in both arguments w.r.t. arbitrary joins, in particular for congruence lattices of members of congruence--modular varieties, endowed with the modular commutator.

\begin{remark} Since $[x,y]\leq x\wedge y$ for all $x,y\in L$, we clearly have ${\rm Spec}_L\subseteq {\rm Mi}(L)\setminus \{1\}$. If $L$ has finite length, then ${\rm Mi}(L)\setminus \{1\}={\rm Smi}(L)$, so that ${\rm Spec}_L\subseteq {\rm Smi}(L)$.\end{remark}

\begin{lemma} Let $(L,[\cdot ,\cdot ])$ be a commutator lattice. Then:\begin{enumerate}
\item\label{smimaxspec1} $\{x\in {\rm Smi}(L)\ |\ [x^{+},x^{+}]\nleq x\}\subseteq {\rm Spec}_L$; if $L$ has finite length, then ${\rm Spec}_L=\{x\in {\rm Smi}(L)\ |\ [x^{+},x^{+}]\nleq x\}$.
\item\label{smimaxspec0} ${\rm Spec}_L=\{x\in {\rm Mi}(L)\setminus \{1\}\ |\ (\forall \, a\in L)\, ([a,a]\leq x\Rightarrow a\leq x)\}$;
\item\label{smimaxspec2} if $[1,1]=1$, then ${\rm Max}_L\subseteq {\rm Spec}_L$;
\item\label{smimaxspec3} if $1\in {\rm Cp}(L)$, then, for each $x\in L\setminus \{1\}$, there exists a $p\in {\rm Max}_L$ such that $x\leq p$;
\item\label{smimaxspec4} if $1\in {\rm Cp}(L)$ and $[1,1]=1$, then, for each $x\in L\setminus \{1\}$, there exists a $p\in {\rm Spec}_L$ such that $x\leq p$.\end{enumerate}\label{smimaxspec}\end{lemma}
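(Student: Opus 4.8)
The plan is to prove the five parts of Lemma~\ref{smimaxspec} mostly independently, relying on the definition of prime element, on basic properties of strictly meet--irreducible elements and their successors, on the hypotheses $[1,1]=1$ and $1\in{\rm Cp}(L)$, and on the identity $[x,\bigvee_i y_i]=\bigvee_i[x,y_i]$ together with $[x,y]\le x\wedge y$.

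First I would handle (\ref{smimaxspec1}). For the inclusion, take $x\in{\rm Smi}(L)$ with $[x^+,x^+]\nleq x$ and suppose $[a,b]\le x$ while $a\nleq x$ and $b\nleq x$. Since $x$ is strictly meet--irreducible with unique successor $x^+$, the conditions $a\nleq x$ and $b\nleq x$ force $x^+\le x\vee a$ and $x^+\le x\vee b$ (because $x\vee a>x$ and $x^+$ is the least element strictly above $x$). Then $[x^+,x^+]\le[x\vee a,x\vee b]=[x,x]\vee[x,b]\vee[a,x]\vee[a,b]$ by complete distributivity of the commutator; each of the first three terms is $\le x$ (using $[x,x]\le x$ and $[x,b]\le x$, $[a,x]\le x$) and the last is $\le x$ by assumption, so $[x^+,x^+]\le x$, a contradiction. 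Also $x\ne 1$ since $1\notin{\rm Smi}(L)$. Hence $x\in{\rm Spec}_L$. When $L$ has finite length, ${\rm Smi}(L)={\rm Mi}(L)\setminus\{1\}$, so every prime $x$ lies in ${\rm Smi}(L)$ and has a successor $x^+>x$; then $x^+\nleq x$ forces (by primeness applied to $[x^+,x^+]$, should $[x^+,x^+]\le x$) that $x^+\le x$, a contradiction, giving $[x^+,x^+]\nleq x$ and the reverse inclusion.

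For (\ref{smimaxspec0}): $\subseteq$ is immediate, taking $a=b$ in the definition and recalling ${\rm Spec}_L\subseteq{\rm Mi}(L)\setminus\{1\}$. For $\supseteq$, suppose $x\in{\rm Mi}(L)\setminus\{1\}$ satisfies the one--variable condition and $[a,b]\le x$; then $[a\wedge b,a\wedge b]\le[a,b]\le x$ using monotonicity of the commutator (which follows from complete join--distributivity), so $a\wedge b\le x$, and since $x$ is meet--irreducible, $a\le x$ or $b\le x$. For (\ref{smimaxspec2}): if $p\in{\rm Max}_L$, then $p\in L\setminus\{1\}$ and $p\in{\rm Mi}(L)$ (a maximal element of $L\setminus\{1\}$ is meet--irreducible when $L$ has a $1$); if $[a,a]\le p$ but $a\nleq p$, then $p\vee a=1$ by maximality, so $[1,1]=[p\vee a,p\vee a]=[p,p]\vee[p,a]\vee[a,a]\le p$, contradicting $[1,1]=1\ne p$; apply (\ref{smimaxspec0}). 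For (\ref{smimaxspec3}): this is the standard Zorn's lemma argument — given $x<1$, the set of $y\in L\setminus\{1\}$ with $x\le y$ is nonempty, and any chain in it has join $<1$ because $1$ compact would force $1$ to equal the join of a finite subchain, hence an element of the chain, impossible; a maximal such $y$ is in ${\rm Max}_L$. Finally (\ref{smimaxspec4}) is the conjunction of (\ref{smimaxspec3}) and (\ref{smimaxspec2}).

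The main obstacle is the successor step in (\ref{smimaxspec1}): one must argue carefully that $a\nleq x$ implies $x^+\le x\vee a$, which uses precisely that $x$ is \emph{strictly} meet--irreducible (so $x^+=\bigwedge\{z\mid x<z\}$ is itself $>x$ and is the unique lower cover's dual), and then that the commutator distributes over the binary join on both sides to expand $[x\vee a,x\vee b]$. Everything else is routine once the monotonicity of $[\cdot,\cdot]$ (a consequence of Definition~\ref{defcommlat}) is noted; I would state that monotonicity explicitly at the start of the proof and reuse it throughout.
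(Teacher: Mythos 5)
Your parts (i), (iv) and (v) are correct and essentially the paper's argument (in (i) you even avoid the paper's superfluous intermediate claim that $[x^{+},x^{+}]=x^{+}$; only $[x^{+},x^{+}]\leq x$ versus $[x^{+},x^{+}]\nleq x$ is needed), and your (iii) is a legitimate variant, verifying the one--variable condition for a maximal element and invoking (ii), whereas the paper gets (iii) from (i) via $p^{+}=1$. The problem is your proof of (ii), on which your (iii) also leans.

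In the $\supseteq$ direction of (ii) you argue: $[a,b]\leq x$ gives $[a\wedge b,a\wedge b]\leq x$, hence $a\wedge b\leq x$ by the one--variable hypothesis, ``and since $x$ is meet--irreducible, $a\leq x$ or $b\leq x$.'' That last inference is meet--\emph{primeness}, not meet--irreducibility, and the two are not equivalent here: $L$ is only a complete lattice (commutator lattices of interest are congruence lattices, hence modular but not distributive in general). For instance, in $M_{3}$ an atom $x$ is meet--irreducible, yet for the other two atoms $a,b$ one has $a\wedge b=0\leq x$ while $a\nleq x$ and $b\nleq x$; so ``$a\wedge b\leq x$ and $x\in{\rm Mi}(L)$'' does not yield ``$a\leq x$ or $b\leq x$''. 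The paper's proof repairs exactly this point: assuming $a\nleq x$ and $b\nleq x$, meet--irreducibility is applied in its correct form to conclude $(a\vee x)\wedge(b\vee x)>x$ (since $x=(a\vee x)\wedge(b\vee x)$ would force $x=a\vee x$ or $x=b\vee x$), and then the one--variable hypothesis is applied to the element $c=(a\vee x)\wedge(b\vee x)$ itself, using
\[
[c,c]\leq [a\vee x,b\vee x]=[a,b]\vee [a,x]\vee [x,b]\vee [x,x]\leq x,
\]
which gives $c\leq x$, a contradiction. So you must route the argument through $(a\vee x)\wedge(b\vee x)$ (or otherwise use the commutator hypothesis again after obtaining $a\wedge b\leq x$); as written, your step is a genuine gap, and your (iii) inherits it unless you either fix (ii) or derive (iii) from (i) as the paper does.
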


\begin{proof} (\ref{smimaxspec1}) Take $x\in {\rm Smi}(L)\subseteq L\setminus \{1\}$ with $[x^{+},x^{+}]\nleq x$, so that $[x^{+},x^{+}]=x^{+}$ since $[x^{+},x^{+}]\leq x^{+}$, and let $a,b\in L$ such that $[a,b]\leq x$. Assume by absurdum that $a\nleq x$ and $b\nleq x$, which means that $a\vee x\neq x$ and $b\vee x\neq x$, hence $a\vee x>x$ and $b\vee x>x$, so that $a\vee x\geq x^{+}$ and $b\vee x\geq x^{+}$. Then $x<x^{+}=[x^{+},x^{+}]\leq [a\vee x,b\vee x]=[a,b]\vee [a,x]\vee [x,b]\vee [x,x]\leq x$, so we have a contradiction. Thus $x\in {\rm Spec}_L$.

By the definition of ${\rm Spec}_L$, if $x\in {\rm Smi}(L)$ is such that $[x^{+},x^{+}]\leq x$, then $x\notin {\rm Spec}_L$. 
If $L$ has finite length, then ${\rm Spec}_L\subseteq {\rm Smi}(L)$, hence the equality for this case.

\noindent (\ref{smimaxspec0}) By the proof of \cite[Proposition $1.2$]{agl}, which we reproduce here for the sake of completeness. The left--to--right inclusion is clear. Now let $x\in {\rm Mi}(L)\setminus \{1\}$ such that, for all $a\in L$, $[a,a]\leq x$ implies $a\leq x$. Let $a,b\in L$ such that $[a,b]\leq x$ and assume by absurdum that $a\nleq x$ and $b\nleq x$, so that $a\vee x>x$ and $b\vee x>x$ and thus $(a\vee x)\wedge (b\vee x)>x$ since $x$ is meet--irreducible. Then $[(a\vee x)\wedge (b\vee x),(a\vee x)\wedge (b\vee x)]\leq [a\vee x,b\vee x]=[a,b]\vee [a,x]\vee [x,b]\vee [x,x]\leq x$, hence $(a\vee x)\wedge (b\vee x)\leq x$ by the choice of $x$, and we have a contradiction. Hence $x\in {\rm Spec}_L$.

\noindent (\ref{smimaxspec2}) Clearly, each $x\in {\rm Max}_L$ is strictly meet--irreducible, with $x^{+}=1$. Now apply (\ref{smimaxspec1}).

\noindent (\ref{smimaxspec3}) Assume that $1\in {\rm Cp}(L)$ and let $x\in L\setminus \{1\}$. We prove that the set $([x)_L\setminus \{1\},\leq )$ is inductively ordered. Let $C\subseteq [x)_L\setminus \{1\}$ such that $(C,\leq )$ is a chain, and let $t=\bigvee C$. We can not have $t=1$, because then, since $1\in {\rm Cp}(L)$, there would exist an $n\in \N ^*$ and elements $c_1,\ldots ,c_n\in C$ such that $\displaystyle 1=\bigvee _{i=1}^nc_i=\max \{c_1,\ldots ,c_n\}\in \{c_1,\ldots ,c_n\}\subseteq C\subseteq L\setminus \{1\}$, which gives us a contradiction. Hence $t\in L\setminus \{1\}$. But $x\leq t$, thus $t\in [x)_L\setminus \{1\}$, so indeed $([x)_L\setminus \{1\},\leq )$ is inductively ordered, therefore it has maximal elements by Zorn`s Lemma, and clearly its maximal elements are also maximal elements of $L\setminus \{1\}$, that is they belong to ${\rm Max}_L$, and they are greater than $x$.

\noindent (\ref{smimaxspec4}) By (\ref{smimaxspec2}) and (\ref{smimaxspec3}).\end{proof}

\begin{remark} By Lemma \ref{smimaxspec}, (\ref{smimaxspec0}), if $[\cdot ,\cdot ]=\wedge $, then ${\rm Spec}_L={\rm Mi}(L)\setminus \{1\}$.\label{commmeet}\end{remark}

\begin{remark} Let $x,y\in L$, $M=\{a\in L\ |\ [a,x]\leq y\}$ and $N=\{b\in L\ |\ [b,x]=y\}$.

Then $\displaystyle \bigvee \emptyset =0\in M$, in particular $M$ is nonempty, and, for any nonempty family $(a_i)_{i\in I}\subseteq M$, we have $[a_i,x]\leq y$ for all $i\in I$ and thus $\displaystyle [\bigvee _{i\in I}a_i,x]=\bigvee _{i\in I}[a_i,x]\leq y$, hence $\displaystyle \bigvee _{i\in I}a_i\in M$. Therefore $\displaystyle \bigvee _{i\in I}a_i\in M$ for any family $(a_i)_{i\in I}\subseteq M$, hence the set $M$ has a maximum, namely $\max (M)=\bigvee M$.

If $N$ is nonempty, then, for any nonempty family $(b_j)_{j\in J}\subseteq N$, we have $[b_j,x]=y$ for all $j\in J$ and thus $\displaystyle [\bigvee _{j\in J}b_j,x]=\bigvee _{j\in J}[b_j,x]=y$, hence $\displaystyle \bigvee _{j\in I}b_j\in N$, and thus $N$ has a maximum, namely $\max (N)=\bigvee N$.\label{maxcomm}\end{remark}

\begin{remark} The radical elements of $L$ are exactly the meets of the families of prime elements of $L$, hence ${\rm Spec}_L\subseteq R(L)$ and the map $x\mapsto \rho (x)$ is a closure operator on $L$ with associated closure system $R(L)=\{\rho (x)\ |\ x\in L\}=\{x\in L\ |\ \rho (x)=x\}$, so that $1\in R(L)$ since $\rho (1)=\bigwedge \emptyset =1$ and the following hold for all $a,b\in L$, $p\in {\rm Spec}_L$ and $r\in R(L)$:\begin{itemize}
\item $a\leq \rho (a)$;
\item $a\leq \rho (b)$ iff $\rho (a)\leq \rho (b)$, so that $a\leq r$ iff $\rho (a)\leq r$; in particular, $a\leq p$ iff $\rho (a)\leq p$, thus $V(a)=V(\rho (a))$;
\item $a\leq b$ implies $V(b)\subseteq V(a)$, which implies $\rho (a)\leq \rho (b)$, which in turn implies $V(b)=V(\rho (b))\subseteq V(\rho (a))=V(a)$, hence: $\rho (a)\leq \rho (b)$ iff $V(b)\subseteq V(a)$, and thus: $\rho (a)=\rho (b)$ iff $V(a)=V(b)$.\end{itemize}\label{vrho}\end{remark}

Throughout the rest of this section, unless mentiones otherwise, $\equiv $ will be the following equivalence on the set $L$: $\equiv \ =\{(a,b)\in L^2\ |\ \rho (a)=\rho (b)\}$. By Remark \ref{vrho}, $\equiv \ =\{(a,b)\in L^2\ |\ V(a)=V(b)\}$.

\begin{remark} $1\in 1/\!\!\equiv $, thus the equality $1/\!\!\equiv \ =\{1\}$ means that, for all $a\in L$: $a=1$ iff $\rho (a)=\rho (1)=1$ iff $V(a)=V(1)=\emptyset $, which in turn is equivalent to $\{a\in L\ |\ \rho (a)=1\}=\{1\}$ and to $V(a)\neq \emptyset $ for each $a\in L\setminus \{1\}$.\label{cls1cp}\end{remark}

\begin{lemma} If $(L,[\cdot ,\cdot ])$ is a commutator lattice such that $1\in {\rm Cp}(L)$ and $[1,1]=1$ and $\equiv \ =\{(a,b)\in L^2\ |\ \rho (a)=\rho (b)\}$, then $1/\!\!\equiv \ =\{1\}$.\label{cls1cpct}\end{lemma}

\begin{proof} By Lemma \ref{smimaxspec}, (\ref{smimaxspec4}), and Remark \ref{cls1cp}.\end{proof}

\begin{remark} We use Remark \ref{vrho} in what follows. Let $a,b\in L$ and $(a_i)_{i\in I}\subseteq L$. Then $V(a)=V(\rho (a))$, $V(a\wedge b)=V([a,b])=V(a)\cup V(b)=V(\rho (a)\wedge \rho (b))=V([\rho (a),\rho (b)])=V(\rho (a))\cup V(\rho (b))$ and $\displaystyle V(\bigvee _{i\in I}a_i)=\bigcap _{i\in I}V(a_i)=V(\bigvee _{i\in I}\rho (a_i))$, so that $\rho (a\wedge b)=\rho ([a,b])=\rho (a)\wedge \rho (b)=\rho (\rho (a)\wedge \rho (b))=\rho ([\rho (a),\rho (b)])$ and $\displaystyle \rho (\bigvee _{i\in I}a_i)=\rho (\bigvee _{i\in I}\rho (a_i))$, otherwise written: $a\wedge b\equiv [a,b]\equiv \rho (a)\wedge \rho (b)\equiv [\rho (a),\rho (b)]$ and $\displaystyle \bigvee _{i\in I}a_i\equiv \bigvee _{i\in I}\rho (a_i)$.

Indeed, $\rho (a)=\rho (\rho (a))$, hence the first equality. Since $[a,b]\leq a\wedge b\leq a,b$, we have $V(a)\cup V(b)\subseteq V(a\wedge b)\subseteq V([a,b])$. By the definition of prime elements, $V([a,b])\subseteq V(a)\cup V(b)$. Thus, by also using the first equality, we get the second set of equalities. Finally, $\displaystyle V(\bigvee _{i\in I}a_i)=[\bigvee _{i\in I}a_i)\cap {\rm Spec}_L=\bigcap _{i\in I}[a_i)\cap {\rm Spec}_L=\bigcap _{i\in I}([a_i)\cap {\rm Spec}_L)=\bigcap _{i\in I}V(a_i)=\bigcap _{i\in I}V(\rho (a_i))=V(\bigvee _{i\in I}\rho (a_i))$.\label{onthecg}\end{remark}

For any $x\in L$ and any $n\in \N ^*$, we shall denote: $x^1=x$ and $x^{n+1}=[x,x^n]$.

\begin{proposition} Let $(L,[\cdot ,\cdot ])$ be a commutator lattice and $\equiv \ =\{(a,b)\in L^2\ |\ \rho (a)=\rho (b)\}$. Then:\begin{enumerate}
\item\label{thecong1} $\equiv $ is a lattice congruence of $L$ which preserves arbitrary joins and the commutator operation $[\cdot ,\cdot ]$ and satisfies $[a,b]\equiv a\wedge b$ for all $a,b\in L$, $R(L)=\{\max (x/\!\!\equiv )\ |\ x\in L\}=\{x\in L\ |\ x=\max (x/\!\!\equiv )\}$, $0/\!\!\equiv \ =(\rho (0)]_L$, and, for all $x\in L$, $\rho (x)=\max (x/\!\!\equiv )=\max (\rho (x)/\!\!\equiv )=\min ([x)_L\cap R(L))$;
\item\label{thecong2} $\equiv \ =Cg _L(\{(x,\rho (x))\ |\ x\in L\})\supseteq Cg_L(\{(x\wedge y,[x,y])\ |\ x,y\in L\})\supseteq Cg_L(\{(x,[x,x])\ |\linebreak x\in L\})$;
\item\label{thecong3} for all $x\in L$ such that $x/\!\!\equiv $ has a minimum and all $a\in [\min (x/\!\!\equiv ))_L\supseteq x/\!\!\equiv $, $[a,\min (x/\!\!\equiv )]=\min (x/\!\!\equiv )$;
\item\label{thecong4} if, for each $x\in L$, there exists an $n_x\in \N ^*$ such that $\rho (x)^{n_{\scriptstyle x}}=\min (x/\!\!\equiv )$, then: $\equiv \ =Cg _L(\{(x,\rho (x))\ |\ x\in L\})=Cg_L(\{(x\wedge y,[x,y])\ |\ x,y\in L\})$ and, for all $a\in x/\!\!\equiv $ and all $n\in \N $ with $n\geq n_{\scriptstyle x}$, $a^n=\min (x/\!\!\equiv )$.\end{enumerate}\label{thecong}\end{proposition}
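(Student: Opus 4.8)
The plan is to derive (\ref{thecong1}) straight from the identities collected in Remarks~\ref{onthecg} and~\ref{vrho}, and then to bootstrap (\ref{thecong2})--(\ref{thecong4}) from it. For (\ref{thecong1}): since $a\equiv b$ means exactly $V(a)=V(b)$, the compatibility of $\equiv $ with $\wedge $, with arbitrary joins and with $[\cdot ,\cdot ]$, as well as $[a,b]\equiv a\wedge b$, all follow at once from $V(a\wedge b)=V([a,b])=V(a)\cup V(b)$ and $V(\bigvee _{i\in I}a_i)=\bigcap _{i\in I}V(a_i)$ of Remark~\ref{onthecg}: matching the arguments by $\equiv $ matches their images under $V$, hence also the images under $V$ of their meets, joins and commutators. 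For the remaining assertions I use that $x\mapsto \rho (x)$ is a closure operator with fixed--point set $R(L)$ (Remark~\ref{vrho}): $\rho (x)\in x/\!\!\equiv $ since $\rho (\rho (x))=\rho (x)$, and every $a\equiv x$ satisfies $a\leq \rho (a)=\rho (x)$, so $\rho (x)=\max (x/\!\!\equiv )$; hence $R(L)=\{\rho (x)\ |\ x\in L\}=\{\max (x/\!\!\equiv )\ |\ x\in L\}=\{x\in L\ |\ x=\max (x/\!\!\equiv )\}$, $\rho (x)=\min ([x)_L\cap R(L))$ by monotonicity of $\rho $, and $0/\!\!\equiv \, =(\rho (0)]_L$ because $a\leq \rho (0)$ forces $\rho (0)\leq \rho (a)\leq \rho (\rho (0))=\rho (0)$.

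For (\ref{thecong2}), part (\ref{thecong1}) shows $\equiv $ is a congruence of the commutator lattice $L$ containing every pair $(x,\rho (x))$, so $Cg_L(\{(x,\rho (x))\ |\ x\in L\})\subseteq \, \equiv $; conversely, $a\equiv b$ gives $\rho (a)=\rho (b)$, whence $(a,\rho (a))$ and $(b,\rho (b))=(b,\rho (a))$ lie in that congruence and transitivity produces $(a,b)$, so the two sets coincide. The two displayed inclusions are then immediate, since $\{(x,[x,x])\ |\ x\in L\}\subseteq \{(x\wedge y,[x,y])\ |\ x,y\in L\}$ (take $y=x$) and every pair $(x\wedge y,[x,y])$ lies in $\equiv $ by (\ref{thecong1}). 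For (\ref{thecong3}), write $m=\min (x/\!\!\equiv )$; for $a\geq m$ the commutator being below its arguments gives $[a,m]\leq a\wedge m=m$, while $[a,m]\equiv a\wedge m=m$ by (\ref{thecong1}) puts $[a,m]$ into $m/\!\!\equiv \, =x/\!\!\equiv $, forcing $[a,m]\geq m$; hence $[a,m]=m$.

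For (\ref{thecong4}) I would argue in two steps. First, the elementary observation that $(a,a^n)\in Cg_L(\{(x\wedge y,[x,y])\ |\ x,y\in L\})$ for all $a\in L$ and $n\in \N ^*$: because $a^{n+1}=[a,a^n]\leq a^n\leq a$, the pair $(a^n,a^{n+1})=(a\wedge a^n,[a,a^n])$ is a generator, so induction and transitivity finish it. Second, the commutator being completely join--distributive is monotone in each argument, so for $a\in x/\!\!\equiv $, which gives $m\leq a\leq \rho (x)=\max (x/\!\!\equiv )$ with $m=\min (x/\!\!\equiv )=\rho (x)^{n_{\scriptstyle x}}$, one has $m^n\leq a^n\leq \rho (x)^n$; applying (\ref{thecong3}) to $m$ and to $\rho (x)$ yields $[m,m]=m$ and $[\rho (x),m]=m$, whence $m^n=m$ for $n\geq 1$ and $\rho (x)^n=m$ for $n\geq n_{\scriptstyle x}$ by induction, so $a^n=m$ whenever $n\geq n_{\scriptstyle x}$; in particular $x^{n_{\scriptstyle x}}=m$. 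Combining the two steps, $(x,m)=(x,x^{n_{\scriptstyle x}})$ lies in $Cg_L(\{(x\wedge y,[x,y])\ |\ x,y\in L\})$ for every $x$, and since $a\equiv b$ makes $a/\!\!\equiv \, =b/\!\!\equiv $ share the single minimum $m$, the pairs $(a,m),(b,m)$, hence $(a,b)$, lie in that congruence; so $\equiv \, \subseteq Cg_L(\{(x\wedge y,[x,y])\ |\ x,y\in L\})$, and the reverse inclusion together with the identification with $Cg_L(\{(x,\rho (x))\ |\ x\in L\})$ come from (\ref{thecong2}).

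The step I expect to be the main obstacle is the combination inside (\ref{thecong4}): one has to recognize that the statement ``$a^n=\min (x/\!\!\equiv )$ for $a\in x/\!\!\equiv $ and $n\geq n_{\scriptstyle x}$'' must be proved first --- its proof resting only on part (\ref{thecong3}) and monotonicity of the commutator --- because it is exactly this collapse of $x^{n_{\scriptstyle x}}$ onto $\min (x/\!\!\equiv )$ that allows the elementary membership $(a,a^n)\in Cg_L(\{(x\wedge y,[x,y])\})$ to upgrade $Cg_L(\{(x\wedge y,[x,y])\})$ all the way to $\equiv $. The rest is bookkeeping on top of Remarks~\ref{onthecg} and~\ref{vrho}.
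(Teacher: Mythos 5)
Your proposal is correct and follows essentially the same route as the paper: part (\ref{thecong1}) from the identities of Remarks \ref{vrho} and \ref{onthecg}, part (\ref{thecong2}) by comparing generated congruences, part (\ref{thecong3}) from $[a,\min (x/\!\!\equiv )]\equiv a\wedge \min (x/\!\!\equiv )$ together with minimality, and part (\ref{thecong4}) via the induction showing $(a,a^n)\in Cg_L(\{(x\wedge y,[x,y])\ |\ x,y\in L\})$ and the collapse $a^n=\min (x/\!\!\equiv )$ for $n\geq n_x$. The only deviations are cosmetic: in (\ref{thecong4}) you relate each element of the class directly to the minimum, where the paper relates $(\min (x/\!\!\equiv ),\rho (x))$ and implicitly uses convexity of congruence classes, and your bounding of $a^n$ uses $m^n\leq a^n\leq \rho (x)^n$ instead of the paper's $a^n\leq a^{n_x}\leq \rho (x)^{n_x}$ — both are sound.
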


\begin{proof} We denote by $\sigma =Cg _L(\{(x,\rho (x))\ |\ x\in L\})$, $\sim =Cg_L(\{(x\wedge y,[x,y])\ |\ x,y\in L\})$ and $\approx =Cg_L(\{(x,[x,x])\ |\ x\in L\})$. We will repeatedly use Remark \ref{onthecg}.

\noindent (\ref{thecong1}) For all $a,b,x,y\in L$ such that $a\equiv x$ and $b\equiv y$, so that $\rho (a)=\rho (x)$ and $\rho (b)=\rho (y)$, we have $[a,b]\equiv a\wedge b\equiv \rho (a)\wedge \rho (b)=\rho (x)\wedge \rho (y)\equiv x\wedge y\equiv [x,y]$. For all $(a_i)_{i\in I},(b_i)_{i\in I}\subseteq L$ such that, for all $i\in I$, $a_i\equiv b_i$, so that $\rho (a_i)=\rho (b_i)$, we have $\displaystyle \bigvee _{i\in I}a_i\equiv \bigvee _{i\in I}\rho (a_i)=\bigvee _{i\in I}\rho (b_i)\equiv \bigvee _{i\in I}b_i$.

For each $x\in L$, we have $\rho (\rho (x))=\rho (x)$, so that $\rho (x)\in x/\!\! \equiv $, and each $y\in x/\!\! \equiv $ fulfills $y\leq \rho (y)=\rho (x)$, thus $\rho (x)=\max (x/\!\! \equiv )=\max (\rho (x)/\!\! \equiv )$ since $x/\!\! \equiv \ =\rho (x)/\!\! \equiv $. Therefore $\max (0/\!\! \equiv )=\rho (0)$, thus $0/\!\! \equiv \ =(\rho (0)]_L$, since $0/\!\! \equiv \ \in {\rm Id}(L)$. It also follows that $R(L)=\{\rho (x)\ |\ x\in L\}=\{\max (x/\!\! \equiv )\ |\ x\in L\}$ and, also, $R(L)=\{x\in L\ |\ x=\rho (x)\}=\{x\in L\ |\ x=\max (x/\!\! \equiv )\}$. Thus, for any $x\in L$, $\rho (x)\in [x)_L\cap R(L)$ and, for any $r\in [x)_L\cap R(L)$, we have $r\geq x$, thus $r=\rho (r)\geq \rho (x)$, hence $\rho (x)=\min ([x)_L\cap R(L))$.

\noindent (\ref{thecong2}) By (\ref{thecong1}), $\sim \subseteq \equiv $. Obviously, $\sim \supseteq Cg_L(\{(x\wedge x,[x,x])\ |\ x\in L\})=Cg_L(\{(x,$\linebreak $[x,x])\ |\ x\in L\})$. Each $x\in L$ fulfills $x\equiv \rho (x)$, hence $\sigma \subseteq \equiv $. If $x,y\in L$ fulfill $x\equiv y$, then $x\sigma \rho (x)=\rho (y)\sigma y$, hence $x\sigma y$, therefore $\sigma \subseteq \equiv $. Hence $\equiv \ =\sigma $.

\noindent (\ref{thecong3}) Let $x\in L$ such that $x/\!\!\equiv $ has a minimum and $a\in [\min (x/\!\!\equiv ))_L$. Then $\min (x/\!\!\equiv )\leq a$, $(x,\min (x/\!\!\equiv ))\in \equiv $ and $(\min (x/\!\!\equiv ),[\min (x/\!\!\equiv ),\min (x/\!\!\equiv )])\in \approx \subseteq \equiv $ by (\ref{thecong2}), so that $\min (x/\!\!\equiv )=\min (\min (x/\!\!\equiv )/\!\!\equiv )\leq [\min (x/\!\!\equiv ),\min (x/\!\!\equiv )]\leq [a,\min (x/\!\!\equiv )]\leq \min (x/\!\!\equiv )$, hence $[\min (x/\!\!\equiv ),\min (x/\!\!\equiv )]=[a,\min (x/\!\!\equiv )]=\min (x/\!\!\equiv )$.

\noindent (\ref{thecong4}) Let $a\in L$, arbitrary, and $x\in L$ such that $\min (x/\!\!\equiv )$ exists, so that $x/\!\!\equiv \ =\lfloor \min (x/\!\!\equiv ),\rho (x)\rceil _L$ by (\ref{thecong1}).

For all $n\in \N ^*$, $(a,a^2)\in \, \approx \, \subseteq \, \sim $ and, if $(a,a^n)\in \, \sim $, then $a=a\wedge a\sim a\wedge a^n\sim [a,a^n]=a^{n+1}$. Thus, by (\ref{thecong2}), $(a^n,a)\in \, \sim \, \subseteq \, \equiv $ for all $n\in \N ^*$, hence $(\min (x/\!\!\equiv ),\max (x/\!\!\equiv ))=(\rho (x)^{n_{\scriptstyle x}},\rho (x))\in \, \sim $, therefore $\equiv \, \subseteq \, \sim $, so $\equiv \, =\, \sim $.

If $a\in x/\!\!\equiv $ and $n\in \N $ with $n\geq n_x$, then, by the above, $a^n\, \equiv \, a\, \equiv \, x$, hence, by (\ref{thecong2}) and (\ref{thecong3}), $\min (x/\!\!\equiv )\leq a\leq \rho (x)$, thus $\min (x/\!\!\equiv )\leq a^n\leq a^{n_{\scriptstyle x}}\leq \rho (x)^{n_{\scriptstyle x}}=\min (x/\!\!\equiv )$, hence $a^n=\min (x/\!\!\equiv )$.\end{proof}

\begin{remark} Of course, for all $a\in L$, since $a\leq \rho (a)$, $\rho (a)=0$ implies $a=0$. By Proposition \ref{thecong}, (\ref{thecong1}), $0\in R(L)$ iff $\rho (0)=0$ iff $0/\!\!\equiv \ =\{0\}$ iff $(\forall \, a\in L)\, (\rho (a)=\rho (0)\Leftrightarrow a=0)$ iff $(\forall \, a\in L)\, (\rho (a)=0\Leftrightarrow a=0)$.\label{0rho}\end{remark}

\begin{remark} Note from Proposition \ref{thecong}, (\ref{thecong1}), that each class of $\equiv $ contains exactly one element of $R(L)$, namely the maximum of that class.

$\equiv \cap \ R(L)^2=\Delta _{R(L)}$, because, for all $x,y\in L$, $\rho (x)\equiv \rho (y)$ iff $\rho (x)=\rho (\rho (x))=\rho (\rho (y))=\rho (y)$. Hence, if $R(L)=L$, then $\equiv \ =\Delta _L$. Moreover: $R(L)=L$ iff $x=\rho (x)$ for all $x\in L$ iff $Cg_L(\{(x,\rho (x))\ |\ x\in L\})=\Delta _L$ iff $\equiv \ =\Delta _L$. Obviously, if $\equiv \ =\Delta _L$, then we also have $Cg_L(\{(x\wedge y,[x,y])\ |\ x,y\in L\})=Cg_L(\{(x,[x,x])\ |\ x\in L\})=\Delta _L$, thus $[\cdot ,\cdot ]=\wedge $.

If $L$ is algebraic and ${\rm Smi}(L)\subseteq {\rm Spec}_L$, in particular if $L$ is algebraic and $[\cdot ,\cdot ]=\wedge $ (see Remark \ref{commmeet}) or $L$ has finite length and all $x\in {\rm Smi}(L)={\rm Mi}(L)$ fulfill $[x^{+},x^{+}]\nleq x$ (see Lemma \ref{smimaxspec}, (\ref{smimaxspec1}))), in particular if $L$ has finite length and $[\cdot ,\cdot ]=\wedge $, then $R(L)=L$, thus $\equiv \ =\Delta _L$.

Now recall that: $[x,x]=x$ for all $x\in L$ iff $[x,y]=x\wedge y$ for all $x,y\in L$. Indeed, the converse implication is trivial, while, if $[x,x]=x$ for all $x\in L$, then we have, for all $x,y\in L$: $x\wedge y\geq [x,y]\geq [x\wedge y,x\wedge y]=x\wedge y$, therefore $[x,y]=x\wedge y$. Hence: $Cg_L(\{(x,[x,x])\ |\ x\in L\})=\Delta _L$ iff $Cg_L(\{(x\wedge y,[x,y])\ |\ x,y\in L\})=\Delta _L$.\label{cgrad}\end{remark}

\begin{remark} Obviously, all congruences of a lattice of finite length are complete. Note, also, that a distributive lattice of finite length is a frame.

Let $M$ be a lattice and $\theta \in {\rm Con}(M)$. Then, for any $x,y\in M$ such that $x/\theta $, $y/\theta $, $(x\vee y)/\theta $ and $(x\wedge y)/\theta $ have minima and maxima: $\min ((x\vee y)/\theta )=\min (x/\theta )\vee \min (y/\theta )$, $\max ((x\wedge y)/\theta )=\max (x/\theta )\wedge \max (y/\theta )$ and: $x/\theta \leq y/\theta $ iff $\min (x/\theta )\leq \min (y/\theta )$ iff $\max (x/\theta )\leq \max (y/\theta )$. Indeed, the argument below for the family $(x_i)_{i\in I}\subseteq M$ holds, without further hypotheses, for the finite family $\{x,y\}$, hence the first two equalities. Now, if $x/\theta \leq y/\theta $, that is $(x\vee y)/\theta =x/\theta \vee y/\theta =y/\theta $, then $\min (y/\theta )=\min ((x\vee y)/\theta )=\min (x/\theta )\vee \min (y/\theta )$, hence $\min (x/\theta )\leq \min (y/\theta )$. Conversely, since $(x,\min (x/\theta )),(y,\min (y/\theta ))\in \theta $, if $\min (x/\theta )\leq \min (y/\theta )$, then $x/\theta =\min (x/\theta )/\theta \leq \min (y/\theta )/\theta =y/\theta $. Therefore: $x/\theta \leq y/\theta $ iff $\min (x/\theta )\leq \min (y/\theta )$. Similarly, $x/\theta \leq y/\theta $ iff $\max (x/\theta )\leq \max (y/\theta )$.

If $M$ is complete and $\theta $ preserves arbitrary meets, then each class of $\theta $ has a minimum, because, for all $x\in M$, $\displaystyle \bigwedge (x/\theta )\in x/\theta $, thus $\displaystyle \bigwedge (x/\theta )=\min (x/\theta )$. Dually for joins and maxima of classes. Hence, if $M$ is a complete lattice and $\theta $ is a complete congruence, then all classes of $\theta $ have minima and maxima, so that $x/\theta =\lfloor \min (x/\theta ),\max (x/\theta )\rceil $ for all $x\in M$. 

Note, also, that, if $M$ is complete and $\theta $ preserves arbitrary joins, then, for any family $(x_i)_{i\in I}\subseteq M$, there exists in $M/\theta $ $\displaystyle \bigvee _{i\in I}x_i/\theta =(\bigvee _{i\in I}x_i)/\theta $, thus $M/\theta $ is a complete lattice. Similarly if $\theta $ preserves arbitrary meets.

If $M$ is a complete lattice and $\theta $ is a complete congruence, then, for any nonempty family $(x_i)_{i\in I}\subseteq M$, we have, in the complete lattice $M/\theta $: $\displaystyle \min (\bigvee _{i\in I}x_i/\theta )=\bigvee _{i\in I}\min (x_i/\theta )$ and $\displaystyle \max (\bigwedge _{i\in I}x_i/\theta )=\bigwedge _{i\in I}\max (x_i/\theta )$. Indeed, if we denote by $a_i=\min (x_i/\theta )$ for all $i\in I$ and by $\displaystyle a=\min ((\bigvee _{i\in I}x_i)/\theta )=\min (\bigvee _{i\in I}x_i/\theta )$, then, since $a_i\in x_i/\theta $ for all $i\in I$, we have $\displaystyle \bigvee _{i\in I}a_i\in (\bigvee _{i\in I}x_i)/\theta =\bigvee _{i\in I}x_i/\theta $, hence $\displaystyle \bigvee _{i\in I}a_i\geq a$. For all $k\in I$, $\displaystyle a\wedge a_k\in (\bigvee _{i\in I}x_i)/\theta \wedge x_k/\theta =((\bigvee _{i\in I}x_i)\wedge x_k)/\theta =((x_k\vee \bigvee _{i\in I\setminus \{k\}}x_i)\wedge x_k)/\theta =x_k/\theta $, thus $a\geq a\wedge a_k\geq a_k$, hence $\displaystyle a\geq \bigvee _{i\in I}a_i$. Therefore $\displaystyle a=\bigvee _{i\in I}a_i$, that is $\displaystyle \min (\bigvee _{i\in I}x_i/\theta )=\bigvee _{i\in I}\min (x_i/\theta )$. By duality, it follows that we also have $\displaystyle \max (\bigwedge _{i\in I}x_i/\theta )=\bigwedge _{i\in I}\max (x_i/\theta )$.\label{minmaxcls}\end{remark}

For any lattice $M$ and any $\theta \in {\rm Con}(M)$ such that all classes of $\theta $ have minima, let us denote by $[\cdot ,\cdot ]_{\theta }$ the binary operation on ${\rm Con}(M)$ defined by: $[x,y]_{\theta }=\min ((x\wedge y)/\theta )$ for all $x,y\in M$.

\begin{proposition} If $M$ is a complete lattice and $\theta \in {\rm Con}(M)$ is a complete congruence such that $M/\theta $ is a frame, then:\begin{itemize}
\item $(M,[\cdot ,\cdot ]_{\theta })$ is a commutator lattice;
\item $\theta =Cg_M(\{(x\wedge y,[x,y]_{\theta })\ |\ x,y\in M\})=Cg_M(\{(x,[x,x]_{\theta })\ |\ x\in M\})=\linebreak Cg_M(\{(\min (x/\theta ),\max (x/\theta ))\ |\ x\in M\})=Cg_M(\{(x,\max (x/\theta ))\ |\ x\in M\})$;
\item w.r.t. the commutator operation $[\cdot ,\cdot ]_{\theta }$, ${\rm Spec}_M={\rm Mi}(M)\cap \{\max(x/\theta )\ |\ x\in M\setminus 1/\theta \}$ and $R(M)\subseteq \{\max(x/\theta )\ |\ x\in M\}$;
\item if $M$ has finite length, then $R(M)=\{\max(x/\theta )\ |\ x\in M\}$, $\rho (y)=\max(y/\theta )$ for all $y\in M$, and $\theta =Cg_M(\{(x,\rho (x))\ |\ x\in M\})$.\end{itemize}\label{commofcg}\end{proposition}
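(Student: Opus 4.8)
The plan is to set $[x,y]:=\min ((x\wedge y)/\theta )$ for all $x,y\in M$. Since $M$ is complete and $\theta $ is a complete congruence, every $\theta $--class has a minimum and a maximum (Remark \ref{minmaxcls}), so this is well defined and $[x,x]=\min (x/\theta )$. Commutativity and $[x,y]\leq x\wedge y$ are immediate (the latter because $x\wedge y$ lies in its own class). For complete distributivity with respect to the join, let $x\in M$ and $(y_i)_{i\in I}\subseteq M$: for $I=\emptyset $ we get $[x,0]=\min (0/\theta )=0$ since $0$ is the bottom of $M$, and for $I\neq \emptyset $ I would pass to $M/\theta $, where, $\theta $ preserving arbitrary joins and meets and $M/\theta $ being a frame, $(x\wedge \bigvee _{i\in I}y_i)/\theta =x/\theta \wedge \bigvee _{i\in I}(y_i/\theta )=\bigvee _{i\in I}((x\wedge y_i)/\theta )$, so that the formula $\min (\bigvee _{i\in I}z_i/\theta )=\bigvee _{i\in I}\min (z_i/\theta )$ from Remark \ref{minmaxcls} gives $[x,\bigvee _{i\in I}y_i]=\bigvee _{i\in I}[x,y_i]$. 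Hence $(M,[\cdot ,\cdot ])$ is a commutator lattice, and Proposition \ref{thecong} applies to it with $L=M$.

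Next I would establish the chain $\theta \subseteq Cg_M(\{(x,[x,x])\ |\ x\in M\})\subseteq Cg_M(\{(x\wedge y,[x,y])\ |\ x,y\in M\})\subseteq Cg_M(\{(\min (x/\theta ),\max (x/\theta ))\ |\ x\in M\})\subseteq Cg_M(\{(x,\max (x/\theta ))\ |\ x\in M\})\subseteq \theta $ and conclude that all five congruences equal $\theta $. The last inclusion is clear from $x\,\theta \,\max (x/\theta )$; the fourth holds because $(\min (z/\theta ),\max (z/\theta ))=(\min (z/\theta ),\max (\min (z/\theta )/\theta ))$ is itself a generator of that congruence; for the third, $[x,y]=\min ((x\wedge y)/\theta )$ lies between $\min ((x\wedge y)/\theta )$ and $x\wedge y\leq \max ((x\wedge y)/\theta )$ and congruences are convex, so $(x\wedge y,[x,y])$ is collapsed; the second is trivial via $x=x\wedge x$; and for $\theta \subseteq Cg_M(\{(x,[x,x])\ |\ x\in M\})=Cg_M(\{(x,\min (x/\theta ))\ |\ x\in M\})$, if $(a,b)\in \theta $ then $\min (a/\theta )=\min (b/\theta )$, so $a$ and $b$ are both collapsed with this common element.

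For the spectrum, recall that ${\rm Spec}_M\subseteq {\rm Mi}(M)\setminus \{1\}$ and that, by Lemma \ref{smimaxspec}, (\ref{smimaxspec0}), for $p\in {\rm Mi}(M)\setminus \{1\}$ one has $p\in {\rm Spec}_M$ iff $[a,a]\leq p$ implies $a\leq p$ for all $a\in M$. If $p\in {\rm Spec}_M$ then, since $\theta =Cg_M(\{(x\wedge y,[x,y])\ |\ x,y\in M\})\subseteq \ \equiv $ by Proposition \ref{thecong}, (\ref{thecong2}), having $p\in 1/\theta $ would force $\rho (p)=\rho (1)=1$, contradicting $\rho (p)\leq p<1$ (as $p\in V(p)$); and applying the criterion to $a=\max (p/\theta )$, for which $[a,a]=\min (p/\theta )\leq p$, forces $\max (p/\theta )\leq p$, so $p=\max (p/\theta )\notin 1/\theta $. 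Conversely, if $q\in {\rm Mi}(M)$ equals $\max (x/\theta )$ for some $x\in M\setminus 1/\theta $, then $q=\max (q/\theta )$, $q\neq 1$, and whenever $[a,a]=\min (a/\theta )\leq q$, passing to $M/\theta $ gives $a/\theta \leq q/\theta $, hence $a\vee q\in q/\theta $, hence $a\leq \max (q/\theta )=q$; so $q\in {\rm Spec}_M$. Writing $R_{\theta }=\{\max (x/\theta )\ |\ x\in M\}=\{r\in M\ |\ r=\max (r/\theta )\}$, the formula for maxima of meets in Remark \ref{minmaxcls} (with $\theta $ preserving meets) shows $R_{\theta }$ is closed under arbitrary meets in $M$; since ${\rm Spec}_M\subseteq R_{\theta }$, each $\rho (x)=\bigwedge V(x)\in R_{\theta }$, i.e. $R(M)\subseteq R_{\theta }$.

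Finally, if $M$ has finite length then so does $M/\theta $, hence $M/\theta $ is algebraic and every one of its elements is a meet of (strictly) meet--irreducible elements. For $r\in R_{\theta }$ with $r\neq 1$ (the case $r=1=\rho (1)$ being clear) I would write $r/\theta =\bigwedge _{j\in J}\overline{q}_j$ with each $\overline{q}_j\in {\rm Mi}(M/\theta )$ not the top, let $q_j$ be the maximum of the class $\overline{q}_j$ (so $q_j/\theta =\overline{q}_j$), check $q_j\in {\rm Mi}(M)$ (if $q_j=a\wedge b$ with $a,b>q_j$, then $q_j/\theta =a/\theta \wedge b/\theta $ forces $a\leq q_j$ or $b\leq q_j$), so $q_j\in {\rm Spec}_M$ by the converse just proved; from $r/\theta \leq q_j/\theta $ we get $r\leq q_j$, so $q_j\in V(r)$, whence $\rho (r)\leq \bigwedge _{j\in J}q_j$; but $\bigwedge _{j\in J}q_j\in R_{\theta }$ and $(\bigwedge _{j\in J}q_j)/\theta =\bigwedge _{j\in J}\overline{q}_j=r/\theta $, so $\bigwedge _{j\in J}q_j=\max (r/\theta )=r$, giving $\rho (r)=r$. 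Hence $R_{\theta }\subseteq R(M)$, so $R(M)=R_{\theta }$; then for $y\in M$ we have $y\ \equiv \ \max (y/\theta )\in R(M)$, so $\rho (y)=\rho (\max (y/\theta ))=\max (y/\theta )$, and $\theta =Cg_M(\{(x,\rho (x))\ |\ x\in M\})$ follows from the string above. I expect the main obstacle to be this last step $R_{\theta }\subseteq R(M)$, i.e. lifting the meet--irreducibles of the quotient frame $M/\theta $ to prime elements of $M$ and controlling the meet of the lifts; everything else reduces to the min/max calculus for $\theta $--classes recorded in Remark \ref{minmaxcls} together with the chain of congruences above.
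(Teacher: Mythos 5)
Your proposal is correct, and up to the last bullet it follows the paper's proof essentially verbatim: the same definition $[x,y]=\min((x\wedge y)/\theta)$, the same use of Remark \ref{minmaxcls} and the frame property of $M/\theta$ for complete join-distributivity, an equivalent (cyclically chained) verification that all the listed congruences coincide with $\theta$, the same application of Lemma \ref{smimaxspec}, (\ref{smimaxspec0}), for the description of ${\rm Spec}_M$, and the same meet-closure argument for $R(M)\subseteq\{\max(x/\theta)\ |\ x\in M\}$. The only genuine divergence is the finite-length step: the paper stays inside $M$, splitting a non-meet-irreducible element of $\{\max(x/\theta)\ |\ x\in M\}\setminus\{1\}$ as a meet of strictly larger maxima of classes and iterating (finite length guarantees termination), whereas you decompose the class $r/\theta$ into meet-irreducibles of the quotient and lift them, via maxima of classes, to primes of $M$, then use preservation of meets by $\theta$ to recover $r$. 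Both work; your version needs the small additional observation that $M/\theta$ inherits finite length (or at least that its elements are meets of non-top meet-irreducibles), which is immediate since strictly increasing chains of classes yield strictly increasing chains of their maxima, and it has the mild advantage of making the role of the quotient frame explicit, while the paper's argument avoids any lifting and works directly with the decomposition in $M$.
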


\begin{proof} We are using Remark \ref{minmaxcls}.

For each $x\in M$, $x/\theta $ has a minimum and a maximum, so that $x/\theta =\lfloor \min (x/\theta ),\linebreak \max (x/\theta )\rceil _M$. For all $x,y\in M$, $[x,y]_{\theta }=\min ((x\wedge y)/\theta )=[y,x]_{\theta }\leq x\wedge y$ and $[x,y]_{\theta }\in (x\wedge y)/\theta $, thus $[x,y]_{\theta }/\theta =(x\wedge y)/\theta $. Trivially, $[\cdot ,\cdot ]_{\theta }$ distributes over $\displaystyle \bigvee \emptyset =0$. Now let us consider a nonempty family $(x_i)_{i\in I}\subseteq M$ and and $a\in M$. Since $M/\theta $ is a frame, we get: $\displaystyle [\bigvee _{i\in I}x_i,a]_{\theta }=\min (((\bigvee _{i\in I}x_i)\wedge a)/\theta )=\min ((\bigvee _{i\in I}(x_i\wedge a))/\theta )=\bigvee _{i\in I}\min((x_i\wedge a)/\theta )=\bigvee _{i\in I}[x_i,a]_{\theta }$. If $x,y,u,v\in M$ are such that $x\leq u$ and $y\leq v$, then $x\wedge y\leq u\wedge v$, thus $[x,y]_{\theta }=\min ((x\wedge y)/\theta )\leq \min ((u\wedge v)/\theta )=[u,v]_{\theta }$. Therefore $(M,[\cdot ,\cdot ]_{\theta })$ is a commutator lattice.

Let us denote by $\approx =Cg_M(\{(x,[x,x]_{\theta })\ |\ x\in M\})$ and by $\sim =Cg_M(\{(x\wedge y,[x,y]_{\theta })\ |\linebreak x,y\in M\})$. Then $\approx =Cg_M(\{(x\wedge x,[x,x]_{\theta })\ |\ x\in M\})\subseteq \sim \subseteq \theta $ since $(x\wedge y,[x,y]_{\theta })\in \theta $ for all $x,y\in M$. But, for all $x\in L$, $[\max (x/\theta ),\max (x/\theta )]_{\theta }=\min ((\max (x/\theta )\wedge \max (x/\theta ))/\theta )=\min (\max (x/\theta )/\theta )=\min (x/\theta )$ since $(x,\max (x/\theta ))\in \theta $. Therefore $(\max (x/\theta ),\min (x/\theta ))=(\max (x/\theta ),[\max (x/\theta ),\max (x/\theta )]_{\theta })\in \approx $. Hence, for all $y,z\in M$ such that $(y,z)\in \theta $, we have $\min (y/\theta )=\min (z/\theta )\leq y,z\leq \max (y/\theta )=\max (z/\theta )$, and $(\min (y/\theta ),\max (y/\theta ))\in \approx $, hence $(y,z)\in \approx $. Thus $\theta \subseteq \approx $. Therefore $\theta =\sim =\approx =Cg_M(\{(\min (x/\theta ),\max (x/\theta ))\ |\ x\in M\})=Cg_M(\{(x,\max (x/\theta ))\ |\ x\in M\})$, since, as above, all inclusions hold.

By Lemma \ref{smimaxspec}, (\ref{smimaxspec0}), ${\rm Spec}_M=\{x\in {\rm Mi}(M)\setminus \{1\}\ |\ (\forall \, a\in M)\, ([a,a]_{\theta }\leq x\Rightarrow a\leq x)\}$. Thus, if $x\in {\rm Spec}_M$ and $a=\max (x/\theta )$, then $[a,a]_{\theta }=\min (x/\theta )\leq x$, thus $\max (x/\theta )=a\leq x\leq \max (x/\theta )$, hence $x=\max (x/\theta )$. For all $x\in M$ such that $x=\max (x/\theta )$ and all $a\in M$, if $\min (a/\theta )=[a,a]_{\theta }\leq x$, then $a/\theta =\min (a/\theta )/\theta \leq x/\theta $, hence $a\leq \max (a/\theta )\leq \max (x/\theta )=x$, thus, if $x\in {\rm Mi}(M)\setminus \{1\}$, then $x\in {\rm Spec}_M$. Therefore ${\rm Spec}_M=\{x\in {\rm Mi}(M)\setminus \{1\}\ |\ x=\max (x/\theta )\}$, otherwise written ${\rm Spec}_M={\rm Mi}(M)\cap \{\max(x/\theta )\ |\ x\in M\setminus 1/\theta \}$, since, clearly, $\{x\in M\ |\ x=\max (x/\theta )\}=\{\max(x/\theta )\ |\ x\in M\}$ and $1=\max(1/\theta )$.

The set $\{\max (x/\theta )\ |\ x\in M\}$ is closed w.r.t. arbitrary meets, in particular all meets of prime elements of $M$ belong to $\{\max (x/\theta )\ |\ x\in M\}$, that is $R(M)\subseteq \{\max (x/\theta )\ |\ x\in M\}$. $R(M)\supseteq \{1\}\cup {\rm Spec}_M=\{1\}\cup ({\rm Mi}(M)\cap \{\max(x/\theta )\ |\ x\in M\setminus 1/\theta \})$ by the above. Let $x\in M$ such that $x=\max (x/\theta )$, but $x\neq 1$ and $x\notin {\rm Mi}(M)$, so that $x\notin 1/\theta $ and $x=a\wedge b$ for some $a,b\in M$ with $x<a$ and $x<b$. Then $x=\max (x/\theta )=\max ((a\wedge b)/\theta )=\max (a/\theta )\wedge \max (b/\theta )$, so $x$ is a meet of two elements of $\{\max (y/\theta )\ |\ y\in M\}$, both of which are strictly greater than $x$, so that none of them equals $1$, thus none of them belongs to $1/\theta $. If the lattice $M$ has finite length, it follows that $x$ is a finite meet of elements of ${\rm Mi}(M)\cap \{\max (y/\theta )\ |\ y\in M\setminus 1/\theta \}={\rm Spec}_M$, thus $x\in R(M)$. Therefore, if $M$ has finite length, then $R(M)=\{\max (x/\theta )\ |\ x\in M\}$, hence $\rho (x)=\max (x/\theta )$ for all $x\in M$, and thus $\theta =Cg_M(\{(x,\max (x/\theta ))\ |\ x\in M\})=Cg_M(\{(x,\rho (x))\ |\ x\in M\})$.\end{proof}

\begin{proposition} Let $M$ be a lattice, $\langle \cdot ,\cdot \rangle $ be a binary operation on $M$, $\gamma =Cg_M(\{(x\wedge y,\langle x,y\rangle )\ |\ x,y\in M\})$ and $\theta ,\zeta \in {\rm Con}(M)$ such that all classes of $\gamma $, $\theta $ and $\zeta $ have minima. Then:\begin{enumerate}
\item\label{commsofcgs0} for all $x\in M$, $\min (x/(\theta \cap \zeta ))=\min (x/\theta )\vee \min (x/\zeta )$;
\item\label{commsofcgs1} $[\cdot ,\cdot ]_{\theta \cap \zeta }=[\cdot ,\cdot ]_{\theta }\vee [\cdot ,\cdot ]_{\zeta }$;
\item\label{commsofcgs2} if $\zeta \subseteq \theta $, then $[\cdot ,\cdot ]_{\theta }\leq [\cdot ,\cdot ]_{\zeta }$;
\item\label{commsofcgs3} $[\cdot ,\cdot ]_{\gamma }\leq \langle \cdot ,\cdot \rangle $.\end{enumerate}\label{commsofcgs}\end{proposition}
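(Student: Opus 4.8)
The plan is to reduce everything to part (\ref{commsofcgs0}), which is the only part requiring an actual argument; the remaining three parts then follow immediately from the definition $[x,y]_{\eta }=\min ((x\wedge y)/\eta )$ together with elementary monotonicity of minima of congruence classes.

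For (\ref{commsofcgs0}), fix $x\in M$ and set $a=\min (x/\theta )$ and $b=\min (x/\zeta )$, which exist by hypothesis. First I would show $a\vee b\in x/(\theta \cap \zeta )$: since $(a,x)\in \theta $ we get $(a\vee b,x\vee b)\in \theta $, and $b\leq x$ forces $x\vee b=x$, so $(a\vee b,x)\in \theta $; symmetrically, from $(b,x)\in \zeta $ and $a\leq x$ we get $(a\vee b,x)\in \zeta $; hence $(a\vee b,x)\in \theta \cap \zeta $. Conversely, any $c\in x/(\theta \cap \zeta )$ satisfies $c\in x/\theta $ and $c\in x/\zeta $ (since $x/(\theta \cap \zeta )=x/\theta \cap x/\zeta $ as sets), so $c\geq a$ and $c\geq b$, whence $c\geq a\vee b$. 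Thus $a\vee b$ is a lower bound of $x/(\theta \cap \zeta )$ lying in it, i.e. $\min (x/(\theta \cap \zeta ))=a\vee b=\min (x/\theta )\vee \min (x/\zeta )$; as a by--product this shows that every class of $\theta \cap \zeta $ has a minimum, so that $[\cdot ,\cdot ]_{\theta \cap \zeta }$ is well defined. This computation parallels the meet computation for a single congruence in Remark \ref{minmaxcls}.

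Part (\ref{commsofcgs1}) then follows by applying (\ref{commsofcgs0}) with $x\wedge y$ in place of $x$: $[x,y]_{\theta \cap \zeta }=\min ((x\wedge y)/(\theta \cap \zeta ))=\min ((x\wedge y)/\theta )\vee \min ((x\wedge y)/\zeta )=[x,y]_{\theta }\vee [x,y]_{\zeta }$. For part (\ref{commsofcgs2}), if $\zeta \subseteq \theta $ then $(x\wedge y)/\zeta \subseteq (x\wedge y)/\theta $, so $\min ((x\wedge y)/\zeta )$ is an element of the larger class $(x\wedge y)/\theta $ and hence dominates its minimum: $[x,y]_{\theta }=\min ((x\wedge y)/\theta )\leq \min ((x\wedge y)/\zeta )=[x,y]_{\zeta }$. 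For part (\ref{commsofcgs3}), by the definition of $\gamma $ the pair $(x\wedge y,\langle x,y\rangle )$ lies in $\gamma $, so $\langle x,y\rangle \in (x\wedge y)/\gamma $, and therefore $[x,y]_{\gamma }=\min ((x\wedge y)/\gamma )\leq \langle x,y\rangle $.

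I do not expect a genuine obstacle here; the only point demanding a little care is, in part (\ref{commsofcgs0}), to verify honest membership $a\vee b\in x/(\theta \cap \zeta )$ (rather than merely that $a\vee b$ is a common lower bound of the two classes) — which is exactly where compatibility of $\theta $ and $\zeta $ with $\vee $ enters — and to observe that the same argument simultaneously supplies the existence of $\min (x/(\theta \cap \zeta ))$ needed for $[\cdot ,\cdot ]_{\theta \cap \zeta }$ to make sense.
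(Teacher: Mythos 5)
Your proof is correct and follows essentially the same route as the paper: in (i) show $\min(x/\theta)\vee\min(x/\zeta)$ lies in $x/(\theta\cap\zeta)=x/\theta\cap x/\zeta$ and bounds it below, then read off (ii)--(iv) from the definition $[x,y]_{\eta}=\min((x\wedge y)/\eta)$. Your version is if anything slightly tidier, since you spell out the join-compatibility step and thereby also get existence of $\min(x/(\theta\cap\zeta))$, which the paper's proof tacitly assumes.
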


\begin{proof} (\ref{commsofcgs0}) Let $a=\min (x/\theta )$, $b=\min (x/\zeta )$ and $c=\min (x/(\theta \cap \zeta ))=\min (x/\theta \cap x/\zeta )$ since $x/(\theta \cap \zeta )=x/\theta \cap x/\zeta $. Then $a\vee b\in x/\theta \cap x/\zeta $, hence $a\vee b\geq c$. But $c\wedge a\in x/\theta $, hence $c\geq c\wedge a\geq a$ and, analogously, $c\geq b$, thus $c\geq a\vee b$. Therefore $c=a\vee b$.

\noindent (\ref{commsofcgs1}) By (\ref{commsofcgs0}).

\noindent (\ref{commsofcgs2}) By (\ref{commsofcgs1}), or simply noticing that $\zeta \subseteq \theta $ means that, for all $x\in M$, $x/\zeta \subseteq x/\theta $, so that $\min (x/\zeta )\geq \min (x/\theta )$, hence the inequality in the enunciation.

\noindent (\ref{commsofcgs3}) For all $x,y\in M$, $\langle x,y\rangle \in (x\wedge y)/\gamma $, thus $\langle x,y\rangle \geq \min ((x\wedge y)/\gamma )=[x,y]_{\gamma }$.\end{proof}

Recall that a lattice $M$ with $0$ is said to be {\em $0$--regular} iff, for any $\theta ,\zeta \in {\rm Con}(M)$, $0/\theta =0/\zeta $ implies $\theta =\zeta $.

\begin{proposition} If $(L,[\cdot ,\cdot ])$ is a commutator lattice and $\equiv \ =\{(a,b)\in L^2\ |\ \rho (a)=\rho (b)\}$, then:\begin{enumerate}
\item\label{allradical0} $R(L)=L$ iff $\equiv \ =\Delta _L$;
\item\label{allradical1} if $R(L)=L$, then $[\cdot ,\cdot ]=\wedge $ in $L$, in particular $L$ is a frame;
\item\label{allradical2} if $L$ is algebraic, in particular if $L$ has finite length, then: $R(L)=L$ iff $[\cdot ,\cdot ]=\wedge $ in $L$;
\item\label{allradical3} If $L$ is $0$--regular and $\rho (0)=0$, then $R(L)=L$, so $[\cdot ,\cdot ]=\wedge $ in $L$ and $L$ is a frame.\end{enumerate}\label{allradical}\end{proposition}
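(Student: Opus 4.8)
The plan is to derive all four parts from properties of the congruence $\equiv $ that are already recorded, chiefly in Remark \ref{cgrad}, Remark \ref{framecommlat}, Proposition \ref{thecong} and Remark \ref{vrho}; no new construction is needed. Part (\ref{allradical0}) is exactly the chain of equivalences established in Remark \ref{cgrad}, namely ``$R(L)=L$ iff $x=\rho (x)$ for all $x\in L$ iff $Cg_L(\{(x,\rho (x))\ |\ x\in L\})=\Delta _L$ iff $\equiv \ =\Delta _L$'', so for it I would simply cite that remark. For part (\ref{allradical1}), starting from $R(L)=L$, I would apply part (\ref{allradical0}) to obtain $\equiv \ =\Delta _L$, then invoke Proposition \ref{thecong}, (\ref{thecong2}), which gives $Cg_L(\{(x\wedge y,[x,y])\ |\ x,y\in L\})\subseteq \equiv \ =\Delta _L$, hence $[x,y]=x\wedge y$ for all $x,y\in L$; finally Remark \ref{framecommlat} converts ``$[\cdot ,\cdot ]=\wedge $'' into ``$L$ is a frame''.

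For part (\ref{allradical2}), one implication is precisely part (\ref{allradical1}) and uses no hypothesis on $L$. For the converse I would assume $L$ algebraic with $[\cdot ,\cdot ]=\wedge $: by Remark \ref{commmeet} we have ${\rm Spec}_L={\rm Mi}(L)\setminus \{1\}$, and since $1\notin {\rm Smi}(L)$ this gives ${\rm Smi}(L)\subseteq {\rm Mi}(L)\setminus \{1\}={\rm Spec}_L$; because every element of an algebraic lattice is a meet of strictly meet--irreducible elements, every element of $L$ is then a meet of prime elements, that is $R(L)=L$ by Remark \ref{vrho}. The finite--length case needs no separate argument, a lattice of finite length being algebraic.

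For part (\ref{allradical3}), I would use Proposition \ref{thecong}, (\ref{thecong1}), to get $0/\!\!\equiv \ =(\rho (0)]_L$, so that the assumption $\rho (0)=0$ forces $0/\!\!\equiv \ =(0]_L=\{0\}=0/\Delta _L$; since $\equiv $ is a lattice congruence of $L$ (same reference) and $L$ is $0$--regular, this yields $\equiv \ =\Delta _L$, whence $R(L)=L$ by part (\ref{allradical0}) and $[\cdot ,\cdot ]=\wedge $ with $L$ a frame by part (\ref{allradical1}). None of the steps is genuinely hard; the only one calling for a little care is the converse in part (\ref{allradical2}), which rests on the structure theorem that in an algebraic lattice every element is a meet of strictly meet--irreducibles — and even this is quoted verbatim in the excerpt.
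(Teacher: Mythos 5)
Your proof is correct and follows essentially the same route as the paper: the paper's own proof simply cites Remark \ref{cgrad} for parts (\ref{allradical0})--(\ref{allradical2}) and combines it with Remark \ref{0rho} and $0$--regularity for part (\ref{allradical3}), and your argument merely unpacks that remark into its constituent ingredients (Proposition \ref{thecong}, Remarks \ref{commmeet}, \ref{vrho} and \ref{framecommlat}).
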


\begin{proof} Recall from Proposition \ref{thecong}, (\ref{thecong1}), that $\equiv \ \in {\rm Con}(L)$.

\noindent (\ref{allradical0}),(\ref{allradical1}),(\ref{allradical2}) By Remark \ref{cgrad}.

\noindent (\ref{allradical3}) If $L$ is $0$--regular, then, by (\ref{allradical1}) and Remarks \ref{0rho} and \ref{cgrad}: if $\rho (0)=0$, which means that $0/\!\!\equiv \ =\{0\}=0/\Delta _L$, then $\equiv \ =\Delta _L$, so that $R(L)=L$, thus $[\cdot ,\cdot ]=\wedge $ in $L$ and $L$ is a frame.\end{proof}

\begin{remark} Let $(L,[\cdot ,\cdot ])$ be a commutator lattice and consider a $\theta \in {\rm Con}(L)$ that preserves arbitrary joins and the commutator. If we define $[\cdot ,\cdot ]_{(\theta )}:L/\theta \times L/\theta \rightarrow L/\theta $ by $[x/\theta ,y/\theta ]_{(\theta )}=[x,y]/\theta $ for all $x,y\in L$, then it is straightforward that $(L/\theta ,[\cdot ,\cdot ]_{(\theta )})$ is a commutator lattice. In particular, $L/\!\!\equiv $ is a commutator lattice in which $[\cdot ,\cdot ]_{(\equiv )}=\wedge $. See also the proof of Proposition \ref{frame}.\label{quoequiv}\end{remark}

\begin{proposition} If $(L,[\cdot ,\cdot ])$ is a commutator lattice and $\equiv \ =\{(a,b)\in L^2\ |\ \rho (a)=\rho (b)\}$, then $L/\!\!\equiv $ is a frame, thus:\begin{itemize}
\item ${\cal A}nn(L/\!\!\equiv )={\rm PAnn}(L/\!\!\equiv )\subseteq {\rm PId}(L/\!\!\equiv )$;
\item $L/\!\!\equiv $ is Stone iff $L/\!\!\equiv $ is strongly Stone.\end{itemize}\label{frame}\end{proposition}

\begin{proof} For all $x\in L$ and any family $(y_i)_{i\in I}\subseteq L$, $\displaystyle x/\!\!\equiv \wedge \; (\bigvee _{i\in I}y_i/\!\!\equiv )=x/\!\!\equiv \wedge \; (\bigvee _{i\in I}y_i)/\!\!\equiv \ =(x\wedge (\bigvee _{i\in I}y_i))/\!\!\equiv \ =[x,\bigvee _{i\in I}y_i]/\!\!\equiv \ =(\bigvee _{i\in I}[x,y_i])/\!\!\equiv \ =\bigvee _{i\in I}[x,y_i]/\!\!\equiv \ =\bigvee _{i\in I}(x\wedge y_i)/\!\!\equiv \ =\bigvee _{i\in I}(x/\!\!\equiv \wedge \; y_i/\!\!\equiv )$.

This also followed from Remarks \ref{framecommlat} and \ref{quoequiv}.\end{proof}

\begin{lemma} If $(L,[\cdot ,\cdot ])$ is a commutator lattice such that $\rho (0)=0$ and $\equiv \ =\{(a,b)\in L^2\ |\ \rho (a)=\rho (b)\}$, then:\begin{enumerate}
\item\label{suchrho2} for all $a,b\in L$, $a\wedge b=0$ iff $[a,b]=0$;
\item\label{suchrho3} for any $U\subseteq L$, ${\rm Ann}_L(U)=\{x\in L\ |\ (\forall \, u\in U)\, ([x,u]=0)\}$.\end{enumerate}\label{suchrho}\end{lemma}

\begin{proof} (\ref{suchrho2}) By Remarks \ref{0rho} and \ref{onthecg}, if $\rho (0)=0$, then, for all $a,b\in L$: $a\wedge b=0$ iff $a\wedge b\in 0/\!\!\equiv $ iff $[a,b]\in 0/\!\!\equiv $ iff $[a,b]=0$.

\noindent (\ref{suchrho3}) By (\ref{suchrho2}) and the definition of ${\rm Ann}_L(U)$.\end{proof}

\begin{proposition} If $(L,[\cdot ,\cdot ])$ is a commutator lattice such that $\rho (0)=0$ and $\equiv \ =\{(a,b)\in L^2\ |\ \rho (a)=\rho (b)\}$, then:\begin{itemize}
\item for all $U\subseteq L$, ${\rm Ann}_L(U)={\rm Ann}_L((U]_L)={\rm Ann}_L(\bigvee U)$;
\item for any family $(I_k)_{k\in K}\subseteq {\rm Id}(L)$, $\displaystyle {\rm Ann}_L(\bigvee _{k\in K}I_k)=\bigcap _{k\in K}{\rm Ann}_L(I_k)$;
\item ${\cal A}nn(L)={\rm PAnn}(L)\subseteq {\rm PId}(L)$;
\item $L$ is Stone iff $L$ is strongly Stone.\end{itemize}\label{annid}\end{proposition}

\begin{proof} By Propositions \ref{frame} and \ref{quodist} and Proposition \ref{thecong}, (\ref{thecong1}), we have the equalities. By Proposition \ref{quodist}, (\ref{quodist3}), $L$ is Stone iff it is strongly Stone.\end{proof}

\begin{remark} By Proposition \ref{frame}, the second part of condition $(4)_{L/\equiv }$ is trivially fulfilled, so that $(4)_{L/\equiv }$ is equivalent to $(iv)_{L/\equiv }$. By Proposition \ref{annid}, if $\rho (0)=0$, then the second part of condition $(4)_L$ is trivially fulfilled, so that $(4)_L$ is equivalent to $(iv)_L$.\label{(iv)}\end{remark}

\section{Transferring Davey`s Theorem to Commutator Lattices and Related Results}
\label{thetransfer}

Throughout this section, unless mentioned otherwise, $(L,\vee ,\wedge ,[\cdot ,\cdot ],0,1)$ will be a commutator lattice and, as in Section \ref{thecongruence}, we will denote by $\equiv \ =\{(x,y)\in L^2\ |\ \rho (x)=\rho (y)\}\in {\rm Con}(L)$. Recall from Remark \ref{0rho} that $0/\!\! \equiv \ =\{0\}$ iff $\rho (0)=0$.

Let us define $\dotvee :R(L)^2\rightarrow R(L)$ by $\rho (x)\dotvee \rho (y)=\rho (\rho (x)\vee \rho (y))$ for all $x,y\in L$, and, for any family $(x_i)_{i\in I}\subseteq L$, let us denote by $\displaystyle \stackrel{\bullet }{\bigvee _{i\in I}}\rho (x_i)=\rho (\bigvee _{i\in I}\rho (x_i))\in R(L)$.

\begin{remark} Note from the definitions of $\dotvee $ and $\equiv $ and Proposition \ref{thecong}, (\ref{thecong1}), that $\equiv $ preserves $\dotvee $ over arbitrary families of elements of $R(L)$.\end{remark}

\begin{proposition} If $(L,[\cdot ,\cdot ])$ is a commutator lattice and $\dotvee $ is the binary operation defined on $R(L)$ as above, then:\begin{enumerate}
\item\label{framerho1} $(R(L),\dotvee ,\wedge ,\rho (0),1)$ is a frame, isomorphic to $L/\!\! \equiv $;
\item\label{framerho2} in the commutator lattice $(R(L),\dotvee ,\wedge ,\wedge ,\rho (0),1)$, ${\rm Spec}_{R(L)}={\rm Spec}_L$ and $R(R(L))=R(L)$, in particular $\rho (0)\in R(R(L))$;
\item\label{framerho3} in the commutator lattice $
(L/\!\! \equiv ,\vee ,\wedge ,\wedge ,0/\!\!\equiv ,1/\!\!\equiv )$, ${\rm Spec}_{L/\equiv }=\{p/\!\!\equiv \ |\ p\in V(\rho (0))=[\rho (0))_L\cap {\rm Spec}_L\}$ and $R(L/\!\! \equiv )=L/\!\! \equiv $, in particular $\rho (0/\!\!\equiv )=0/\!\!\equiv $.\end{enumerate}\label{framerho}\end{proposition}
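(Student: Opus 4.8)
The three parts of Proposition~\ref{framerho} are all established by unwinding the congruence $\equiv $ and the closure operator $\rho $, using Proposition~\ref{thecong}, Proposition~\ref{frame} and Remark~\ref{vrho}. For part~(\ref{framerho1}), the plan is to exhibit the map $\lambda \mid _{R(L)}:R(L)\rightarrow L/\!\!\equiv $, $r\mapsto r/\!\!\equiv $, and show it is a bijection: injectivity holds because distinct radical elements are the maxima of distinct $\equiv $--classes (Remark~\ref{cgrad}: $\equiv \cap \, R(L)^2=\Delta _{R(L)}$), and surjectivity because every $\equiv $--class $x/\!\!\equiv $ contains the radical element $\rho (x)=\max (x/\!\!\equiv )$ by Proposition~\ref{thecong}, (\ref{thecong1}). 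To see it is a lattice isomorphism onto the frame $L/\!\!\equiv $, I check that it carries $\wedge $ to $\wedge $ and $\dotvee $ to $\vee $: for $x,y\in L$ we have $\rho (x)\wedge \rho (y)\equiv x\wedge y$ and $\rho (x)\dotvee \rho (y)=\rho (\rho (x)\vee \rho (y))\equiv \rho (x)\vee \rho (y)\equiv x\vee y$ by Remark~\ref{onthecg}, and the bottom $\rho (0)=\max (0/\!\!\equiv )$ and top $1$ map to ${\bf 0}$ and ${\bf 1}$. Since $L/\!\!\equiv $ is a frame by Proposition~\ref{frame}, transporting its frame structure back along this isomorphism shows $(R(L),\dotvee ,\wedge ,\rho (0),1)$ is a frame; concretely, arbitrary joins in $R(L)$ are given by $\stackrel{\bullet }{\bigvee }$ and complete distributivity of $\wedge $ over $\stackrel{\bullet }{\bigvee }$ follows from the corresponding identity in $L/\!\!\equiv $ via Remark~\ref{onthecg}.

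For part~(\ref{framerho2}), I work inside the frame $(R(L),\wedge )$, whose commutator is $\wedge $ by Remark~\ref{framecommlat}. The claim ${\rm Spec}_{R(L)}={\rm Spec}_L$ is the substantive point: by Remark~\ref{commmeet} the prime elements of this commutator lattice are the elements of ${\rm Mi}(R(L))\setminus \{1\}$, so I must identify meet--prime elements of $R(L)$ with prime elements of the original commutator lattice $L$. If $p\in {\rm Spec}_L$ then $p=\rho (p)\in R(L)$ by Remark~\ref{vrho}; given $r,s\in R(L)$ with $r\wedge s\leq p$, since $[r,s]\leq r\wedge s\leq p$ primeness in $L$ gives $r\leq p$ or $s\leq p$, so $p$ is meet--prime in $R(L)$ and hence $p\in {\rm Spec}_{R(L)}$. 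Conversely, if $p\in {\rm Spec}_{R(L)}$, so $p\in R(L)\setminus \{1\}$ is meet--prime in $R(L)$, then for $a,b\in L$ with $[a,b]\leq p$ we get $\rho (a)\wedge \rho (b)=\rho (a\wedge b)=\rho ([a,b])\leq \rho (p)=p$ (Remark~\ref{onthecg}), hence $\rho (a)\leq p$ or $\rho (b)\leq p$, hence $a\leq p$ or $b\leq p$; thus $p\in {\rm Spec}_L$. With ${\rm Spec}_{R(L)}={\rm Spec}_L$ and the fact that $R(L)$ is closed under arbitrary meets of families of radical elements (indeed, in an algebraic/frame setting meets are computed pointwise and stay in $R(L)$ since $\rho $ is a closure operator, cf.\ Remark~\ref{vrho}), the radical of any $r\in R(L)$ in this new commutator lattice is $\bigwedge \{p\in {\rm Spec}_L\ |\ r\leq p\}=\rho (r)=r$, so $R(R(L))=R(L)$; in particular $\rho (0)\in R(R(L))$.

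Part~(\ref{framerho3}) is then obtained by transporting part~(\ref{framerho2}) along the isomorphism of part~(\ref{framerho1}), or directly: $(L/\!\!\equiv ,\wedge )$ is a commutator lattice with commutator $\wedge $ since $L/\!\!\equiv $ is a frame (Remarks~\ref{framecommlat}, \ref{quoequiv}), so again by Remark~\ref{commmeet} its prime elements are ${\rm Mi}(L/\!\!\equiv )\setminus \{{\bf 1}\}$. Under the isomorphism $r\mapsto r/\!\!\equiv $, the meet--irreducible elements of $L/\!\!\equiv $ correspond exactly to the meet--irreducible elements of $R(L)$, which by part~(\ref{framerho2}) are precisely ${\rm Spec}_L$; and $p/\!\!\equiv =\rho (0)/\!\!\equiv ={\bf 0}$ is excluded iff $p\not\leq $ nothing, i.e.\ one must be careful that a prime $p$ of $L$ with $p\in V(\rho (0))$ (equivalently $\rho (0)\leq p$, which holds automatically since $0\leq p$) gives $p/\!\!\equiv \neq {\bf 1}$ because $1/\!\!\equiv =\{1\}$ need not hold in general, but $p\neq 1$ forces $p/\!\!\equiv \neq {\bf 1}$ as $\max (p/\!\!\equiv )=\rho (p)=p\neq 1=\max (1/\!\!\equiv )$. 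Hence ${\rm Spec}_{L/\equiv }=\{p/\!\!\equiv \ |\ p\in V(\rho (0))\}$, and since the frame $L/\!\!\equiv $ is a distributive lattice with $[\cdot ,\cdot ]_{\equiv }=\wedge $ and (being a frame, hence algebraic in the relevant sense, or directly by Remark~\ref{commmeet} and closure under meets) every element is a meet of such primes, $R(L/\!\!\equiv )=L/\!\!\equiv $; in particular $\rho ({\bf 0})={\bf 0}$.

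The main obstacle is the bookkeeping in part~(\ref{framerho2}): matching ${\rm Mi}(R(L))$ against ${\rm Spec}_L$ requires repeated use of $\rho (a\wedge b)=\rho ([a,b])=\rho (a)\wedge \rho (b)$ from Remark~\ref{onthecg}, together with the order--reflecting property ``$a\leq p$ iff $\rho (a)\leq p$'' for $p\in {\rm Spec}_L$, and one must take care that meets in $R(L)$ coincide with meets in $L$ (they do, since $R(L)$ is the closure system of $\rho $) so that ``meet--irreducible in $R(L)$'' is the right notion to compare with primeness in $L$.
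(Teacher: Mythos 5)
Your proposal is correct and takes essentially the same route as the paper: you build the isomorphism between $R(L)$ and $L/\!\!\equiv$ by checking that $r\mapsto r/\!\!\equiv$ is a bijective lattice morphism (the paper equivalently applies the homomorphism theorem to $x\mapsto \rho (x)$), import the frame property from Proposition \ref{frame}, prove ${\rm Spec}_{R(L)}={\rm Spec}_L$ and $R(R(L))=R(L)$ by the same $\rho$-computations, and obtain part (\ref{framerho3}) by transport along the isomorphism, exactly as the paper does. The only blemish is the parenthetical ``being a frame, hence algebraic in the relevant sense'' in your optional direct argument for $R(L/\!\!\equiv )=L/\!\!\equiv$ — a frame need not be algebraic nor have every element a meet of primes, so that phrase should be dropped — but since your primary justification is the transport argument, this does not affect correctness.
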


\begin{proof} (\ref{framerho1}) $1=\rho (1)\in R(L)$ and, for all $a,b\in L$, $\rho (a)\wedge \rho (b)=\rho (a\wedge b)\in R(L)$, and $\rho (a)\dotvee \rho (b)=\rho (\rho (a)\vee \rho (b))=\rho (a\vee b)$, from which it is straightforward that $(R(L),\dotvee ,\wedge ,\rho (0),1)$ is a bounded lattice.

Let $f:L\rightarrow R(L)$, for all $x\in L$, $f(x)=\rho (x)$. Then $f$ is surjective and, by the above, for all $a,b\in L$, $f(a\wedge b)=f(a)\wedge f(b)$ and $f(a\vee b)=f(a)\dotvee f(b)$, hence $f$ is a surjective lattice morphism. By the definition of $\equiv $, ${\rm Ker}(f)=\ \equiv $. Hence the Main Isomorphism Theorem gives us a lattice isomorphism $h:L/\!\! \equiv \ \rightarrow R(L)$, defined by $h(x/\!\! \equiv )=\rho (x)$ for all $x\in L$. By Proposition \ref{frame}, it follows that $R(L)$ is a frame and $h$ is a frame isomorphism.

\noindent (\ref{framerho2}) Since $(R(L),\dotvee ,\wedge ,\rho (0),1)$ is a frame by (\ref{framerho1}), $(R(L),\dotvee ,\wedge ,\wedge ,\rho (0),1)$ is a commutator lattice. ${\rm Spec}_L\subseteq R(L)=\{\rho (u)\ |\ u\in L\}$, and, for any $a,b,x\in L$: $\rho (a)\wedge \rho (b)\leq \rho (x)$ iff $\rho (a)\leq \rho (x)$ and $\rho (b)\leq \rho (x)$ iff $a\leq \rho (x)$ and $b\leq \rho (x)$ iff $a\wedge b\leq \rho (x)$ iff $\rho ([a,b])=\rho (a\wedge b)\leq \rho (x)$ iff $[a,b]\leq \rho (x)$, hence: $\rho (x)\in {\rm Spec}_{R(L)}$ iff $\rho (x)\in {\rm Spec}_L$, therefore ${\rm Spec}_L={\rm Spec}_{R(L)}$. Thus, in $R(L)$, for any $x\in L$, the radical of $\rho (x)$ is $\displaystyle \bigwedge \{p\in {\rm Spec}_{R(L)}\ |\ \rho (x)\leq p\}=\bigwedge \{p\in {\rm Spec}_L\ |\ \rho (x)\leq p\}=\rho (\rho (x))=\rho (x)$, which means that every element of the commutator lattice $R(L)$ is a radical element, in particular the first element of this lattice, $\rho (0)$, is a radical element.

\noindent (\ref{framerho3}) By (\ref{framerho1}) and (\ref{framerho2}) and the definition of the frame isomorphism $h$.\end{proof}

\begin{remark} By Remarks \ref{inclann} and \ref{0rho}, we have ${\rm Ann}_L(U)/\!\!\equiv \ \subseteq {\rm Ann}_{L/\equiv }(U/\!\!\equiv )$ for any $U\subseteq L$ and, if $\rho (0)=0$, then the properties of Lemma \ref{anntheta} hold for $M$ and $\theta $ replaced by $L$ and $\equiv $, respectively.\end{remark}

For all $x,y\in L$, let us define $\displaystyle x\rightarrow y=\bigvee \{a\in L\ |\ [x,a]\leq y\}$ and $\displaystyle \neg \, x=x\rightarrow 0=\bigvee \{a\in L\ |\ [x,a]=0\}$.

\begin{remark} Let $x,y\in L$. By Remark \ref{maxcomm}, $\displaystyle x\rightarrow y=\max \{a\in L\ |\ [x,a]\leq y\}$ and $\displaystyle \neg \, x=\max \{a\in L\ |\ [x,a]=0\}$, so that, by Lemma \ref{suchrho}, (\ref{suchrho3}), if $\rho (0)=0$, then $\displaystyle \neg \, x=\bigvee _{a\in {\rm Ann}_L(x)}a=\max ({\rm Ann}_L(x))$.\label{negann}\end{remark}

\begin{lemma} If $(L,[\cdot ,\cdot ])$ is a commutator lattice, then, for all $x,y,z\in L$:\begin{enumerate}
\item\label{residuation1} $[x,y]\leq z$ iff $x\leq y\rightarrow z$;
\item\label{residuation2} if $[y,1]=y$, then: $y\rightarrow z=1$ iff $y\leq z$.
\end{enumerate}\label{residuation}\end{lemma}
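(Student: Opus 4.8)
The plan is to prove the two residuation statements directly from the definitions of $\rightarrow$ and the completeness/distributivity axioms of a commutator lattice, using Remark \ref{maxcomm} to replace the join in the definition of $x \rightarrow y$ by a maximum. For part (\ref{residuation1}), first I would unwind the definition: $y \rightarrow z = \bigvee\{a \in L \mid [y,a] \leq z\}$, and by Remark \ref{maxcomm} (applied with the roles of the two arguments of $[\cdot,\cdot]$ suitably named, using commutativity of the commutator) this join is actually attained, so $y \rightarrow z = \max\{a \in L \mid [y,a] \leq z\}$. Then the forward direction is immediate: if $[x,y] \leq z$, then $x$ itself lies in the set $\{a \mid [y,a] \leq z\}$ (using $[x,y]=[y,x]$), hence $x \leq \max$ of that set $= y \rightarrow z$. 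For the converse, suppose $x \leq y \rightarrow z$. Since $[\cdot,\cdot]$ is order-preserving in each argument (which follows from complete distributivity over joins, as noted after Definition \ref{defcommlat} and used throughout), we get $[x,y] \leq [y \rightarrow z, y] = [y, y \rightarrow z] \leq z$, the last inequality because $y \rightarrow z$ belongs to the set over which we took the maximum. That closes part (\ref{residuation1}).

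For part (\ref{residuation2}), assume $[y,1]=y$. The direction "$y \leq z$ implies $y \rightarrow z = 1$" is easy: if $y \leq z$, then $[y,1] = y \leq z$, so $1 \in \{a \mid [y,a]\leq z\}$, and since $1$ is the top element, $y \rightarrow z = 1$. Conversely, if $y \rightarrow z = 1$, then by part (\ref{residuation1}) applied with $x = 1$ we have $[1,y] \leq z$; but $[1,y] = [y,1] = y$, so $y \leq z$.

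The only step requiring a little care is the invocation of Remark \ref{maxcomm} to legitimize writing $x \rightarrow y$ as a maximum: one must match the set $\{a \in L \mid [x,a]\leq y\}$ appearing in the definition of $x \rightarrow y$ with the set $M = \{a \in L \mid [a,x]\leq y\}$ of that remark, which is harmless by commutativity of the commutator. I expect no genuine obstacle here — everything is a routine consequence of monotonicity of $[\cdot,\cdot]$ and the fact that the relevant join is attained; the hypothesis $[y,1]=y$ in part (\ref{residuation2}) is exactly what is needed to identify $[1,y]$ with $y$, and without it only one implication would survive. No frame or Stone machinery is needed for this lemma.
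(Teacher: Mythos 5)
Your proposal is correct and follows essentially the same route as the paper: use Remark \ref{maxcomm} (via commutativity of $[\cdot,\cdot]$) to see that $y\rightarrow z=\max\{a\in L\mid [y,a]\leq z\}$, from which both directions of (\ref{residuation1}) follow (the converse via monotonicity of the commutator, which the paper leaves implicit), and then deduce (\ref{residuation2}) by applying (\ref{residuation1}) with $x=1$ together with $[1,y]=y$.
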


\begin{proof} (\ref{residuation1}) $y\rightarrow z=\max \{a\in L\ |\ [a,y]\leq z\}$, so both implications hold.

\noindent (\ref{residuation2}) If $[y,1]=y$, then, by (\ref{residuation1}): $y\rightarrow z=1$ iff $1\leq y\rightarrow z$ iff $y=[1,y]\leq z$.\end{proof}

\begin{lemma} If $(L,[\cdot ,\cdot ])$ is a commutator lattice, then, for all $e\in {\cal B}(L)$ and all $a\in L$ such that $[1,e\wedge a]=e\wedge a$, we have $e\wedge a=[e,a]$.\label{meetbool}\end{lemma}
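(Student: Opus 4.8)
The plan is to prove only the nontrivial inequality $e\wedge a\leq [e,a]$, since $[e,a]\leq e\wedge a$ holds automatically because $[\cdot,\cdot]$ is a commutator (Definition \ref{defcommlat}). The whole argument rests on exploiting the complement of $e$ together with complete distributivity of the commutator over joins.

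First I would fix a complement $f\in L$ of $e$, i.e.\ $e\vee f=1$ and $e\wedge f=0$, which exists since $e\in{\cal B}(L)$. Applying complete join--distributivity of $[\cdot,\cdot]$ to $1=e\vee f$ gives
\[
[1,e\wedge a]=[e\vee f,e\wedge a]=[e,e\wedge a]\vee [f,e\wedge a].
\]
The second term vanishes: $[f,e\wedge a]\leq f\wedge (e\wedge a)\leq f\wedge e=0$, so $[f,e\wedge a]=0$ and hence $[1,e\wedge a]=[e,e\wedge a]$. Combining this with the hypothesis $[1,e\wedge a]=e\wedge a$ yields $e\wedge a=[e,e\wedge a]$.

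Finally, since $e\wedge a\leq a$ and $[\cdot,\cdot]$ is monotone in each argument (an immediate consequence of its complete distributivity over the join, applied to a two--element family), we get $[e,e\wedge a]\leq [e,a]$. Chaining the inequalities,
\[
e\wedge a=[e,e\wedge a]\leq [e,a]\leq e\wedge a,
\]
so $e\wedge a=[e,a]$, as desired. I do not anticipate a genuine obstacle here; the only points demanding a little care are to use a complement of $e$ rather than any distributivity of $L$ itself, and to note explicitly that monotonicity of the commutator is formal from Definition \ref{defcommlat}.
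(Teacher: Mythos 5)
Your proof is correct and follows essentially the same route as the paper: decompose $1=e\vee f$ via a complement $f$ of $e$, use join--distributivity of the commutator to kill the term $[f,e\wedge a]\leq f\wedge e=0$, and conclude $e\wedge a=[1,e\wedge a]=[e,e\wedge a]\leq [e,a]\leq e\wedge a$. Your explicit remark that monotonicity of $[\cdot,\cdot]$ is a formal consequence of join--distributivity is a fine (and accurate) addition, but the argument is otherwise the paper's own.
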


\begin{proof} Since $e\in {\cal B}(L)$, we have $e\vee f=1$ and $e\wedge f=0$ for some $f\in L$. Then $[e,a]\leq e\wedge a=[1,e\wedge a]=[e\vee f,e\wedge a]=[e,e\wedge a]\vee [f,e\wedge a]\leq [e,e\wedge a]\vee (f\wedge e\wedge a)=[e,e\wedge a]\vee 0=[e,e\wedge a]\leq [e,a]$, hence: $e\wedge a=[e,a]$.\end{proof}

\begin{proposition} If $(L,[\cdot ,\cdot ])$ is a commutator lattice such that $[x,1]=x$ for all $x\in L$, then $e\wedge a=[e,a]$ for all $e\in {\cal B}(L)$ and all $a\in L$, in particular $[\cdot ,\cdot ]=\wedge $ in ${\cal B}(L)$, and ${\cal B}(L)$ is a Boolean sublattice of $L$ whose complementation is defined by $\neg \, e=e\rightarrow 0=\max \{a\in L\ |\ [e,a]=0\}=\max ({\rm Ann}_L(e))$ for all $e\in {\cal B}(L)$.\label{boolcenter}\end{proposition}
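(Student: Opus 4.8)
The plan is to derive all four assertions from Lemma~\ref{meetbool} together with the join--bilinearity of the commutator. First, for the opening claim, I would fix $e\in{\cal B}(L)$ and $a\in L$: applying the hypothesis to $x=e\wedge a$ gives $[1,e\wedge a]=[e\wedge a,1]=e\wedge a$, so Lemma~\ref{meetbool} yields $e\wedge a=[e,a]$. Taking $a=f\in{\cal B}(L)$ then shows $[\cdot,\cdot]=\wedge$ on ${\cal B}(L)$. Two consequences I will use repeatedly: every $e\in{\cal B}(L)$ is a distributive element of $L$, since $e\wedge\bigvee_{i\in I}y_i=[e,\bigvee_{i\in I}y_i]=\bigvee_{i\in I}[e,y_i]=\bigvee_{i\in I}(e\wedge y_i)$; and $[1,1]=1$ (the hypothesis at $x=1$).

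Next I would check that ${\cal B}(L)$ is a $\{0,1\}$--sublattice of $L$. Clearly $0,1\in{\cal B}(L)$. Let $e,f\in{\cal B}(L)$ with complements $\neg e,\neg f$. For the meet, $\neg e\vee\neg f$ complements $e\wedge f$: on the one hand $(e\wedge f)\wedge(\neg e\vee\neg f)=f\wedge\bigl([e,\neg e]\vee[e,\neg f]\bigr)=f\wedge(e\wedge\neg f)=0$ using that $e$ is distributive, and on the other hand $1=[1,1]=[e\vee\neg e,f\vee\neg f]=[e,f]\vee[e,\neg f]\vee[\neg e,f]\vee[\neg e,\neg f]\leq(e\wedge f)\vee\neg f\vee\neg e\vee\neg e=(e\wedge f)\vee\neg e\vee\neg f$, where I used $[e,f]=e\wedge f$ and $[x,y]\leq x\wedge y$. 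For the join, $\neg e\wedge\neg f\in{\cal B}(L)$ by the meet case, and it complements $e\vee f$: indeed $(e\vee f)\wedge(\neg e\wedge\neg f)=\bigl([\neg e,e]\vee[\neg e,f]\bigr)\wedge\neg f=(\neg e\wedge f)\wedge\neg f=0$ (distributivity of $\neg e$), while $1=e\vee\neg e=e\vee(\neg e\wedge f)\vee(\neg e\wedge\neg f)\leq(e\vee f)\vee(\neg e\wedge\neg f)$ via $\neg e=\neg e\wedge(f\vee\neg f)=(\neg e\wedge f)\vee(\neg e\wedge\neg f)$. Thus ${\cal B}(L)$ is closed under $\wedge$ and $\vee$, with $\neg(e\wedge f)=\neg e\vee\neg f$ and $\neg(e\vee f)=\neg e\wedge\neg f$; being a bounded sublattice in which every element distributes over binary joins (that is, satisfying the distributive law) and in which every element is complemented, ${\cal B}(L)$ is a distributive complemented bounded sublattice, hence a Boolean sublattice of $L$; in particular complements in ${\cal B}(L)$ are unique.

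Finally, for the description of the complementation, I would fix $e\in{\cal B}(L)$ and set $m=e\rightarrow0$. By Remark~\ref{maxcomm}, $m=\max\{a\in L\mid[e,a]=0\}$, and since $[e,a]=e\wedge a$ this set equals ${\rm Ann}_L(e)$, so $m=\max({\rm Ann}_L(e))$. From $m\in{\rm Ann}_L(e)$ we get $e\wedge m=[e,m]=0$; moreover the Boolean complement $\neg e$ lies in ${\rm Ann}_L(e)$, hence $\neg e\leq m$ and $e\vee m\geq e\vee\neg e=1$, so $m$ is a complement of $e$. By uniqueness of complements in ${\cal B}(L)$ we conclude $m=\neg e$, i.e.\ $\neg e=e\rightarrow0=\max\{a\in L\mid[e,a]=0\}=\max({\rm Ann}_L(e))$. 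I expect the only mildly delicate points to be the two ``$=1$'' verifications inside the closure step, which is why I route one of them through the bilinear expansion of $[1,1]=1$ and the other through the distributivity of $\neg e$ over $f\vee\neg f$.
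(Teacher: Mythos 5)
Your proposal is correct and follows essentially the same route as the paper's proof: both rest on Lemma~\ref{meetbool} to get $e\wedge a=[e,a]$, use the join--distributivity of the commutator to prove that ${\cal B}(L)$ is a (distributive, hence Boolean) bounded sublattice, and identify the complement with $e\rightarrow 0=\max({\rm Ann}_L(e))$ via Remark~\ref{maxcomm} and uniqueness of complements. The only difference is bookkeeping in the closure step (you verify closure under $\wedge$ and $\vee$ through De Morgan--style complements, while the paper shows $x\vee y$ and $\overline{x}\wedge\overline{y}$ are mutual complements and concludes by symmetry), which does not change the substance of the argument.
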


\begin{proof} By Lemma \ref{meetbool}, $e\wedge a=[e,a]$ for all $e\in {\cal B}(L)$ and all $a\in L$, in particular for all $e,a\in {\cal B}(L)$.

Obviously, $0,1\in {\cal B}(L)$. Now let $x,y\in {\cal B}(L)$, so that $x\vee \overline{x}=y\vee \overline{y}=1$ and $x\vee \overline{x}=y\vee \overline{y}=1$ and $x\wedge \overline{x}=y\wedge \overline{y}=0$ for some $\overline{x},\overline{y}\in {\cal B}(L)$, so that $[x,\overline{x}]=[y,\overline{y}]=0$, as well. Then $[x\vee y,\overline{x}\wedge \overline{y}]=[x,\overline{x}\wedge \overline{y}]\vee [y,\overline{x}\wedge \overline{y}]\leq [x,\overline{x}]\vee [y,\overline{y}]=0$, so $[x\vee y,\overline{x}\wedge \overline{y}]=0$. By Lemma \ref{meetbool}, $[x,x\vee y]=x\wedge (x\vee y)=x$, hence $x\vee y\vee (\overline{x}\wedge \overline{y})=x\vee y\vee [\overline{x},\overline{y}]=[1,x\vee y]\vee [\overline{x},\overline{y}]=[x\vee \overline{x},x\vee y]\vee [\overline{x},\overline{y}]=[x,x\vee y]\vee [\overline{x},x\vee y]\vee [\overline{x},\overline{y}]=x\vee [\overline{x},x\vee y\vee \overline{y}]=x\vee [\overline{x},1]=x\vee \overline{x}=1$, hence $x\vee y\in {\cal B}(L)$ and $\overline{x}\wedge \overline{y}\in {\cal B}(L)$, thus also $x\wedge y\in {\cal B}(L)$, since we can interchange $x$ and $\overline{x}$, respectively $y$ and $\overline{y}$ in the above.

Therefore ${\cal B}(L)$ is a bounded sublattice of $L$ in which the meet coincides to $[\cdot ,\cdot ]$, thus ${\cal B}(L)$ is a bounded distributive lattice, and it is clearly complemented, so ${\cal B}(L)$ is a Boolean sublattice of $L$. Let $e\in {\cal B}(L)$. As in every Boolean algebra, the complement of $e$ in ${\cal B}(L)$ is $\max \{a\in {\cal B}(L)\ |\ e\wedge a=0\}\leq \max \{a\in L\ |\ e\wedge a=0\}=\max \{a\in L\ |\ [e,a]=0\}=\neg \, e$, hence $e\vee \neg \, e=1$. But, by the above, $\neg \, e=\max ({\rm Ann}_L(e))\in {\rm Ann}_L(e)$, thus $e\wedge \neg \, e=0$. Hence $\neg \, e\in {\cal B}(L)$ and $\neg \, e$ is the complement of $e$ in ${\cal B}(L)$.\end{proof}

\begin{remark} If $[1,1]=x<1$, then no $y\in [x)_L$ can be prime, thus $\displaystyle \rho (x)=\bigwedge \emptyset =1=\rho (1)$, hence $1\neq x\in 1/\!\!\equiv $. Therefore $1/\!\!\equiv \ =\{1\}$ implies $[1,1]=1$.\label{1comm}\end{remark}

\begin{proposition} If $(L,[\cdot ,\cdot ])$ is a commutator lattice such that $\rho (0)=0$, then:\begin{enumerate}
\item\label{charstone1} for all $x\in L$, ${\rm Ann}_L(x)=(\neg \, x]_L$;
\item\label{charstone2} $L$ is a Stone lattice iff, for all $x\in L$, $\neg \, x\in {\cal B}(L)$;
\item\label{charstone3} if $[x,1]=x$ for all $x\in L$, then, for all $e\in {\cal B}(L)$, $(e]_L={\rm Ann}_L(\neg \, e)\in {\rm PAnn}(L)$.\end{enumerate}\label{charstone}\end{proposition}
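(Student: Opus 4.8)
The plan is to exploit the hypothesis $\rho(0)=0$, which by Lemma \ref{annid} makes $L$ behave like a frame as far as annihilators are concerned: every annihilator is a principal ideal, and by Remark \ref{negann} we have $\neg\, x=\max({\rm Ann}_L(x))$ for every $x\in L$. So for part (\ref{charstone1}) I would argue directly: since $\rho(0)=0$, Lemma \ref{suchrho}, (\ref{suchrho3}), identifies ${\rm Ann}_L(x)$ with $\{a\in L\ |\ [x,a]=0\}$, and Remark \ref{negann} gives that this set has maximum $\neg\, x$; moreover Lemma \ref{annid} tells us ${\rm Ann}_L(x)\in{\rm PId}(L)$, so ${\rm Ann}_L(x)=(\max({\rm Ann}_L(x))]_L=(\neg\, x]_L$. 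That is the whole of (\ref{charstone1}).

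For part (\ref{charstone2}), I would use (\ref{charstone1}) together with Proposition \ref{boolidtheta}, or more directly the definition of a Stone lattice. If $\neg\, x\in{\cal B}(L)$ for all $x$, then ${\rm Ann}_L(x)=(\neg\, x]_L$ with $\neg\, x\in{\cal B}(L)$, which is exactly the Stone condition. Conversely, if $L$ is Stone, then for each $x$ there is $e\in{\cal B}(L)$ with ${\rm Ann}_L(x)=(e]_L$; comparing with (\ref{charstone1}) gives $(\neg\, x]_L=(e]_L$, whence $\neg\, x=e\in{\cal B}(L)$ (principal ideals determine their generators). The one point requiring a little care is that $(e]_L=(e']_L$ implies $e=e'$, which is immediate since $e$ is the maximum of $(e]_L$.

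For part (\ref{charstone3}), assume additionally $[x,1]=x$ for all $x\in L$. Then Proposition \ref{boolcenter} applies: ${\cal B}(L)$ is a Boolean sublattice of $L$ and for $e\in{\cal B}(L)$ we have $\neg\, e=\max({\rm Ann}_L(e))$, with $\neg\, e\in{\cal B}(L)$ the genuine Boolean complement. I would then compute ${\rm Ann}_L(\neg\, e)$: by (\ref{charstone1}), ${\rm Ann}_L(\neg\, e)=(\neg\,\neg\, e]_L$, and since $\neg\,\neg\, e=e$ in the Boolean algebra ${\cal B}(L)$ (double complement), this gives ${\rm Ann}_L(\neg\, e)=(e]_L$, so $(e]_L\in{\rm PAnn}(L)$. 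Alternatively, one can observe directly from Proposition \ref{boolcenter} that $e\wedge a=[e,a]$, so $a\in{\rm Ann}_L(\neg\, e)$ iff $a\wedge\neg\, e=0$ iff $a\leq e$ (the last equivalence in the Boolean algebra, using that ${\cal B}(L)$ is closed under the relevant meets), giving ${\rm Ann}_L(\neg\, e)=(e]_L$ again.

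The only mild obstacle I anticipate is making sure in (\ref{charstone3}) that the equivalence ``$a\wedge\neg\, e=0$ iff $a\leq e$'' is used correctly: the left-to-right direction needs $\neg\, e$ to be a true complement and $e$ to absorb everything disjoint from $\neg\, e$, which is exactly what $\neg\,\neg\, e=e$ and part (\ref{charstone1}) provide. Everything else is a routine chain of the already-established facts (Lemma \ref{annid}, Remark \ref{negann}, Proposition \ref{boolcenter}), so the proof should be short.
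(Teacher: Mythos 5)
Your proposal is correct and follows essentially the same route as the paper: part (\ref{charstone1}) from Lemma \ref{annid} together with Remark \ref{negann} (which already rests on Lemma \ref{suchrho}), part (\ref{charstone2}) from (\ref{charstone1}) and the definition of a Stone lattice, and part (\ref{charstone3}) via Proposition \ref{boolcenter} and the computation $(e]_L=(\neg\,\neg\, e]_L={\rm Ann}_L(\neg\, e)$. Your extra remark that $\neg\,\neg\, e=e$ needs the uniqueness of complements in the Boolean algebra ${\cal B}(L)$ is exactly the point implicit in the paper's one-line argument.
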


\begin{proof} (\ref{charstone1}) By Proposition \ref{annid} and Remark \ref{negann}.

\noindent (\ref{charstone2}) By (\ref{charstone1}) and the definition of a Stone lattice.

\noindent (\ref{charstone3}) By (\ref{charstone1}) and Proposition \ref{boolcenter}, for all $e\in {\cal B}(L)$, $(e]_L=(\neg \, \neg \, e]_L={\rm Ann}_L(\neg \, e)$\linebreak $\in {\rm PAnn}(L)$.\end{proof}

\begin{proposition} If $(L,[\cdot ,\cdot ])$ is a commutator lattice such that $\rho (0)=0$ and $[x,1]=x$ for all $x\in L$, then:\begin{itemize}
\item ${\cal B}({\rm Id}(L))=\{(e]_L\ |\ e\in {\cal B}(L)\}\subseteq {\rm PAnn}(L)\subseteq {\cal A}nn(L)$;
\item $L$ is a Stone lattice iff ${\rm PAnn}(L)={\cal B}({\rm Id}(L))$;
\item $L$ is a strongly Stone lattice iff ${\cal A}nn(L)={\cal B}({\rm Id}(L))$.\end{itemize}\label{boolid}\end{proposition}

\begin{proof} By Proposition \ref{boolidtheta}, (\ref{boolidtheta2}), and Proposition \ref{charstone}, (\ref{charstone3}).\end{proof}

Let us see, in the following proposition, some side results on compact elements.

\begin{proposition} Let $(L,[\cdot ,\cdot ])$ be a commutator lattice and $\equiv \ =\{(a,b)\in L^2\ |\ \rho (a)=\rho (b)\}$.\begin{enumerate}
\item\label{boolcp1} If $1\in {\rm Cp}(L)$, then $\{x\in {\cal B}(L)\ |\ [x,1]=x\}\subseteq {\rm Cp}(L)$.
\item\label{boolcp2} If $1\in {\rm Cp}(L)$ and $[x,1]=x$ for all $x\in {\cal B}(L)$, then ${\cal B}(L)\subseteq {\rm Cp}(L)$.
\item\label{boolcp0} If $1\in {\rm Cp}(L)$ and $1/\!\! \equiv \ =\{1\}$, then $1/\!\!\equiv \in {\rm Cp}(L/\!\! \equiv )$, ${\cal B}(L/\!\! \equiv )\subseteq {\rm Cp}(L/\!\! \equiv )$ and, in $L/\!\! \equiv $, $V(x/\!\! \equiv )\neq \emptyset $ for all $x\in L\setminus \{1\}$.\end{enumerate}\label{boolcp}\end{proposition}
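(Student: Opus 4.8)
The plan is to prove the three items in turn: item (\ref{boolcp1}) by a direct computation valid in any commutator lattice, item (\ref{boolcp2}) as an immediate consequence, and item (\ref{boolcp0}) by applying (\ref{boolcp2}) together with Lemma \ref{smimaxspec} to the commutator lattice $L/\!\!\equiv $. For (\ref{boolcp1}), fix $x\in {\cal B}(L)$ with $[x,1]=x$ and pick $t\in L$ with $x\vee t=1$ and $x\wedge t=0$. Given a family $(y_i)_{i\in I}\subseteq L$ with $x\leq \bigvee _{i\in I}y_i$, I would note that $1=x\vee t\leq (\bigvee _{i\in I}y_i)\vee t$, hence $1=(\bigvee _{i\in I}y_i)\vee t$; since $1\in {\rm Cp}(L)$, there is a finite $F\subseteq I$ with $1=(\bigvee _{i\in F}y_i)\vee t$ (after enlarging the finite subcover of $1$ so that it contains $t$ if necessary). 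Using $[x,1]=x$, the complete join--distributivity of $[\cdot ,\cdot ]$, and the inequalities $[x,y_i]\leq x\wedge y_i\leq y_i$ and $[x,t]\leq x\wedge t=0$, I obtain $x=[x,1]=[x,(\bigvee _{i\in F}y_i)\vee t]=(\bigvee _{i\in F}[x,y_i])\vee [x,t]\leq \bigvee _{i\in F}(x\wedge y_i)\leq \bigvee _{i\in F}y_i$, so $x\in {\rm Cp}(L)$.

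Item (\ref{boolcp2}) then follows at once, since under its hypotheses $\{x\in {\cal B}(L)\ |\ [x,1]=x\}={\cal B}(L)$, and (\ref{boolcp1}) gives ${\cal B}(L)\subseteq {\rm Cp}(L)$. For (\ref{boolcp0}), I would first show ${\bf 1}\in {\rm Cp}(L/\!\!\equiv )$: if ${\bf 1}\leq \bigvee _{i\in I}w_i$ in $L/\!\!\equiv $, write $w_i=x_i/\!\!\equiv $; since $\equiv $ preserves arbitrary joins by Proposition \ref{thecong}, (\ref{thecong1}), we get $(\bigvee _{i\in I}x_i)/\!\!\equiv =\bigvee _{i\in I}w_i={\bf 1}=1/\!\!\equiv $, whence $\bigvee _{i\in I}x_i\in 1/\!\!\equiv =\{1\}$, that is $\bigvee _{i\in I}x_i=1$; compactness of $1$ in $L$ yields a finite $F\subseteq I$ with $1=\bigvee _{i\in F}x_i$, so ${\bf 1}=\bigvee _{i\in F}w_i$.

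Now $L/\!\!\equiv $ is a commutator lattice (Remark \ref{quoequiv}) which is a frame (Proposition \ref{frame}), so $[\cdot ,\cdot ]_{\equiv }=\wedge $ in $L/\!\!\equiv $; in particular $[z,{\bf 1}]_{\equiv }=z\wedge {\bf 1}=z$ for all $z\in {\cal B}(L/\!\!\equiv )$, and since ${\bf 1}\in {\rm Cp}(L/\!\!\equiv )$ by the above, applying (\ref{boolcp2}) to $L/\!\!\equiv $ gives ${\cal B}(L/\!\!\equiv )\subseteq {\rm Cp}(L/\!\!\equiv )$. Finally, as $[{\bf 1},{\bf 1}]_{\equiv }={\bf 1}$ too, Lemma \ref{smimaxspec}, (\ref{smimaxspec4}), applied to the commutator lattice $L/\!\!\equiv $ provides, for each $v\in (L/\!\!\equiv )\setminus \{{\bf 1}\}$, a prime element of $L/\!\!\equiv $ above $v$; and for $x\in L\setminus \{1\}$ the hypothesis $1/\!\!\equiv =\{1\}$ forces $x/\!\!\equiv \neq 1/\!\!\equiv ={\bf 1}$, so $V(x/\!\!\equiv )\neq \emptyset $. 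I do not expect a genuine obstacle here; the points requiring care are that in (\ref{boolcp1}) one must be allowed to assume $t$ lies in the finite subcover of $1$, and that in (\ref{boolcp0}) the hypothesis $1/\!\!\equiv =\{1\}$ has to be invoked twice — first to turn $\bigvee _{i\in I}x_i\in 1/\!\!\equiv $ into the exact equality $\bigvee _{i\in I}x_i=1$ (so that compactness of $1$ in $L$ can be used), and again to guarantee $x/\!\!\equiv \neq {\bf 1}$ when $x\neq 1$.
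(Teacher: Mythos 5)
Your proposal is correct and follows essentially the same route as the paper: the same computation $x=[x,1]=[x,(\bigvee_{i\in F}y_i)\vee t]=(\bigvee_{i\in F}[x,y_i])\vee[x,t]\leq\bigvee_{i\in F}y_i$ for (\ref{boolcp1}), the same immediate deduction for (\ref{boolcp2}), and the same use of join–preservation of $\equiv$, the frame structure of $L/\!\!\equiv$ (so $[\cdot,\cdot]_{\equiv}=\wedge$), part (\ref{boolcp2}) and Lemma \ref{smimaxspec}, (\ref{smimaxspec4}), for (\ref{boolcp0}). The two points you flag as needing care (absorbing $t$ into the finite subcover, and invoking $1/\!\!\equiv\;=\{1\}$ twice) are handled exactly as in the paper's argument.
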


\begin{proof} (\ref{boolcp1}) Let $x\in {\cal B}(L)$, so that $x\vee y=1$ and $x\wedge y=0$ for some $y\in L$, so we also have $[x,y]=0$. Assume that $[x,1]=x$ and $1\in {\rm Cp}(L)$, and let $\emptyset \neq M\subseteq L$ such that $\displaystyle x\leq \bigvee M$, so that $\displaystyle 1=x\vee y\leq \bigvee M\vee y$, therefore, since $1\in {\rm Cp}(L)$, $\displaystyle 1=x\vee y=\bigvee _{i=1}^nx_i\vee y$ for some $n\in \N ^*$ and some $x_1,\ldots ,x_n\in M$. Then $\displaystyle x=[x,1]=[x,\bigvee _{i=1}^nx_i\vee y]=[x,\bigvee _{i=1}^nx_i]\vee [x,y]=[x,\bigvee _{i=1}^nx_i]\leq \bigvee _{i=1}^nx_i$, hence $\displaystyle x\leq \bigvee _{i=1}^nx_i$.

\noindent (\ref{boolcp2}) By (\ref{boolcp1}).

\noindent (\ref{boolcp0}) Let $\emptyset \neq M\subseteq L$ such that $\displaystyle (\bigvee _{x\in M}x)/\!\! \equiv \ =\bigvee _{x\in M}x/\!\! \equiv \ =1/\!\! \equiv $. If $1/\!\! \equiv \ =\{1\}$, then it follows that $\displaystyle \bigvee _{x\in M}x=1$. If, furthermore, $1\in {\rm Cp}(L)$, then we obtain that $\displaystyle 1=\bigvee _{i=1}^nx_i=1$ for some $n\in \N ^*$ and some $x_1,\ldots ,x_n\in M$, hence $\displaystyle 1/\!\! \equiv \ =(\bigvee _{i=1}^nx_i)/\!\! \equiv \ =\bigvee _{i=1}^nx_i/\!\! \equiv \ $, therefore $1/\!\!\equiv \ =1/\!\! \equiv \ \in {\rm Cp}(L/\!\! \equiv )$.

Since $L/\!\! \equiv $ is a commutator lattice with $[\cdot ,\cdot ]=\wedge $, $L/\!\! \equiv $ fulfills $[x/\!\! \equiv ,1/\!\!\equiv ]=x/\!\! \equiv \wedge \; 1/\!\!\equiv \ =x/\!\! \equiv $ for all $x\in L$. Now apply (\ref{boolcp2}) to obtain that ${\cal B}(L/\!\! \equiv )\subseteq {\rm Cp}(L/\!\! \equiv )$, and Lemma \ref{smimaxspec}, (\ref{smimaxspec4}), to obtain that, in $L/\!\! \equiv $, $V(x/\!\! \equiv )\neq \emptyset $ for all $x\in L\setminus 1/\!\! \equiv \ =L\setminus \{1\}$.\end{proof}

\begin{proposition} If $(L,[\cdot ,\cdot ])$ is a commutator lattice such that $[x,1]=x$ for all $x\in L$ and $\equiv \ =\{(a,b)\in L^2\ |\ \rho (a)=\rho (b)\}$, then:\begin{enumerate}
\item\label{boolmorph12} $p_{\equiv }\mid _{{\cal B}(L)}:{\cal B}(L)\rightarrow {\cal B}(L/\!\! \equiv )$ is a Boolean morphism, which is injective iff $1/\!\! \equiv \cap \ {\cal B}(L)=\{1\}$ iff $0/\!\! \equiv \cap \ {\cal B}(L)=\{0\}$; 
\item\label{boolmorph35} if $\rho (0)=0$ or $1/\!\! \equiv \; =\{1\}$, in particular if $\rho (0)=0$ or $1\in {\rm Cp}(L)$, then the Boolean morphism $p_{\equiv }\mid _{{\cal B}(L)}:{\cal B}(L)\rightarrow {\cal B}(L/\!\! \equiv )$ is injective;
\item\label{boolmorph46} if $\rho (0)=0$ and $1/\!\! \equiv \; =\{1\}$, in particular if $\rho (0)=0$ and $1\in {\rm Cp}(L)$, then:\begin{itemize}
\item for all $x\in L$: $x/\!\! \equiv \in {\cal B}(L/\!\! \equiv )$ iff $x\in {\cal B}(L)$;
\item $p_{\equiv }\mid _{{\cal B}(L)}:{\cal B}(L)\rightarrow {\cal B}(L/\!\! \equiv )$ is a Boolean isomorphism.\end{itemize}\end{enumerate}\label{boolmorph}\end{proposition}

\begin{proof} Assume that $[x,1]=x$ for all $x\in L$, so that ${\cal B}(L)$ is a Boolean sublattice of $L$ by Proposition \ref{boolcenter}.

\noindent (\ref{boolmorph12}) By the above and Remark \ref{booltheta23}.

\noindent (\ref{boolmorph35}) By (\ref{boolmorph12}) and Remark \ref{0rho}, with Lemma \ref{cls1cpct} for the particular case.

\noindent (\ref{boolmorph46}) By Lemma \ref{booltheta}, (\ref{booltheta5}) and Remark \ref{0rho}, with Lemma \ref{cls1cpct} for the particular case.\end{proof}

\begin{remark} By Proposition \ref{boolmorph}, 
(\ref{boolmorph46}), if $1/\!\! \equiv \ =\{1\}$ and $\rho (0)=0$, then $\equiv $ has the BLP (see \cite{cblp}).\end{remark}

\begin{proposition} Let $(L,[\cdot ,\cdot ])$ be a commutator lattice such that $\rho (0)=0$ and $[x,1]=x$ for all $x\in L$, $\equiv \ =\{(a,b)\in L^2\ |\ \rho (a)=\rho (b)\}$, $U\subseteq L$, $a\in L$ and $e\in {\cal B}(L)$. Then:\begin{enumerate}
\item\label{boollambda12} $a/\!\! \equiv \ \leq e/\!\! \equiv $ iff $a\leq e$; $e=\max (e/\!\! \equiv )=\rho (e)$; ${\cal B}(L)\subseteq R(L)$;
\item\label{boollambda0} if $L$ is complemented, then $R(L)=L$, $[\cdot ,\cdot ]=\wedge $ in $L$ and $L$ is a complete Boolean algebra;
\item\label{boollambda45} $(\rho (a)]_L/\!\! \equiv \ ={\rm Ann}_L(U)/\!\! \equiv $ iff $(\rho (a)]_L={\rm Ann}_L(U)$; $(e]_L/\!\! \equiv \ ={\rm Ann}_L(U)/\!\! \equiv $ iff $(e]_L={\rm Ann}_L(U)$.\end{enumerate}\label{boollambda}\end{proposition}

\begin{proof} (\ref{boollambda12}) By Lemma \ref{compactata}, (\ref{justnoticed2}), Proposition \ref{boolcenter}, Proposition \ref{thecong}, (\ref{thecong1}), and Remark \ref{emaxcls}, $e=\max (e/\!\! \equiv )=\rho (e)\in R(L)$, so that ${\cal B}(L)\subseteq R(L)$ and $a/\!\! \equiv \ \leq e/\!\! \equiv $ iff $a\leq e$.

\noindent (\ref{boollambda0}) By (\ref{boollambda12}), if $L$ is complemented, then $L={\cal B}(L)\subseteq R(L)\subseteq L$, so that $L={\cal B}(L)=R(L)$, thus $L$ is a complete Boolean algebra and has $[\cdot ,\cdot ]=\wedge $ by Proposition \ref{boolcenter} and the fact that $L$ is a complete lattice.

\noindent (\ref{boollambda45}) By (\ref{boollambda12}), Lemma \ref{compactata}, (\ref{bmaxth13}), and Proposition \ref{thecong}, (\ref{thecong1}).\end{proof}

\begin{proposition} If $(L,[\cdot ,\cdot ])$ is a commutator lattice such that $\rho (0)=0$ and $\equiv \ =\{(a,b)\in L^2\ |\ \rho (a)=\rho (b)\}$, then, for any cardinality $\kappa $:\begin{enumerate}
\item\label{echiv(i)1} $(1)_{\kappa ,L}$ implies $(1)_{\kappa ,L/\equiv }$ (that is, if $L$ is $\kappa $--Stone, then $L/\!\!\equiv $ is $\kappa $--Stone);
\item\label{echiv(i)2} if $1/\!\!\equiv \  =\{1\}$ and $[x,1]=x$ for all $x\in L$, in particular if $1\in {\rm Cp}(L)$ and $[x,1]=x$ for all $x\in L$, then properties $(1)_{\kappa ,L}$ and $(1)_{\kappa ,L/\equiv }$ are equivalent (that is $L$ is $\kappa $--Stone iff $L/\!\!\equiv $ is $\kappa $--Stone).\end{enumerate}\label{echiv(i)}\end{proposition}

\begin{proof} (\ref{echiv(i)1}) By Remark \ref{0rho} and Proposition \ref{1theta}, (\ref{1theta1}).

\noindent (\ref{echiv(i)2}) By Remark \ref{0rho}, Proposition \ref{boollambda}, (\ref{boollambda12}), and Proposition \ref{1theta}, (\ref{1theta2}), with Lemma \ref{cls1cpct} for the particular case.\end{proof}

\begin{corollary} Let $(L,[\cdot ,\cdot ])$ be a commutator lattice such that $\rho (0)=0$ and $\equiv \ =\{(a,b)\in L^2\ |\ \rho (a)=\rho (b)\}$.\begin{itemize}
\item If $L$ is Stone, respectively strongly Stone, then $L/\!\!\equiv $ is Stone, respectively strongly Stone.
\item if $1/\!\!\equiv \  =\{1\}$ and $[x,1]=x$ for all $x\in L$, in particular if $1\in {\rm Cp}(L)$ and $[x,1]=x$ for all $x\in L$, then $L$ is Stone, respectively strongly Stone, iff $L/\!\!\equiv $ is Stone, respectively strongly Stone.\end{itemize}\label{echivstone}\end{corollary}

\begin{proof} By Proposition \ref{echiv(i)} applied for $\kappa =1$, then for all cardinalities $\kappa $.\end{proof}

\begin{proposition} If $(L,[\cdot ,\cdot ])$ is a commutator lattice such that $\rho (0)=0$ and $\equiv \ =\{(a,b)\in L^2\ |\ \rho (a)=\rho (b)\}$, then, for any cardinality $\kappa $:\begin{enumerate}
\item\label{echiv(ii)1} if $1/\!\!\equiv \ =\{1\}$, in particular if $1\in {\rm Cp}(L)$ and $[1,1]=1$, then $(2)_{\kappa ,L}$ implies $(2)_{\kappa ,L/\equiv }$;
\item\label{echiv(ii)2} if $1/\!\!\equiv \ =\{1\}$ and $[x,1]=x$ for all $x\in L$, in particular if $1\in {\rm Cp}(L)$ and $[x,1]=x$ for all $x\in L$, then properties $(2)_{\kappa ,L}$ and $(2)_{\kappa ,L/\equiv }$ are equivalent.\end{enumerate}\label{echiv(ii)}\end{proposition}

\begin{proof} We get the particular cases from Lemma \ref{cls1cpct}.

\noindent (\ref{echiv(ii)1}) By Remark \ref{0rho} and Proposition \ref{2theta}, (\ref{2theta1}).

\noindent (\ref{echiv(ii)2}) By Remark \ref{0rho}, Proposition \ref{boollambda}, (\ref{boollambda12}), and Proposition \ref{2theta}, (\ref{2theta2}).\end{proof}

\begin{proposition} If $(L,[\cdot ,\cdot ])$ is a commutator lattice such that $\rho (0)=0$ and $\equiv \ =\{(a,b)\in L^2\ |\ \rho (a)=\rho (b)\}$, then, for any cardinality $\kappa $, the properties $(3)_{\kappa ,L}$ and $(3)_{\kappa ,L/\equiv }$ are equivalent.\label{echiv(iii)}\end{proposition}

\begin{proof} By Remark \ref{0rho} and Proposition \ref{3theta}.\end{proof}

\begin{proposition} If $(L,[\cdot ,\cdot ])$ is a commutator lattice such that $\rho (0)=0$ and $\equiv \ =\{(a,b)\in L^2\ |\ \rho (a)=\rho (b)\}$, then, for any cardinality $\kappa $, the properties $(iv)_L$, $(4)_{\kappa ,L}$ and $(4)_{\kappa ,L/\equiv }$ are equivalent.\label{echiv(iv)}\end{proposition}

\begin{proof} By Remarks \ref{0rho} and \ref{(iv)} and Proposition \ref{4theta}.\end{proof}

\begin{proposition} If $(L,[\cdot ,\cdot ])$ is a commutator lattice and $\equiv \ =\{(a,b)\in L^2\ |\ \rho (a)=\rho (b)\}$, then, for any cardinality $\kappa $:\begin{enumerate}
\item\label{echiv(v)1} $(5)_{\kappa ,L}$ implies $(5)_{\kappa ,L/\equiv }$;
\item\label{echiv(v)2} if $\rho (0)=0$ and $1/\!\!\equiv \  =\{1\}$, in particular if $\rho (0)=0$, $1\in {\rm Cp}(L)$ and $[1,1]=1$, then $(5)_{\kappa ,L}$ is equivalent to $(5)_{\kappa ,L/\equiv }$.\end{enumerate}\label{echiv(v)}\end{proposition}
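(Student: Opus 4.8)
The plan is to deduce both parts from Proposition~\ref{5theta}, taking $M=L$ and $\theta\, =\, \equiv$, in exact analogy with the treatment of conditions $(1)$ through $(4)$ in Propositions~\ref{echiv(i)}--\ref{echiv(iv)}.

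For part~(\ref{echiv(v)1}): Proposition~\ref{5theta},~(\ref{5theta1}), imposes no hypotheses on $M$ or on $\theta $, so applying it with $M=L$ and $\theta\, =\, \equiv$ yields immediately that $(5)_{\kappa ,L}$ implies $(5)_{\kappa ,L/\equiv }$ for every cardinality $\kappa $. This is the one part of the present proposition requiring neither $\rho (0)=0$ nor any condition on the class $1/\!\!\equiv $, in contrast with the analogous statements for $(1)$--$(4)$.

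For part~(\ref{echiv(v)2}): by Remark~\ref{0rho} the hypothesis $\rho (0)=0$ is equivalent to $0/\!\!\equiv \, =\{0\}$. Together with the assumption $1/\!\!\equiv \, =\{1\}$, this puts us in the situation of Proposition~\ref{5theta},~(\ref{5theta2}), with $M=L$ and $\theta\, =\, \equiv$, which gives the equivalence of $(5)_{\kappa ,L}$ and $(5)_{\kappa ,L/\equiv }$. For the parenthetical ``in particular'' clause, note that $1\in {\rm Cp}(L)$ and $[1,1]=1$ force $1/\!\!\equiv \, =\{1\}$ by Lemma~\ref{cls1cpct}, so that case is subsumed by the one just handled, while $\rho (0)=0$ is assumed directly.

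There is no genuine obstacle here: the combinatorial heart of the argument --- lifting a finite join witnessing the defining equality of condition $(5)$ from the quotient back down to $L$, using only that $p_{\equiv }$ maps annihilators into annihilators (Remark~\ref{inclann}) and, for the converse direction, that $0/\!\!\equiv \, =\{0\}$ (Lemma~\ref{anntheta},~(\ref{anntheta0})) and $1/\!\!\equiv \, =\{1\}$ --- is already packaged in Proposition~\ref{5theta}. The only thing to verify is that the abstract hypotheses $0/\theta =\{0\}$ and $1/\theta =\{1\}$ of that proposition translate to the stated commutator-lattice hypotheses via Remark~\ref{0rho} and Lemma~\ref{cls1cpct}.
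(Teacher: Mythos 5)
Your proposal is correct and follows essentially the same route as the paper: part (\ref{echiv(v)1}) is exactly an application of Proposition \ref{5theta}, (\ref{5theta1}), and part (\ref{echiv(v)2}) uses Remark \ref{0rho} to translate $\rho (0)=0$ into $0/\!\!\equiv \, =\{0\}$ and then applies Proposition \ref{5theta}, (\ref{5theta2}), with the parenthetical case handled by Lemma \ref{cls1cpct} (which the paper invokes tacitly). Nothing is missing.
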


\begin{proof} (\ref{echiv(v)1}) By Proposition \ref{5theta}, (\ref{5theta1}).

\noindent (\ref{echiv(v)2}) By Remark \ref{0rho} and Proposition \ref{5theta}, (\ref{5theta2}), with Lemma \ref{cls1cpct} for the particular case.\end{proof}

\begin{theorem} Let $(L,[\cdot ,\cdot ])$ be a commutator lattice with $\rho (0)=0$, and consider the congruence $\equiv \ =\{(a,b)\in L^2\ |\ \rho (a)=\rho (b)\}$ of $L$. If $1/\!\!\equiv\; =\{1\}$ and $[x,1]=x$ for all $x\in L$, in particular if $1\in {\rm Cp}(L)$ and $[x,1]=x$ for all $x\in L$, then:\begin{enumerate}
\item\label{ourdavey1} for any $h,i,j\in \overline{1,5}$ and any nonzero cardinality $\kappa $, conditions $(iv)_L$, $(h)_{\kappa ,L}$, $(i)_{<\infty ,L}$ and $(j)_L$ are equivalent, thus $L$ satisfies the equivalences from Corollary \ref{clearer}, (\ref{clearer1});
\item\label{ourdavey2} let $m$ be a nonzero cardinality; if, in Definition \ref{defcommlat}, we replace the conditions that $L$ is complete and $[\cdot ,\cdot ]$ is completely distributive w.r.t. the join by $L$ being closed w.r.t. the joins of all families of elements of cardinalities at most $m$ and $[\cdot ,\cdot ]$ being distributive w.r.t. such joins, then we get that: for any $h,i\in \overline{1,5}$ and any nonzero cardinality $\kappa \leq m$, conditions $(h)_{\kappa ,L}$ and $(i)_{<\infty ,L}$ are equivalent, thus $L$ satisfies the equivalences from Corollary \ref{clearer}, (\ref{clearer2}).\end{enumerate}\label{ourdavey}\end{theorem}

\begin{proof} We get the particular case from Lemma \ref{cls1cpct}.

\noindent (\ref{ourdavey1}) By Theorem \ref{davey}, (\ref{davey2}), Proposition \ref{echiv(i)}, (\ref{echiv(i)2}),  Proposition \ref{echiv(ii)}, (\ref{echiv(ii)2}), Propositions \ref{echiv(iii)} and \ref{echiv(iv)}, Proposition \ref{echiv(v)}, (\ref{echiv(v)2}), and Proposition \ref{frame}.

\noindent (\ref{ourdavey2}) By Theorem \ref{transferdavey}, (\ref{transferdavey0}), Proposition \ref{echiv(i)}, (\ref{echiv(i)2}),  Proposition \ref{echiv(ii)}, (\ref{echiv(ii)2}), Propositions \ref{echiv(iii)} and \ref{echiv(iv)}, Proposition \ref{echiv(v)}, (\ref{echiv(v)2}), and the fact that, in this case, $L/\!\!\equiv $ is closed w.r.t. the joins of all families of elements of cardinalities at most $\kappa $ and has the meet distributive w.r.t. the joins of families of elements of cardinalities at most $\kappa $, which follows imediately through an argument analogous to the first proof of Proposition \ref{frame}.\end{proof}

\begin{corollary} Let $(L,[\cdot ,\cdot ])$ be a commutator lattice with $\rho (0)=0$. Then:\begin{itemize}
\item for any nonzero cardinalities $\kappa $ and $\mu $, conditions $(3)_{\kappa ,L}$, $(4)_{\mu ,L}$ and $(iv)_L$ are equivalent;
\item if $\{a\in L\ |\ \rho (a)=1\}=\{1\}$, in particular if $1\in {\rm Cp}(L)$ and $[1,1]=1$, then, for any nonzero cardinalities $\kappa $, $\lambda $ and $\mu $, conditions $(3)_{\kappa ,L}$, $(4)_{\lambda ,L}$, $(iv)_L$ and $(5)_{\mu ,L}$ are equivalent.\end{itemize}

Let $m$ be a nonzero cardinality. If, in Definition \ref{defcommlat}, we replace the conditions that $L$ is complete and $[\cdot ,\cdot ]$ is completely distributive w.r.t. the join by $L$ being closed w.r.t. the joins of all families of elements of cardinalities at most $m$ and $[\cdot ,\cdot ]$ being distributive w.r.t. such joins, then:\begin{itemize}
\item for any nonzero cardinalities $\kappa \leq m$ and $\mu \leq m$, conditions $(3)_{\kappa ,L}$ and $(4)_{\mu ,L}$ are  equivalent;
\item if $\{a\in L\ |\ \rho (a)=1\}=\{1\}$, in particular if $1\in {\rm Cp}(L)$ and $[1,1]=1$, then, for any nonzero cardinalities $\kappa \leq m$, $\lambda \leq m$ and $\mu \leq m$, conditions $(3)_{\kappa ,L}$, $(4)_{\lambda ,L}$, and $(5)_{\mu ,L}$ are equivalent.
\end{itemize}\label{3eq4}\end{corollary}

\begin{proof} By Remark \ref{0rho} and Propositions \ref{echiv345} and \ref{frame}, with Lemma \ref{cls1cpct} for the particular cases.\end{proof}

\section{Transferring Davey`s Theorem to Congruence Lattices and Preservation of the Conditions from Davey`s Theorem by Direct Products and Sublattices}

Throughout this section, unless mentioned otherwise, $A$ will be a member of a congruence\linebreak --modular variety ${\cal V}$. Following \cite{retic}, we use these notations for the radical of a congruence of $A$ in the commutator lattice $({\rm Con}(A),\vee ,\cap ,[\cdot ,\cdot ]_A,\Delta _A,\nabla _A)$ and the lattice congruence $\equiv $ associated to the same commutator lattice: $\rho _A(\alpha )=\bigcap ({\rm Spec}(A)\cap [\alpha )_{{\rm Con}(A)})$ for all $\alpha \in {\rm Con}(A)$ and $\equiv _A=\{(\theta ,\zeta )\in {\rm Con}(A)^2\ |\ \rho _A(\theta )=\rho _A(\zeta )\}$.

Recall from the end of Section \ref{theth} that $A$ is semiprime, that is $\rho _A(\Delta _A)=\Delta _A$, if the commutator $[\cdot ,\cdot ]_A$ of $A$ equals the intersection, in particular if ${\cal V}$ is congruence--distributive. Recall, also, that $\nabla _A$ is a compact congruence of $A$ if ${\cal V}$ is semi--degenerate, and that $[\theta ,\nabla _A]_A=\theta $ for all $\theta \in {\rm Con}(A)$ if ${\cal V}$ is semi--degenerate or the commutator of $A$ equals the intersection, in particular if ${\cal V}$ is congruence--distributive. Of course, $\nabla _A$ is a compact congruence of $A$ in other particular cases such as the case when ${\rm Con}(A)$ is compact, in particular when ${\rm Con}(A)$ has finite height, in particular when ${\rm Con}(A)$ is finite, in particular when $A$ is finite.

\begin{remark} By Lemma \ref{cls1cpct} and the above, the class of $\nabla _A$ w.r.t. $\equiv _A$ is a singleton if ${\cal V}$ is semi--degenerate, or if, for instance, ${\rm Con}(A)$ is compact and ${\cal V}$ is congruence--distributive.\end{remark}

$\equiv _A$ satisfies the properties from Proposition \ref{thecong}, in particular, by (\ref{thecong1}), $\equiv _A$ is a lattice congruence of ${\rm Con}(A)$ that preserves arbitrary joins and the commutator of $A$ and satisfies $[\alpha ,\beta ]_A\equiv _A\alpha \cap \beta $ for all $\alpha ,\beta \in {\rm Con}(A)$. Moreover, since the meet in ${\rm Con}(A)$ is the intersection of congruences, the surjectivity of the map $p_{\equiv _A}:{\rm Con}(A)\rightarrow {\rm Con}(A)/\!\!\equiv _A$ ensures us that:

\begin{proposition} If $A$ is a member of a congruence--modular variety and $\equiv _A=\{(\theta ,\zeta )\in {\rm Con}(A)^2\ |\ \rho _A(\theta )=\rho _A(\zeta )\}$, then $\equiv _A$ is a complete congruence of the complete lattice ${\rm Con}(A)$, so all its classes are intervals.\label{cpltcg}\end{proposition}

Again by Proposition \ref{thecong}, the radical congruences of $A$ are the maxima of the classes of $\equiv _A$ and, for each radical congruence $\theta $ of $A$, $\min (\theta /\!\!\equiv _A)=\min \{\alpha \in {\rm Con}(A)\ |\ \rho _A(\alpha )=\theta \}$; also, for all $\beta \in \theta /\!\!\equiv _A$, we have $[\beta ,\min (\theta /\!\!\equiv _A)]_A=\min (\theta /\!\!\equiv _A)$. By Proposition \ref{frame}, ${\rm Con}(A)/\!\!\equiv _A$ is a frame, so Proposition \ref{allradical}, (\ref{allradical2}), gives us:

\begin{corollary} If $A$ is a member of a congruence--modular variety, then all congruences of $A$ are radical iff the commutator of $A$ equals the intersection of congruences.\end{corollary}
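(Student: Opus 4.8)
The plan is to observe that this corollary is just the specialization of Proposition \ref{allradical}, part (\ref{allradical2}), to the commutator lattice $({\rm Con}(A),\vee ,\cap ,[\cdot ,\cdot ]_A,\Delta _A,\nabla _A)$, once we record that ${\rm Con}(A)$ is an algebraic lattice. So the first (and only structural) step is to recall that the congruence lattice of any algebra is algebraic: its compact elements are precisely the finitely generated congruences, as already noted in Section \ref{introduction}, and every congruence is the join of the principal — hence compact — congruences it contains. This is exactly the hypothesis needed to apply Proposition \ref{allradical}(\ref{allradical2}) with $L={\rm Con}(A)$ and commutator $[\cdot ,\cdot ]_A$.

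Granting that, the two implications are immediate. For the forward direction, suppose every congruence of $A$ is radical, i.e. $R({\rm Con}(A))={\rm Con}(A)$; then Proposition \ref{allradical}(\ref{allradical1}), which needs no algebraicity assumption, already forces $[\cdot ,\cdot ]_A=\cap $ on ${\rm Con}(A)$ (and in fact makes ${\rm Con}(A)$ a frame). For the converse, suppose $[\cdot ,\cdot ]_A=\cap $; since ${\rm Con}(A)$ is algebraic, the discussion in Remark \ref{cgrad} — together with Remark \ref{commmeet}, by which ${\rm Spec}_L={\rm Mi}(L)\setminus \{1\}$ when $[\cdot ,\cdot ]=\wedge $ — shows that every element of ${\rm Con}(A)$ is a meet of strictly meet-irreducibles, hence a meet of prime congruences, so $R({\rm Con}(A))={\rm Con}(A)$. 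This is precisely what Proposition \ref{allradical}(\ref{allradical2}) packages.

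I do not expect a genuine obstacle: the corollary follows by direct citation of Proposition \ref{allradical}(\ref{allradical2}). The single point to state with care is the algebraicity of ${\rm Con}(A)$, since that is the sole hypothesis of the cited result which is not automatic here; the frame property of ${\rm Con}(A)/\!\!\equiv _A$ from Proposition \ref{frame} supplies useful context but is not logically required for this particular equivalence.
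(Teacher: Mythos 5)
Your proposal is correct and follows essentially the same route as the paper: the corollary is obtained by applying Proposition \ref{allradical}, (\ref{allradical2}) (whose content rests on Remark \ref{cgrad} together with Remark \ref{commmeet} and part (\ref{allradical1})) to the algebraic lattice ${\rm Con}(A)$ with the modular commutator. Your observation that the algebraicity of ${\rm Con}(A)$ is the operative hypothesis, while the frame property of ${\rm Con}(A)/\!\!\equiv _A$ is only contextual, matches the paper's argument.
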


Note that, if $A$ is semiprime, then, for $\theta =\; \equiv _A$, the annihilators in ${\rm Con}(A)$ satisfy the properties from Lemmas \ref{anntheta} and \ref{panntheta}. Also, if $A$ is semiprime, then the properties from Lemmas \ref{suchrho} and \ref{compactata} and Proposition \ref{annid} hold, in particular:

\begin{corollary} If $A$ is a semiprime member of a congruence--modular variety, in particular if $A$ belongs to a congruence--distributive variety, then:\begin{itemize}
\item ${\cal A}nn({\rm Con}(A))={\rm PAnn}({\rm Con}(A))\subseteq {\rm PId}({\rm Con}(A))$;
\item ${\rm Con}(A)$ is Stone iff ${\rm Con}(A)$ is strongly Stone.\end{itemize}\end{corollary}

So, if $A$ is semiprime, then all annihilators in ${\rm Con}(A)$ have maxima, so that ${\rm Con}(A)$ is Stone iff these maxima are complemented; see also Proposition \ref{charstone}, (\ref{charstone2}). 

By Lemma \ref{meetbool} and Proposition \ref{boolcenter}, if ${\cal V}$ is semi--degenerate or congruence\linebreak --distributive, then $[\alpha ,\theta ]_A=\alpha \cap \theta $ for all $\alpha \in {\rm Con}(A)$ and all $\theta \in {\cal B}({\rm Con}(A))$ and ${\cal B}({\rm Con}(A))$ is a Boolean sublattice of ${\rm Con}(A)$ in which the complementation is defined by $\neg \, \theta =\max ({\rm Ann}_{{\rm Con}(A)}(\theta ))$ for all $\theta \in {\cal B}({\rm Con}(A))$. By Proposition \ref{boolcp}, if ${\cal V}$ is semi--degenerate, then all complemented congruences of $A$ are compact.

\begin{remark} By Proposition \ref{boolid} and Proposition \ref{boolmorph}, (\ref{boolmorph46}), if $A$ is se\-mi\-prime and $\nabla _A/\!\!\equiv _A=\{\nabla _A\}$, in particular if $A$ is semiprime and ${\cal V}$ is semi--degenerate, then the complemented elements of ${\rm Id}({\rm Con}(A))$ are the principal ideals of ${\rm Con}(A)$ generated by complemented congruences of $A$, ${\cal B}({\rm Con}(A))/\!\!\equiv _A={\cal B}({\rm Con}(A)/\!\!\equiv _A)$, that is $\equiv _A$ has the BLP, and, for all $\theta \in {\rm Con}(A)$, we have: $\theta \in {\cal B}({\rm Con}(A))$ iff $\theta /\!\!\equiv _A\in {\cal B}({\rm Con}(A)/\!\!\equiv _A)$.\label{boolidcg}\end{remark}

By Proposition \ref{boolmorph}, if ${\cal V}$ is semi--degenerate or congruence--distributive, then\linebreak $p_{\equiv _A}\mid _{{\cal B}({\rm Con}(A))}:{\cal B}({\rm Con}(A))\rightarrow {\cal B}({\rm Con}(A)/\!\!\equiv _A)$ is an injective Boolean morphism, which is an isomorphism if $A$ is semiprime and $\nabla _A/\!\!\equiv _A=\{\nabla _A\}$, in particular if ${\cal V}$ is both semi--degenerate and congruence--distributive.

If $A$ is semiprime, $\nabla _A/\!\! \equiv _A\ =\{\nabla _A\}$ and $[\alpha ,\nabla _A]_A=\alpha $ for all $\alpha \in {\rm Con}(A)$, in particular if $A$ is semiprime and ${\cal V}$ is semi--degenerate or, for instance, ${\cal V}$ is congruence--distributive and ${\rm Con}(A)$ is compact, then ${\rm Con}(A)$ satisfies the properties from Proposition \ref{boollambda}, in particular all complemented congruences of $A$ are radical.

\begin{corollary} Let $A$ be a member of a congruence--modular variety ${\cal V}$, $\equiv _A=\{(\theta ,\zeta )\in {\rm Con}(A)^2\ |\ \rho _A(\theta )=\rho _A(\zeta )\}$ and $\kappa $ be an arbitrary cardinality.

Then $(5)_{\kappa ,{\rm Con}(A)}$ implies $(5)_{\kappa ,{\rm Con}(A)/\equiv _A}$.

If $A$ is semiprime, in particular if the commutator of $A$ equals the intersection, in particular if ${\cal V}$ is congruence--distributive, then:\begin{itemize}
\item if ${\rm Con}(A)$ is Stone (equivalently, strongly Stone), then ${\rm Con}(A)/\!\!\equiv _A$ is Stone (equivalently, strongly Stone); in particular, if $\kappa $ is nonzero, then, for any cardinality $\lambda $, $(1)_{\kappa ,{\rm Con}(A)}$ implies $(1)_{\lambda ,{\rm Con}(A)/\equiv _A}$;
\item $(3)_{\kappa ,{\rm Con}(A)}$ is equivalent to $(3)_{\kappa ,{\rm Con}(A)/\equiv _A}$;
\item $(iv)_{{\rm Con}(A)}$, $(4)_{\kappa ,{\rm Con}(A)}$ and $(4)_{\kappa ,{\rm Con}(A)/\equiv _A}$ are equivalent.\end{itemize}

If $A$ is semiprime and $\{\theta \in {\rm Con}(A)\ |\ \rho _A(\theta )=\nabla _A\}=\{\nabla _A\}$, in particular if $A$ is semiprime, $\nabla _A\in {\rm Cp}({\rm Con}(A))$ and $[\nabla _A,\nabla _A]_A=\nabla _A$, in particular if $A$ is semiprime and ${\cal V}$ is semi--degenerate, in particular if ${\cal V}$ is congruence--distributive and semi--degenerate, then:\begin{itemize}
\item $(2)_{\kappa ,{\rm Con}(A)}$ implies $(2)_{\kappa ,{\rm Con}(A)/\equiv _A}$;
\item $(5)_{\kappa ,{\rm Con}(A)}$ is equivalent to $(5)_{\kappa ,{\rm Con}(A)/\equiv _A}$.\end{itemize}

If $A$ is semiprime, $\{\theta \in {\rm Con}(A)\ |\ \rho _A(\theta )=\nabla _A\}=\{\nabla _A\}$ and $[\alpha ,\nabla _A]_A=\alpha $ for all $\alpha \in {\rm Con}(A)$, in particular if $A$ is semiprime and ${\cal V}$ is semi--degenerate or ${\cal V}$ is congruence--distributive and $\nabla _A\in {\rm Cp}({\rm Con}(A))$, in particular if ${\cal V}$ is congruence--distributive and semi--degenerate, then:\begin{itemize}
\item ${\rm Con}(A)$ is Stone iff ${\rm Con}(A)$ is strongly Stone iff ${\rm Con}(A)/\!\!\equiv _A$ is Stone iff ${\rm Con}(A)/\!\!\equiv _A$ is strongly Stone;
\item for each $i\in \overline{1,5}$, $(i)_{\kappa ,{\rm Con}(A)}$ is equivalent to $(i)_{\kappa ,{\rm Con}(A)/\equiv _A}$.
\end{itemize}\label{cglatdavey}\end{corollary}

\begin{proof} By Propositions \ref{echiv(i)}, \ref{echiv(ii)}, \ref{echiv(iii)}, \ref{echiv(iv)} and \ref{echiv(v)} and Corollary \ref{echivstone}.\end{proof}

\begin{corollary} Let $A$ be a member of a congruence--modular variety ${\cal V}$. If $A$ is se\-mi\-prime, $\{\theta \in {\rm Con}(A)\ |\ \rho _A(\theta )=\nabla _A\}=\{\nabla _A\}$ and $[\alpha ,\nabla _A]_A=\alpha $ for all $\alpha \in {\rm Con}(A)$, in particular if $A$ is semiprime and ${\cal V}$ is semi--degenerate or ${\cal V}$ is congruence--distributive and $\nabla _A\in {\rm Cp}({\rm Con}(A))$, in particular if ${\cal V}$ is congruence--distributive and semi--degenerate, then, for any $i,j\in \overline{1,5}$ and any nonzero cardinalities $\kappa $ and $\mu $, conditions $(iv)_{{\rm Con}(A)}$, $(i)_{\kappa ,{\rm Con}(A)}$ and $(j)_{\mu ,{\rm Con}(A)}$ are equivalent, and thus each of them is equivalent to\linebreak $(i)_{<\infty ,{\rm Con}(A)}$ and to $(i)_{{\rm Con}(A)}$ and ${\rm Con}(A)$ satisfies the equivalences from Corollary \ref{clearer}, (\ref{clearer1}).\label{transfercglat}\end{corollary}

\begin{corollary} Let $A$ be a member of a congruence--modular variety ${\cal V}$. If $A$ is se\-mi\-prime, in particular if the commutator of $A$ equals the intersection, in particular if ${\cal V}$ is con\-gru\-ence--distributive, then:\begin{itemize}
\item for any nonzero cardinalities $\kappa $ and $\mu $, conditions $(3)_{\kappa ,{\rm Con}(A)}$, $(4)_{\mu ,{\rm Con}(A)}$ and\linebreak $(iv)_{{\rm Con}(A)}$ are equivalent;
\item if $\{\theta \in {\rm Con}(A)\ |\ \rho _A(\theta )=\nabla _A\}=\{\nabla _A\}$, in particular if $\nabla _A\in {\rm Cp}({\rm Con}(A))$ and $[\nabla _A,\nabla _A]_A=\nabla _A$, then, for any nonzero cardinalities $\kappa $, $\lambda $ and $\mu $, conditions $(3)_{\kappa ,{\rm Con}(A)}$, $(4)_{\lambda ,{\rm Con}(A)}$, $(iv)_{{\rm Con}(A)}$ and $(5)_{\mu ,{\rm Con}(A)}$ are equivalent.\end{itemize}\end{corollary}

\begin{proof} By Corollary \ref{3eq4}.\end{proof}

\begin{remark} Note that, for any nonempty family $(L_i)_{i\in I}$ of bounded lattices, $\displaystyle {\cal B}(\prod _{i\in I}L_i)\linebreak =\prod _{i\in I}{\cal B}(L_i)$ and, for all $\displaystyle (a_i)_{i\in I}\in \prod _{i\in I}L_i$, $\displaystyle {\rm Ann}_{\prod _{i\in I}L_i}((a_i)_{i\in I})=\prod _{i\in I}{\rm Ann}_{L_i}(a_i)$ and\linebreak  $\displaystyle ((a_i)_{i\in I}]_{\prod _{i\in I}L_i}=\prod _{i\in I}(a_i]_{L_i}$. Moreover, if $\displaystyle pr_j:\prod _{i\in I}L_i\rightarrow L_j$ is the canonical projection for each $j\in I$, then, for all $\displaystyle U\subseteq \prod _{i\in I}L_i$, $\displaystyle {\rm Ann}_{\prod _{i\in I}L_i}(U)=\prod _{i\in I}{\rm Ann}_{L_i}(pr_i(U))$ and $\displaystyle (U]_{\prod _{i\in I}L_i}=\prod _{i\in I}(pr_i(U)]_{L_i}$. Hence, for any cardinality $\kappa $ and each $h\in \overline{1,5}$, $(h)_{\kappa ,\prod _{i\in I}L_i}$ is satisfied iff $(h)_{\kappa ,L_i}$ is satisfied for all $i\in I$.\label{presprod}\end{remark}

\begin{corollary} Let $A$ and $B$ be members of a congruence--modular variety ${\cal V}$ and $\kappa $ be a cardinality.\begin{itemize}
\item If the direct product $A\times B$ has no skew congruences, in particular if ${\cal V}$ is semi--degenerate, then: $A\times B$ is semiprime iff $A$ and $B$ are semiprime.
\item If the direct product $A\times B$ has no skew congruences and $A$ and $B$ are semiprime, then: ${\rm Con}(A\times B)$ is Stone iff ${\rm Con}(A)$ and ${\rm Con}(B)$ are Stone.
\item If the direct product $A\times B$ has no skew congruences, $A$ and $B$ are semiprime and ${\cal B}({\rm Con}(A))$ and ${\cal B}({\rm Con}(B))$ are closed w.r.t. the intersection, in particular if they are (Boolean) sublattices of ${\rm Con}(A)$ and ${\rm Con}(B)$, respectively, then: ${\rm Con}(A\times B)$ is $\kappa $--Stone iff ${\rm Con}(A)$ and ${\rm Con}(B)$ are $\kappa $--Stone.
\item If ${\cal V}$ is semi--degenerate and $A$ and $B$ are semiprime, then: ${\rm Con}(A\times B)$ is $\kappa $--Stone iff ${\rm Con}(A)$ and ${\rm Con}(B)$ are $\kappa $--Stone.
\item If ${\cal V}$ is congruence--distributive, then: ${\rm Con}(A\times B)$ is $\kappa $--Stone iff ${\rm Con}(A)$ and ${\rm Con}(B)$ are $\kappa $--Stone.\end{itemize}\label{prodcg}\end{corollary}

\begin{proof} According to \cite[Theorem 5.17, p. 48]{ouwe} that, for all $\alpha ,\theta \in {\rm Con}(A)$ and all $\beta ,\zeta \in {\rm Con}(B)$, we have $[\alpha \times \beta ,\theta \times \zeta ]_{A\times B}=[\alpha ,\theta ]_A\times [\beta ,\zeta ]_B$, hence, if the direct product $A\times B$ has no skew congruences, in particular if ${\cal V}$ is congruence--distributive or semi--degenerate (\cite[Theorem 8.5, p. 85]{fremck},\cite[Lemma 5.2]{agl}), then ${\rm Spec}(A\times B)=\{\phi \times \nabla _B,\nabla _A\times \psi \ |\ \phi \in {\rm Spec}(A),\psi \in {\rm Spec}(B)\}$, so that $R({\rm Con}(A\times B))=R({\rm Con}(A))\times R({\rm Con}(B))$ and hence $A\times B$ is semiprime iff $A$ and $B$ are semiprime (see also \cite{retic}).

Since, for any bounded lattice $L$, if ${\cal B}(L)$ is closed w.r.t. the meet, in particular if ${\cal B}(L)$ is a (Boolean) sublattice of $L$, then $1_{1,L}$ is equivalent to $1_{<\infty ,L}$, and, if $\kappa $ is an infinite cardinality, then, for any $U\subseteq A$ and any $V\subseteq B$, we have: $|U|\leq \kappa $ and $|V|\leq \kappa $ iff $|U\times V|\leq \kappa $, by Remark \ref{presprod} we get the statements in the enunciation.\end{proof}

\begin{remark} Let $M$ be a bounded sublattice of a lattice $L$ and $U,V\subseteq L$. Then it is straightforward that ${\rm Ann}_L(U)\cap M\subseteq {\rm Ann}_M(U\cap M)$, $(U\cap M]_L\cap M=(U\cap M]_M$ and, if $L$ is distributive, then $(U]_L\cap (V]_L=(U\cap V]_L$.\end{remark}

\begin{lemma} Let $L$ be a bounded distributive lattice and $M$ a bounded sublattice of $L$.\begin{enumerate}
\item\label{sublat} If $U\subseteq M$ is such that ${\rm Ann}_L(U)\vee {\rm Ann}_L({\rm Ann}_L(U))=L$, then ${\rm Ann}_M(U)\vee \linebreak {\rm Ann}_M({\rm Ann}_M(U))=M$.
\item\label{(v)sublat} for any cardinality $\kappa $, $(5)_{\kappa ,L}$ implies $(5)_{\kappa ,M}$.\end{enumerate}\label{allsublat}\end{lemma}

\begin{proof} (\ref{sublat}) If $U$ is as in the hypothesis, then ${\rm Ann}_M(U)\vee {\rm Ann}_M({\rm Ann}_M(U))\supseteq ({\rm Ann}_L(U)\cap M)\vee ({\rm Ann}_M({\rm Ann}_L(U)\cap M)\supseteq ({\rm Ann}_L(U)\cap M)\vee ({\rm Ann}_L({\rm Ann}_L(U))\cap M)\supseteq ({\rm Ann}_L(U)\cap M)\cup ({\rm Ann}_L({\rm Ann}_L(U))\cap M)=({\rm Ann}_L(U)\cup {\rm Ann}_L({\rm Ann}_L(U))\cap M$, thus ${\rm Ann}_M(U)\vee {\rm Ann}_M({\rm Ann}_M(U))\supseteq ({\rm Ann}_L(U)\cup {\rm Ann}_L({\rm Ann}_L(U))]_M\cap (M]_M=({\rm Ann}_L(U)\cup {\rm Ann}_L(\linebreak {\rm Ann}_L(U))]_L\cap M\cap M=({\rm Ann}_L(U)\vee {\rm Ann}_L({\rm Ann}_L(U)))\cap M=L\cap M=M$.

\noindent (\ref{(v)sublat}) Assume that $(5)_{\kappa ,L}$ is fulfilled, and let $U\subseteq M\subseteq L$ with $|U|\leq \kappa $, so that ${\rm Ann}_L(U)\vee {\rm Ann}_L({\rm Ann}_L(U))=L$, hence ${\rm Ann}_M(U)\vee {\rm Ann}_M({\rm Ann}_M(U))=M$ by (\ref{sublat}).\end{proof}

Let us assume that the set ${\rm Cp}({\rm Con}(A))$ of the compact congruences of $A$ contains $\nabla _A$ and is closed w.r.t. the commutator of $A$. In \cite{retic}, under these hypotheses we have constructed the {\em reticulation} ${\cal L}(A)$ of $A$, which, by definition, is a bounded distributive lattice whose prime spectrum of ideals (or filters, but our construction in \cite{retic} fulfills this property for ideals) is homeomorphic to the prime spectrum of congruences of $A$, w.r.t. the Stone topologies. ${\cal L}(A)$ is unique modulo a lattice isomorphism and, by our construction from \cite{retic}: ${\cal L}(A)={\rm Cp}({\rm Con}(A))/\!\!\equiv _A$, which is a bounded sublattice of ${\rm Con}(A)/\!\!\equiv _A$.

\begin{proposition} Let $A$ be a member of a congruence--modular variety ${\cal V}$ such that $\nabla _A\in {\rm Cp}({\rm Con}(A))$ and ${\rm Cp}({\rm Con}(A))$ is closed w.r.t. the commutator of $A$. Then, for any cardinality $\kappa $, $(5)_{\kappa ,{\rm Con}(A)}$ implies $(5)_{\kappa ,{\cal L}(A)}$.\label{cglatvsretic}\end{proposition}

\begin{proof} By Corollary \ref{cglatdavey} and Lemma \ref{allsublat}, (\ref{(v)sublat}).\end{proof}

\begin{corollary} Let $A$ be a member of a congruence--modular variety ${\cal V}$ such that $\nabla _A\in {\rm Cp}({\rm Con}(A))$ and ${\rm Cp}({\rm Con}(A))$ is closed w.r.t. the commutator of $A$. If $A$ is semiprime and ${\rm Con}(A)$ is Stone, then ${\cal L}(A)$ is strongly Stone.\end{corollary}

\begin{proof} By Proposition \ref{cglatvsretic}, Corollary \ref{cglatdavey}, the distributivity of ${\cal L}(A)$ and Theorem \ref{davey}, (\ref{davey3}).\end{proof}

\section{Transferring Davey`s Theorem to Commutative Unitary Rings}
\label{resultselem}

Let us see how we can to obtain versions of Davey's Theorem for the elements of semiprime algebras from congruence--modular varieties by transferring results such as Corollary \ref{transfercglat} from their congruence lattices. We exemplify here for semiprime commutative unitary rings.

Let $(T,\vee ,\wedge ,\odot ,\rightarrow ,0,1)$ be a {\em residuated lattice} (otherwise called a {\em commutative integral bounded lattice--ordered monoid}), which means that $(T,\vee ,\wedge ,0,1)$ is a bounded lattice, $(T,\odot ,1)$ is a commutative monoid and $\rightarrow $ is a binary operation on $T$ which fulfills the {\em law of residuation}: for all $a,b,c\in T$, $a\leq b\rightarrow c$ iff $a\odot b\leq c$. See more about residuated lattices in \cite{gal}, \cite{ior}, \cite{pic}. Residuated lattices form a semi--degenerate congruence--distributive variety, hence they are semiprime and thus their congruence lattices satisfy Theorem \ref{davey}, (\ref{davey1}), and even the equivalences from Corollary \ref{transfercglat}. But they also fulfill a theorem of this form for elements, which can be expressed in the following way, since we notice that the bounded lattice of the filters of $T$ is a bounded sublattice of that of the filters of the underlying bounded lattice of $T$ and that, for each $e\in {\cal B}(T)$:

\begin{theorem}{\rm \cite[Theorem $5.2.6$]{eu},\cite[Theorem $3.13$]{eu7}} If $S$ is the dual of the underlying bounded lattice of a residuated lattice, then conditions $(1)_{m,S}$, $(2)_{m,S}$, $(3)_{m,S}$, $(4)_{m,S}$ and $(5)_{m,S}$ are equivalent.\label{elemreslat}\end{theorem}

In \cite{eu,eu7}, we have proven Theorem \ref{elemreslat} by transferring the dual of Theorem \ref{davey} from bounded distributive lattices to residuated lattices through the reticulation functor for residuated lattices.

\begin{remark} Note from Lemma \ref{residuation} that, if $(L,\vee ,\wedge ,[\cdot ,\cdot ],0,1)$ is a commutator lattice in which the operation $[\cdot ,\cdot ]$ is associative and satisfies $[a,1]=a$ for all $a\in L$, then $L$ is a complete residuated lattice with the residuation $\rightarrow $ defined above Remark \ref{negann}.\end{remark}

For instance, rings form a semi--degenerate congruence--modular variety with associative commutators, so that, for any commutative unitary ring $R$, $({\rm Con}(R),\vee ,\cap ,[\cdot ,\cdot ]_R,\rightarrow ,\Delta _R,\nabla _R)$ is a complete residuated lattice.

Throughout the rest of this section, unless mentioned otherwise, $(R,+,\cdot ,0,1)$ will be a commutative unitary ring.

We denote by $({\rm Id}(R),\vee =+,\cap ,[\cdot ,\cdot ]=\cdot ,\{0\},R)$ the commutator lattice of the ideals of $R$ and by $\iota\gamma _R:{\rm Id}(R)\rightarrow {\rm Con}(R)$ the canonical lattice isomorphism: for all $I\in {\rm Id}(R)$, $\iota\gamma _R(I)=\{(x,y)\in I^2\ |\ x-y\in I\}$. We denote by ${\rm Spec}_{\rm Id}(R)={\rm Spec}_{{\rm Id}(R)}$ the set of the prime ideals of $R$ (w.r.t. to the commutator operation given by the multiplication of ideals). Recall that $\iota\gamma _R$ preserves the commutator operation, that is $[\iota\gamma _R(I),\iota\gamma _R(J)]_R=\iota\gamma _R(I\cdot J)$ for all $I,J\in {\rm Id}(R)$, from which it is easy to deduce that $\iota\gamma _R({\rm Spec}_{\rm Id}(R))={\rm Spec}(R)$ and thus $\iota\gamma _R(R({\rm Id}(R)))=R({\rm Con}(R))$. If we denote, for each $I\in {\rm Id}(R)$, by $\sqrt{I}=\bigcap \{P\in {\rm Spec}_{\rm Id}(R)\ |\ I\subseteq P\}$ the {\em radical} of $I$, then note that $R$ is semiprime iff $\{0\}\in R({\rm Id}(R))$ iff $\sqrt{\{0\}}=\{0\}$.

For every $U\subseteq R$, $\langle U\rangle _R$ shall be the ideal of $R$ generated by $U$, so, for each $x\in R$, $\langle \{x\}\rangle _R=xR$. Let ${\rm PId}(R)$ be the set of the principal ideals of $R$ and note that ${\rm Cp}({\rm Id}(R))$ is the set of the finitely generated ideals of $R$. It is straightforward that, for all $x,a,b\in R$, $\iota\gamma _R(xR)=Cg_R(x,0)$ and $Cg_R(a,b)=Cg_R(a-b,0)$, hence $\iota\gamma _R({\rm PId}(R))={\rm PCon}(R)$ and thus $\iota\gamma _R({\rm Cp}({\rm Id}(R)))={\rm Cp}({\rm Con}(R))$. Notice that, for any $k,n\in \N ^*$ and any $x_1,\ldots ,x_k,y_1,\ldots ,y_n\in R$, $\langle \{x_1,\ldots ,x_k\}\rangle _R\cdot \langle \{y_1,\ldots ,y_k\}\rangle _R=(x_1R+\ldots +x_kR)\cdot (y_1R+\ldots +y_nR)=\langle \{x_iy_j\ |\ i\in \overline{1,k},j\in \overline{1,n}\}\rangle _R$, so ${\rm Cp}({\rm Id}(R))$ is closed w.r.t. $\cdot $, thus ${\rm Cp}({\rm Con}(R))$ is closed w.r.t. $[\cdot ,\cdot ]_R$. Let $R^*$ be the reticulation of $R$, as constructed in \cite{bell,bell2} (see also \cite{joy,sim}): $R^*={\rm Id}(R)/\!\!\sim _R$, where $\sim _R$ is the complete lattice congruence of ${\rm Id}(R)$ defined by: $\sim _R=\{(I,J)\in ({\rm Id}(R))^2\ |\ \sqrt{I}=\sqrt{J}\}$ (see also Proposition \ref{cpltcg}); by the above, $R^*$ is isomorphic to ${\rm Con}(R)/\!\!\equiv _R$. Regarding the results from \cite{bell} we are using, note that, since $R$ is commutative, it follows that $R$ is quasicommutative, thus, by \cite[Theorem $3$]{bell}, $R$ fulfills condition $(*)$ from \cite{bell}.

\begin{remark} By \cite[Lemma, p. 1861]{bell}, for all $I\in {\rm Id}(R)$, there exists a $K\in {\rm Cp}({\rm Id}(R))$ such that $K\subseteq I$ and $\sqrt{K}=\sqrt{I}$, hence $R^*={\rm Id}(R)/\!\!\sim _R={\rm Cp}({\rm Id}(R))/\!\!\sim _R$, therefore ${\rm Con}(R)/\!\!\equiv _R\ ={\rm Cp}({\rm Con}(R))/\!\!\equiv _R\ ={\cal L}(R)$, thus, as expected by the uniqueness of the reticulation, $R^*$ is isomorphic to ${\cal L}(R)$.\label{hereswhy}\end{remark}

The fact that the variety of commutative unitary rings is semi--degenerate and congruence--modular and Corollary \ref{transfercglat}, along with the fact that the lattices ${\rm Con}(R)$ and ${\rm Id}(R)$ are isomorphic, give us:

\begin{corollary} If $R$ is a semiprime commutative unitary ring, then, for any $i,j\in \overline{1,5}$ and any nonzero cardinalities $\kappa $ and $\mu $, conditions $(iv)_{{\rm Id}(R)}$, $(i)_{\kappa ,{\rm Id}(R)}$ and $(j)_{\mu ,{\rm Id}(R)}$ are equivalent, in particular ${\rm Id}(R)$ is a Stone lattice iff it is a strongly Stone lattice.\label{idrdavey}\end{corollary}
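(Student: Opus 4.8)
The plan is to deduce this from Corollary \ref{transfercglat} by transporting everything along the canonical lattice isomorphism $\iota\gamma _R:{\rm Id}(R)\rightarrow {\rm Con}(R)$.

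First I would recall that the variety of commutative unitary rings is semi--degenerate and congruence--modular, so that Corollary \ref{transfercglat} becomes available for $R$ as soon as $R$ is semiprime in the algebraic sense, i.e. as soon as $\Delta _R\in R({\rm Con}(R))$. Since $\iota\gamma _R$ is a lattice isomorphism preserving the commutator, it satisfies $\iota\gamma _R(R({\rm Id}(R)))=R({\rm Con}(R))$ and $\iota\gamma _R(\{0\})=\Delta _R$, whence $\{0\}\in R({\rm Id}(R))$ iff $\Delta _R\in R({\rm Con}(R))$; that is, the hypothesis that $R$ is semiprime (equivalently $\sqrt{\{0\}}=\{0\}$) is precisely what is needed to invoke Corollary \ref{transfercglat}. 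Doing so yields that, for all $i,j\in \overline{1,5}$ and all nonzero cardinalities $\kappa ,\mu $, the conditions $(iv)_{{\rm Con}(R)}$, $(i)_{\kappa ,{\rm Con}(R)}$ and $(j)_{\mu ,{\rm Con}(R)}$ are equivalent.

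Next I would carry these equivalences back to ${\rm Id}(R)$ along $\iota\gamma _R$. Each condition $(i)_{\kappa ,\cdot }$ for $i\in \overline{1,5}$, as well as $(iv)_\cdot $, is phrased entirely in the language of bounded lattices --- using only finite meets and joins, the bounds, annihilators, generated ideals, the Boolean center and the derived posets ${\rm PAnn}$, ${\rm P2Ann}$, etc. A bounded lattice isomorphism $f:L\rightarrow M$ induces an isomorphism ${\rm Id}(L)\rightarrow {\rm Id}(M)$, $I\mapsto f(I)$, and satisfies ${\rm Ann}_M(f(U))=f({\rm Ann}_L(U))$ for every $U\subseteq L$, hence also matches ${\cal B}(L)$ with ${\cal B}(M)$, ${\rm PId}(L)$ with ${\rm PId}(M)$, and so on. Applying this to $f=\iota\gamma _R$, condition $(i)_{\kappa ,{\rm Id}(R)}$ holds iff $(i)_{\kappa ,{\rm Con}(R)}$ holds, and $(iv)_{{\rm Id}(R)}$ holds iff $(iv)_{{\rm Con}(R)}$ holds. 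Chaining these bijective correspondences with the equivalences of the previous paragraph gives the asserted equivalence of $(iv)_{{\rm Id}(R)}$, $(i)_{\kappa ,{\rm Id}(R)}$ and $(j)_{\mu ,{\rm Id}(R)}$ for all $i,j\in \overline{1,5}$ and all nonzero $\kappa ,\mu $.

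I do not expect a genuine obstacle: the work is purely organizational. The only point requiring a little care is checking that $\iota\gamma _R$ transports every one of the conditions $(1)$--$(5)$ and $(iv)$ faithfully, which is routine once one observes that all of them are definable from the bounded--lattice structure alone; the commutator--preserving property of $\iota\gamma _R$ is used only to identify the two notions of semiprimeness, and that identification is exactly what unlocks Corollary \ref{transfercglat}.
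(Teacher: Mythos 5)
Your proposal is correct and takes essentially the same route as the paper: the paper also obtains this corollary directly from Corollary \ref{transfercglat}, using that rings form a semi--degenerate congruence--modular variety and that the canonical isomorphism $\iota\gamma _R:{\rm Id}(R)\rightarrow {\rm Con}(R)$ transports all the lattice--theoretic conditions, the identification of ring--theoretic semiprimeness ($\sqrt{\{0\}}=\{0\}$) with algebraic semiprimeness having already been noted via $\iota\gamma _R(R({\rm Id}(R)))=R({\rm Con}(R))$. Your more explicit verification that each condition $(i)_{\kappa ,\cdot }$ and $(iv)_{\cdot }$ is preserved under a bounded lattice isomorphism is exactly the ``routine'' step the paper leaves implicit.
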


Let us see that, similarly to what happens in residuated lattices, commutative unitary rings also fulfill an analogue of Davey`s Theorem for elements instead of congruences.

Let $\kappa $ be an arbitrary cardinality.

For any $a\in R$ and any $U\subseteq R$, ${\rm Ann}_R(a)$ and ${\rm Ann}_R(U)$ will denote the {\em annihilator} of $a$ and that of $U$, respectively: ${\rm Ann}_R(a)=\{x\in R\ |\ xa=0\}$ and $\displaystyle {\rm Ann}_R(U)=\bigcap _{u\in U}{\rm Ann}_R(u)$. As in the case of bounded lattices, let us denote by ${\cal A}nn(R)=\{{\rm Ann}_R(U)\ |\ U\subseteq R\}$, ${\cal A}nn_{<\infty }(R)=\{{\rm Ann}_R(U)\ |\ U\subseteq R,|U|<\aleph _0\}$, ${\cal A}nn_{\kappa }(R)=\{{\rm Ann}_R(U)\ |\ U\subseteq R,|U|\leq \kappa \}$, ${\rm PAnn}(R)=\{{\rm Ann}_R(a)\ |\ a\in R\}={\cal A}nn_1(R)$, ${\rm 2Ann}(R)=\{{\rm Ann}_R({\rm Ann}_R(U))\ |\ U\subseteq R\}$, ${\rm 2Ann}_{<\infty }(R)=\{{\rm Ann}_R({\rm Ann}_R(U))\ |\ U\subseteq R,|U|<\aleph _0\}$, ${\rm 2Ann}_{\kappa }(R)=\{{\rm Ann}_R({\rm Ann}_R(U))\ |\linebreak U\subseteq R,|U|\leq \kappa \}$ and ${\rm P2Ann}(R)=\{{\rm Ann}_R({\rm Ann})_R(a))\ |\ a\in R\}={\rm 2Ann}_1(R)$. It is well known and straightforward that ${\cal A}nn(R)\subseteq {\rm Id}(R)$.

$E(R)$ will denote the set of the idempotent elements of $R$. Recall that $(E(R),\vee ,\wedge =\cdot ,\neg \, ,0,1)$ is a Boolean algebra, where, for every $e,f\in E(R)$, $\neg \, e=1-e$ and $e\vee f=\neg \, (\neg \, e\wedge \neg \, f)=1-(1-e)\cdot (1-f)$.

$R$ is called a {\em Baer ring} iff, for any $a\in R$, there exists an $e\in E(R)$ such that ${\rm Ann}_R(a)=eR$. By analogy to the case of bounded lattices, we shall call $R$ a {\em strongly Baer ring}, respectively a {\em $\kappa $--Baer ring} iff, for any $U\subseteq R$, respectively any $U\subseteq R$ with $|U|\leq \kappa $, there exists an $e\in E(R)$ such that ${\rm Ann}_R(U)=eR$.

Let us consider the following conditions on $R$, where $\kappa $ is an arbitrary cardinality:

\begin{flushleft}\begin{tabular}{ll}
$(1^{\circ })_{\kappa ,R}$ & $R$ is a $\kappa $--Baer ring;\\ 
$(1^{\circ })_{<\infty ,R}$ & ${\cal A}nn_{<\infty }(R)\subseteq \{eR\ |\ e\in E(R)\}$;\\ 
$(1^{\circ })_R$ & $R$ is a strongly Baer ring;\end{tabular}

\begin{tabular}{ll}
$(2^{\circ })_{\kappa ,R}$ & $R$ is a Baer ring and $E(R)$ is a $\kappa $--complete Boolean algebra;\\ 
$(2^{\circ })_{<\infty ,R}$ & $R$ is a Baer ring and $E(R)$ is a Boolean algebra;\\ 
$(2^{\circ })_R$ & $R$ is a Baer ring and $E(R)$ is a complete Boolean algebra;\end{tabular}

\begin{tabular}{ll}
$(3^{\circ })_{\kappa ,R}$ & ${\rm 2Ann}(R)$ is a $\kappa $--complete Boolean sublattice of ${\rm Id}(R)$ such that\\ 
& $I\mapsto {\rm Ann}_R({\rm Ann}_R(I))$ is a lattice morphism from ${\rm Id}(R)$ to ${\rm 2Ann}(R)$;\\ 
$(3^{\circ })_{<\infty ,R}$ & ${\rm 2Ann}(R)$ is a Boolean sublattice of ${\rm Id}(R)$ such that\\ 
& $I\mapsto {\rm Ann}_R({\rm Ann}_R(I))$ is a lattice morphism from ${\rm Id}(R)$ to ${\rm 2Ann}(R)$;\\ 
$(3^{\circ })_R$ & ${\rm 2Ann}(R)$ is a complete Boolean sublattice of ${\rm Id}(R)$ such that\\ 
& $I\mapsto {\rm Ann}_R({\rm Ann}_R(I))$ is a lattice morphism from ${\rm Id}(R)$ to ${\rm 2Ann}(R)$;\end{tabular}

\begin{tabular}{ll}
$(4^{\circ })_{\kappa ,R}$ & for all $I,J\in {\rm Id}(R)$, ${\rm Ann}_R(I\cap J)={\rm Ann}_R(I)\vee {\rm Ann}_R(J)$, and\\ 
& ${\rm 2Ann}_{\kappa }(R)\subseteq {\cal A}nn_{<\infty }(R)$;\\ 
$(4^{\circ })_{<\infty ,R}$ & for all $I,J\in {\rm Id}(R)$, ${\rm Ann}_R(I\cap J)={\rm Ann}_R(I)\vee {\rm Ann}_R(J)$, and\\ 
& ${\rm 2Ann}_{<\infty }(R)\subseteq {\cal A}nn_{<\infty }(R)$;\\ 
$(4^{\circ })_R$ & for all $I,J\in {\rm Id}(R)$, ${\rm Ann}_R(I\cap J)={\rm Ann}_R(I)\vee {\rm Ann}_R(J)$, and\\ 
& ${\rm 2Ann}(R)\subseteq {\cal A}nn_{<\infty }(R)$;\\ 
$(iv^{\circ })_R$ & for all $I,J\in {\rm Id}(R)$, ${\rm Ann}_R(I\cap J)={\rm Ann}_R(I)\vee {\rm Ann}_R(J)$;\end{tabular}

\begin{tabular}{ll}
$(5^{\circ })_{\kappa ,R}$ & for each $U\subseteq R$ with $|U|\leq \kappa $, ${\rm Ann}_R(U)\vee {\rm Ann}_R({\rm Ann}_R(U))=R$;\\ 
$(5^{\circ })_{<\infty ,R}$ & for each finite $U\subseteq R$, ${\rm Ann}_R(U)\vee {\rm Ann}_R({\rm Ann}_R(U))=R$;\\ 
$(5^{\circ })_R$ & for each $U\subseteq R$, ${\rm Ann}_R(U)\vee {\rm Ann}_R({\rm Ann}_R(U))=R$.\end{tabular}\end{flushleft}

Obviously, conditions $(iv^{\circ })_R$, $(h)_{\kappa ,R}$, $(i)_{<\infty ,R}$ and $(j)_R$ satisfy the properties stated after Remark \ref{frameannpid} for conditions $(iv^{\circ })_L$, $(h)_{\kappa ,L}$, $(i)_{<\infty ,L}$ and $(j)_L$, where $h,i,j\in \overline{1,5}$.

\begin{remark}If $n\in \N ^*$, $u_1,\ldots ,u_n\in R$ and, for each $i\in \overline{1,n}$, ${\rm Ann}_R(u_i)=e_iR$ for some $e_i\in E(R)$, then $\displaystyle {\rm Ann}_R(\{u_1,\ldots ,\displaystyle u_n\})=\bigcap _{i=1}^n{\rm Ann}_R(u_i)=\bigcap _{i=1}^ne_iR=e_1R\cdot \ldots \cdot e_nR=e_1\ldots e_nR$, with $e_1\ldots e_n=e_1\wedge \ldots \wedge e_n\in E(R)$, hence $(1^{\circ })_{1,R}$ implies $(1^{\circ })_{<\infty ,R}$. Therefore $(1^{\circ })_{1,R}$ is equivalent to $(1^{\circ })_{<\infty ,R}$, that is $R$ is a Baer ring iff $R$ satisfies $(1^{\circ })_{<\infty ,R}$.

Hence, if ${\cal A}nn(R)={\cal A}nn_{<\infty }(R)$, so that $(1^{\circ })_{<\infty ,R}$ is equivalent to $(1^{\circ })_R$, then $(1^{\circ })_{1,R}$ is equivalent to $(1^{\circ })_R$, that is $R$ is a Baer ring iff $R$ is a strongly Baer ring.\label{finitelybaer}\end{remark}

\begin{remark} Proposition \ref{boolcenter} and the fact that the lattices ${\rm Con}(R)$ and ${\rm Id}(R)$ are isomorphic ensure us that ${\cal B}({\rm Id}(R))$ is a Boolean sublattice of ${\rm Id}(R)$.

If $R$ is semiprime, then, by {\rm \cite[Lemma, p. 1863]{bell}}, the map $e\mapsto eR/\!\!\sim _R$ from $E(R)$ to ${\cal B}(R^*)$ is a Boolean isomorphism, so, by Proposition \ref{boolmorph}, (\ref{boolmorph46}), it follows that the map $e\mapsto eR$ from $E(R)$ to ${\cal B}({\rm Id}(R))$ is a Boolean isomorphism.\label{boolrstar}\end{remark}

\begin{lemma} If $R$ is a semiprime commutative unitary ring, then:\begin{enumerate}
\item\label{bidrann1} ${\cal B}({\rm Id}(R))=\{eR\ |\ e\in E(R)\}$ and the map $e\mapsto eR$ from $E(R)$ to ${\cal B}({\rm Id}(R))$ is a Boolean isomorphism;
\item\label{bidrann2} if $U\subseteq R$, then $U\cap {\rm Ann}_R(U)\subseteq \{0\}$; if $I\in {\rm Id}(R)$, then $I\cap {\rm Ann}_R(I)=\{0\}$;
\item\label{bidrann4} if $U\subseteq R$ such that ${\rm Ann}_R(U)\vee {\rm Ann}_R({\rm Ann}_R(U))=R$, then ${\rm Ann}_R(U)=eR$ for some $e\in E(R)$.\end{enumerate}\label{bidrann}\end{lemma}

\begin{proof} (\ref{bidrann1}) By Remark \ref{boolrstar}.

\noindent (\ref{bidrann2}) If $U\subseteq R$ and $x\in U\cap {\rm Ann}_R(U)$, then $x\cdot x=0$, so that $x=0$ since a semiprime commutative unitary ring has no nonzero nilpotents \cite[p.125,126]{kist}. Now, if $I\in {\rm Id}(R)$, then $0\in I\cap {\rm Ann}_R(I)$.

\noindent (\ref{bidrann4}) By (\ref{bidrann2}), it follows that ${\rm Ann}_R(U)\in {\cal B}({\rm Id}(R))$, having ${\rm Ann}_R({\rm Ann}_R(U))$ as a complement, so that ${\rm Ann}_R(U)=eR$ for some $e\in E(R)$ by (\ref{bidrann1}).\end{proof}

\begin{lemma} If $R$ is a commutative unitary ring, then:\begin{itemize}
\item for any $U\subseteq R$, ${\rm Ann}_R(U)={\rm Ann}_R(\langle U\rangle _R)$;
\item if all ideals of $R$ are finitely generated, then ${\rm 2Ann}(R)={\rm 2Ann}_{<\infty }(R)\subseteq {\cal A}nn(R)={\cal A}nn_{<\infty }(R)$;
\item for any $I\in {\rm Id}(R)$, ${\rm Ann}_{{\rm Id}(R)}(I)=({\rm Ann}_R(I)]_{{\rm Id}(R)}$ and ${\rm Ann}_{{\rm Id}(R)}({\rm Ann}_{{\rm Id}(R)}(I))=({\rm Ann}_R({\rm Ann}_R(I))]_{{\rm Id}(R)}$.
\end{itemize}\label{annr}\end{lemma}

\begin{proof} Let $U\subseteq R$, arbitrary. Since $U\subseteq \langle U\rangle _R$, we have ${\rm Ann}_R(\langle U\rangle _R)\subseteq {\rm Ann}_R(U)$. The converse inclusion holds, as well, since, given any $a\in \langle U\rangle _R$ and any $x\in {\rm Ann}_R(U)$, we have $a=a_1\cdot u_1+\ldots +a_n\cdot u_n$ for some $n\in \N ^*$, $a_1,\ldots ,a_n\in R$ and $u_1,\ldots ,u_n\in U$, so that $x\cdot u_1=\ldots =x\cdot u_n=0$, therefore $x\cdot a=0$, so $x\in {\rm Ann}_R(\langle U\rangle _R)$.

Thus, in the particular case when all ideals of $R$ are finitely generated, so that there exists a finite $F\subseteq R$ such that $\langle U\rangle _R=\langle F\rangle _R$, then ${\rm Ann}_R(U)={\rm Ann}_R(\langle U\rangle _R)={\rm Ann}_R(\langle F\rangle _R)={\rm Ann}_R(F)$, hence ${\cal A}nn(R)={\cal A}nn_{<\infty }(R)$.

Let $J\in {\rm Id}(R)$. Then: $J\in ({\rm Ann}_R(I)]_{{\rm Id}(R)}$ iff $J\subseteq {\rm Ann}_R(I)$ iff $x\in {\rm Ann}_R(I)$ for all $x\in J$ iff $x\cdot y=0$ for all $x\in J$ and all $y\in I$ iff $J\cdot I=\{0\}$ iff $J\in {\rm Ann}_{{\rm Id}(R)}(I)$. Hence ${\rm Ann}_{{\rm Id}(R)}(I)=({\rm Ann}_R(I)]_{{\rm Id}(R)}$, therefore ${\rm Ann}_{{\rm Id}(R)}({\rm Ann}_{{\rm Id}(R)}(I))={\rm Ann}_{{\rm Id}(R)}(({\rm Ann}_R(I)]_{{\rm Id}(R)})={\rm Ann}_{{\rm Id}(R)}({\rm Ann}_R(I))=({\rm Ann}_R({\rm Ann}_R(I))]_{{\rm Id}(R)}$.\end{proof}

\begin{lemma}If $R$ is a semiprime commutative unitary ring, then, for any $U\subseteq R$, there exists a finite subset $S\subseteq \langle U\rangle _R$ such that ${\rm Ann}_R(U)={\rm Ann}_R(S)$, so ${\rm 2Ann}(R)={\rm 2Ann}_{<\infty }(R)\subseteq {\cal A}nn(R)={\cal A}nn_{<\infty }(R)$.\label{annfin}\end{lemma}

\begin{proof} By Remark \ref{hereswhy} and Lemmas \ref{annr} and \ref{anntheta}, for an appropriate finite subset $S\subseteq \langle U\rangle _R$, we have $\langle U\rangle _R/_{\textstyle \sim _R}=\langle S\rangle _R/_{\textstyle \sim _R}$, thus ${\rm Ann}_{{\rm Id}(R)}(\langle U\rangle _R)/\!\!\sim _R={\rm Ann}_{R^*}(\langle U\rangle _R/\!\!\sim _R)={\rm Ann}_{R^*}(\langle S\rangle _R/_{\textstyle \sim _R})={\rm Ann}_{{\rm Id}(R)}(\langle S\rangle _R)/\!\!\sim _R$, hence $({\rm Ann}_R(U)]_{{\rm Id}(R)}=({\rm Ann}_R(\langle U\rangle _R)]_{{\rm Id}(R)}={\rm Ann}_{{\rm Id}(R)}(\langle U\rangle _R)={\rm Ann}_{{\rm Id}(R)}(\langle S\rangle _R)=({\rm Ann}_R(\langle S\rangle _R)]_{{\rm Id}(R)}=({\rm Ann}_R(S)]_{{\rm Id}(R)}$, thus\linebreak ${\rm Ann}_R(U)={\rm Ann}_R(S)$.\end{proof}

\begin{proposition} Let $R$ be a commutative unitary ring.\begin{enumerate}
\item\label{1r1} If all ideals of $R$ are finitely generated, then $R$ is a Baer ring iff $R$ is a strongly Baer ring.
\item\label{1r2} If $R$ is semiprime, then: $R$ is a Baer ring iff $R$ is a strongly Baer ring iff ${\rm Id}(R)$ is a Stone lattice iff ${\rm Id}(R)$ is a strongly Stone lattice.\end{enumerate}\label{1r}\end{proposition}

\begin{proof} (\ref{1r1}) By Remark \ref{finitelybaer} and Lemma \ref{annr}.

\noindent (\ref{1r2}) By Remark \ref{finitelybaer} and Lemma \ref{annfin}, $R$ is Baer iff $R$ is strongly Baer.

For any $U\subseteq R$, we have $({\rm Ann}_R(U)]_{{\rm Id}(R)}=({\rm Ann}_R(\langle U\rangle _R)]_{{\rm Id}(R)}={\rm Ann}_{{\rm Id}(R)}(\langle U\rangle _R)$ by Lemma \ref{annr}, so that, for any $e\in R$, ${\rm Ann}_R(U)=eR$ iff ${\rm Ann}_{{\rm Id}(R)}(\langle U\rangle _R)=(eR]_{{\rm Id}(R)}$. According to Lemma \ref{bidrann}, (\ref{bidrann1}), $e\in E(R)$ iff $eR\in {\cal B}({\rm Id}(R))$. Hence $(1)_{1,{\rm Id}(R)}$ is equivalent to $(1^{\circ })_R$, that is ${\rm Id}(R)$ is a Stone lattice iff $R$ is a strongly Baer ring.

Finally, by Corollary \ref{idrdavey}, ${\rm Id}(R)$ is a Stone lattice iff ${\rm Id}(R)$ is a strongly Stone lattice.\end{proof}

See also \cite[Theorem $8$]{bell} and \cite[Theorem $2.6$]{sim}, according to which, if $R$ is semiprime, then $R$ is a Baer ring iff $R^*$ is a Stone lattice, which, by Corollary \ref{cglatdavey} and the fact that the lattices ${\rm Con}(R)$ and ${\rm Id}(R)$ are isomorphic, is equivalent to ${\rm Id}(R)$ being a Stone lattice.

\begin{proposition}For any semiprime commutative unitary ring  $R$ and any cardinality $\kappa $, conditions $(2^{\circ })_{\kappa ,R}$ and $(2)_{\kappa ,{\rm Id}(R)}$ are equivalent.\label{2r}\end{proposition}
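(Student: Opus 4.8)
The plan is to split each of the two conditions into its two conjuncts and match them one at a time, the bridge in both cases being the map $e\mapsto eR$ from $E(R)$ to ${\cal B}({\rm Id}(R))$.

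First I would settle the ``Baer/Stone'' halves, namely that for semiprime $R$ the ring $R$ is Baer if and only if the lattice ${\rm Id}(R)$ is Stone. Since ``$R$ is Baer'' is precisely condition $(1^{\circ})_{1,R}$ and ``${\rm Id}(R)$ is Stone'' is precisely condition $(1)_{1,{\rm Id}(R)}$, this is the instance $\kappa=\mu=1$ of Proposition \ref{1r}. (A self-contained argument is also available: if ${\rm Id}(R)$ is Stone, apply the Stone property to a principal ideal $aR$ and use ${\rm Ann}_{{\rm Id}(R)}(aR)=({\rm Ann}_R(a)]_{{\rm Id}(R)}$ from Lemma \ref{annr} together with the identification ${\cal B}({\rm Id}(R))=\{eR\ |\ e\in E(R)\}$ from Remark \ref{boolrstar} to get ${\rm Ann}_R(a)=eR$; conversely, if $R$ is Baer and $I\in{\rm Id}(R)$, use Lemma \ref{annfin} to find a finite $S$ with ${\rm Ann}_R(I)={\rm Ann}_R(S)=\bigcap_{s\in S}e_sR=(\prod_{s\in S}e_s)R$, a principal ideal generated by an idempotent, and read the conclusion back into ${\rm Id}(R)$ via Lemma \ref{annr} and Remark \ref{boolrstar}.)

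Next I would match the ``Boolean-center'' halves. By Remark \ref{boolrstar} --- this is where semiprimeness is used --- ${\cal B}({\rm Id}(R))$ is always a Boolean sublattice of ${\rm Id}(R)$, and $e\mapsto eR$ is a Boolean \emph{isomorphism} $E(R)\to{\cal B}({\rm Id}(R))$. Hence the ``Boolean sublattice'' clause of $(2)_{\kappa,{\rm Id}(R)}$ is automatic, and since being a $\kappa$-complete Boolean algebra is an intrinsic property of a Boolean algebra --- a Boolean isomorphism is in particular an order isomorphism, so it carries and reflects suprema of subsets of cardinality at most $\kappa$ --- $E(R)$ is a $\kappa$-complete Boolean algebra if and only if ${\cal B}({\rm Id}(R))$ is. Putting the two matchings together gives that $(2^{\circ})_{\kappa,R}$ and $(2)_{\kappa,{\rm Id}(R)}$ are equivalent for every cardinality $\kappa$; for finite $\kappa$ the $\kappa$-completeness clauses are vacuous and the equivalence collapses to ``$R$ Baer iff ${\rm Id}(R)$ Stone''.

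I do not expect a genuine obstacle: once Remark \ref{boolrstar} and Proposition \ref{1r} are available the proof is bookkeeping. The one point that needs care --- the subtle step rather than an obstacle --- is the reading of ``${\cal B}(L)$ is a $\kappa$-complete Boolean sublattice of $L$'': it must mean ``a Boolean sublattice of $L$ which is, as an abstract Boolean algebra, $\kappa$-complete'', not ``a Boolean sublattice closed under $\kappa$-ary joins formed inside $L$'', so that $\kappa$-completeness can legitimately be transported along $e\mapsto eR$ even though ${\cal B}({\rm Id}(R))$ need not be closed under infinite joins computed in ${\rm Id}(R)$. This is the reading forced by Corollary \ref{idrdavey}, since for semiprime $R$ the lattice ${\rm Id}(R)$ is strongly Stone, whence $(2)_{\kappa,{\rm Id}(R)}$ is in fact independent of the nonzero cardinality $\kappa$.
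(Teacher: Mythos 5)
Your proof is correct and follows essentially the same route as the paper: the paper's own argument is exactly ``by Remark \ref{boolrstar} and Proposition \ref{1r}'', i.e.\ the Baer/Stone halves are matched via the case $(1^{\circ})_{1,R}\Leftrightarrow(1)_{1,{\rm Id}(R)}$ of Proposition \ref{1r}, and the Boolean-center halves via the Boolean isomorphism $e\mapsto eR:E(R)\to{\cal B}({\rm Id}(R))$ of Remark \ref{boolrstar}. Your extra remarks (the self-contained Baer/Stone argument and the discussion of how to read ``$\kappa$-complete Boolean sublattice'') only make explicit what the paper's one-line proof leaves implicit.
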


\begin{proof} By Lemma \ref{bidrann}, (\ref{bidrann1}), and Proposition \ref{1r}, (\ref{1r2}).\end{proof}

For the next lemma, recall that ${\rm 2Ann}(R)\subseteq {\cal A}nn(R)\subseteq {\rm Id}(R)$,\linebreak ${\rm PAnn}({\rm Id}(R))\subseteq {\rm Id}({\rm Id}(R))$ and ${\rm P2Ann}({\rm Id}(R))\subseteq {\rm Id}({\rm Id}(R))$, and we will be referring to these sets of annihilators as subposets of ${\rm Id}(R)$, respectively ${\rm Id}({\rm Id}(R))$.

\begin{lemma} For any commutative unitary ring $R$, the map $x\mapsto (x]_{{\rm Id}(R)}$ from ${\cal A}nn(R)$ to ${\rm PAnn}({\rm Id}(R))$, as well as from ${\rm 2Ann}(R)$ to ${\rm P2Ann}({\rm Id}(R))$, is an order isomorphism.\label{annrannidr}\end{lemma}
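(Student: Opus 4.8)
The plan is to reduce the statement to Lemma~\ref{annr} together with the elementary observation that $I\mapsto (I]_{{\rm Id}(R)}$ is an order embedding of the poset ${\rm Id}(R)$ into ${\rm Id}({\rm Id}(R))$. First I would record this ambient fact: for $I,J\in{\rm Id}(R)$ one has $I\subseteq J$ iff $(I]_{{\rm Id}(R)}\subseteq (J]_{{\rm Id}(R)}$, since $I=\max\bigl((I]_{{\rm Id}(R)}\bigr)$ recovers $I$ from its principal ideal; in particular this map is injective and both order-preserving and order-reflecting. Consequently its restriction to any subposet of ${\rm Id}(R)$ is automatically an order isomorphism onto its image, and since ${\cal A}nn(R)\subseteq{\rm Id}(R)$ and ${\rm 2Ann}(R)\subseteq{\rm Id}(R)$ (noted in the text just before the lemma), the only thing that remains to be checked is that the images of ${\cal A}nn(R)$ and of ${\rm 2Ann}(R)$ under this map are precisely ${\rm PAnn}({\rm Id}(R))$ and ${\rm P2Ann}({\rm Id}(R))$.

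For the first map I would argue as follows. Given $P\in{\cal A}nn(R)$, write $P={\rm Ann}_R(U)$ for some $U\subseteq R$; by the first item of Lemma~\ref{annr} we have $P={\rm Ann}_R(\langle U\rangle_R)$, and then by the second item $(P]_{{\rm Id}(R)}=({\rm Ann}_R(\langle U\rangle_R)]_{{\rm Id}(R)}={\rm Ann}_{{\rm Id}(R)}(\langle U\rangle_R)\in{\rm PAnn}({\rm Id}(R))$, so the map is well defined into ${\rm PAnn}({\rm Id}(R))$. Conversely, an arbitrary element of ${\rm PAnn}({\rm Id}(R))$ is ${\rm Ann}_{{\rm Id}(R)}(I)$ for some $I\in{\rm Id}(R)$, and by Lemma~\ref{annr} this equals $({\rm Ann}_R(I)]_{{\rm Id}(R)}$ with ${\rm Ann}_R(I)\in{\cal A}nn(R)$ (taking the subset $I$ itself), so the map is onto. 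Hence $x\mapsto (x]_{{\rm Id}(R)}$ restricts to a bijection ${\cal A}nn(R)\to{\rm PAnn}({\rm Id}(R))$, which by the ambient embedding is an order isomorphism.

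For the second map I would run the same argument, setting $I=\langle U\rangle_R$: since ${\rm Ann}_R(U)={\rm Ann}_R(I)$ we get ${\rm Ann}_R({\rm Ann}_R(U))={\rm Ann}_R({\rm Ann}_R(I))$, and because ${\rm Ann}_R(I)$ is itself an ideal the second identity of Lemma~\ref{annr} gives $({\rm Ann}_R({\rm Ann}_R(U))]_{{\rm Id}(R)}={\rm Ann}_{{\rm Id}(R)}({\rm Ann}_{{\rm Id}(R)}(I))\in{\rm P2Ann}({\rm Id}(R))$; surjectivity onto ${\rm P2Ann}({\rm Id}(R))$ again follows by taking the subset $I$. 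Combining with the ambient-embedding remark yields the second order isomorphism. There is no genuine obstacle here; the only point requiring care is the bookkeeping — replacing, before each application of Lemma~\ref{annr}, the subset of $R$ whose annihilator is taken by the ideal it generates, and keeping straight which annihilators are computed in $R$ versus in the lattice ${\rm Id}(R)$.
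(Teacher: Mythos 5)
Your proposal is correct and follows essentially the same route as the paper: both proofs rest on Lemma \ref{annr}, using its first item to replace ${\rm Ann}_R(U)$ by ${\rm Ann}_R(\langle U\rangle _R)$ and its second item to identify the images with ${\rm PAnn}({\rm Id}(R))$ and ${\rm P2Ann}({\rm Id}(R))$, with injectivity and order-reflection being immediate (you make this explicit via $I=\max \left((I]_{{\rm Id}(R)}\right)$, which the paper leaves as "clearly").
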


\begin{proof} By Lemma \ref{annr}, ${\cal A}nn(R)=\{{\rm Ann}_R(I)\ |\ I\in {\rm Id}(R)\}$ and ${\rm 2Ann}(R)=\{{\rm Ann}_R(\linebreak {\rm Ann}_R(I))\ |\ I\in {\rm Id}(R)\}$, hence these maps are completely defined. By the same lemma, these maps are well defined and surjective. Clearly, they are injective, thus bijective, and both these maps and their inverses are order--preserving.\end{proof}

\begin{proposition} For any commutative unitary ring $R$ and any cardinality $\kappa $, the properties $(3^{\circ })_{\kappa ,R}$ and $(3)_{\kappa ,{\rm Id}(R)}$ are equivalent.\label{3r}\end{proposition}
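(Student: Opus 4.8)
The plan is to transfer condition $(3^{\circ})_{\kappa,R}$ to $(3)_{\kappa,{\rm Id}(R)}$ along the order isomorphism of Lemma~\ref{annrannidr}, in the spirit of Proposition~\ref{3theta}. Write $\psi\colon {\rm Id}(R)\to {\rm Id}({\rm Id}(R))$ for the map $I\mapsto (I]_{{\rm Id}(R)}$. This is a bounded lattice embedding onto the bounded sublattice ${\rm PId}({\rm Id}(R))$ of ${\rm Id}({\rm Id}(R))$, so it preserves finite joins, arbitrary meets and the bounds; by Lemma~\ref{annr} it carries ${\rm Ann}_R({\rm Ann}_R(I))$ to ${\rm Ann}_{{\rm Id}(R)}({\rm Ann}_{{\rm Id}(R)}(I))$ for every $I\in {\rm Id}(R)$, and, together with the first part of Lemma~\ref{annr}, this yields $\psi({\rm 2Ann}(R))={\rm P2Ann}({\rm Id}(R))$ and, by injectivity, $\psi^{-1}({\rm P2Ann}({\rm Id}(R)))={\rm 2Ann}(R)$; thus $\psi$ restricts to the order isomorphism ${\rm 2Ann}(R)\to {\rm P2Ann}({\rm Id}(R))$ of Lemma~\ref{annrannidr}.

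First I would dispose of the ``easy'' clauses. Since $\psi$ preserves finite joins and meets and satisfies $\psi^{-1}({\rm P2Ann}({\rm Id}(R)))={\rm 2Ann}(R)$, the set ${\rm 2Ann}(R)$ is a sublattice of ${\rm Id}(R)$ iff ${\rm P2Ann}({\rm Id}(R))$ is a sublattice of ${\rm Id}({\rm Id}(R))$; when this holds, $\psi\mid_{{\rm 2Ann}(R)}$ is a bounded lattice isomorphism onto ${\rm P2Ann}({\rm Id}(R))$, hence one side is a Boolean sublattice iff the other is. Because $\psi$ preserves arbitrary meets and, in ${\rm Id}({\rm Id}(R))$, $\bigcap_i(P_i]_{{\rm Id}(R)}=(\bigcap_iP_i]_{{\rm Id}(R)}$, closure under meets of families of cardinality at most $\kappa$ also transfers in both directions. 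Finally, assuming the sublattice and Boolean clauses, the lattice-morphism clause transfers because, by Lemma~\ref{annr}, the map $I\mapsto {\rm Ann}_{{\rm Id}(R)}({\rm Ann}_{{\rm Id}(R)}(I))$ is $\psi\mid_{{\rm 2Ann}(R)}$ composed with $I\mapsto {\rm Ann}_R({\rm Ann}_R(I))$, and $\psi\mid_{{\rm 2Ann}(R)}$ is a lattice isomorphism.

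The main obstacle is the join side of $\kappa$-completeness when $\kappa$ is infinite, because $\psi$ need not preserve infinite joins: in ${\rm Id}({\rm Id}(R))$ one has $\bigvee_i(P_i]_{{\rm Id}(R)}=\{J\in {\rm Id}(R)\mid J\subseteq P_{i_1}\vee\dots\vee P_{i_n}\text{ for some finite subfamily}\}$, which equals $(\bigvee_iP_i]_{{\rm Id}(R)}$ only when the join $\bigvee_iP_i$ in ${\rm Id}(R)$ is attained by a finite subfamily. The fact that makes everything cohere is that $R=\langle 1\rangle_R$ is a compact element of ${\rm Id}(R)$: if ${\rm 2Ann}(R)$ is a Boolean sublattice of ${\rm Id}(R)$ closed under joins of families of cardinality at most $\kappa$, and $(P_i)_{i\in I}\subseteq {\rm 2Ann}(R)$ with $|I|\le\kappa$, then $b:=\bigvee_iP_i\in {\rm 2Ann}(R)$; with $\neg\, b$ its complement in the Boolean algebra ${\rm 2Ann}(R)$ we have $b\vee\neg\, b=R$, so $1$ lies in a finite partial sum $P_{i_1}+\dots+P_{i_n}+\neg\, b$, whence $R=(P_{i_1}\vee\dots\vee P_{i_n})\vee\neg\, b$ while also $(P_{i_1}\vee\dots\vee P_{i_n})\cap\neg\, b\subseteq b\cap\neg\, b=\{0\}$, so by uniqueness of complements $b=P_{i_1}\vee\dots\vee P_{i_n}$, giving $\psi(b)=\bigvee_i(P_i]_{{\rm Id}(R)}\in {\rm P2Ann}({\rm Id}(R))$. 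Conversely, if ${\rm P2Ann}({\rm Id}(R))$ is closed under joins of families of cardinality at most $\kappa$, then for such a family $\bigvee_i(P_i]_{{\rm Id}(R)}=(Q]_{{\rm Id}(R)}$ for some $Q\in {\rm 2Ann}(R)$, and the description of this join forces $Q=\bigvee_iP_i$, so ${\rm 2Ann}(R)$ is closed under such joins. Assembling the four clauses — closure under $\kappa$-ary meets, closure under $\kappa$-ary joins, being a Boolean sublattice, and the double-annihilator map being a lattice morphism — yields the equivalence of $(3^{\circ})_{\kappa,R}$ and $(3)_{\kappa,{\rm Id}(R)}$.
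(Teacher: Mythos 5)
Your proposal follows essentially the same route as the paper: the paper's proof of this proposition is exactly the combination of Lemma \ref{annrannidr} (the order isomorphism $P\mapsto (P]_{{\rm Id}(R)}$ between ${\rm 2Ann}(R)$ and ${\rm P2Ann}({\rm Id}(R))$) with the observation, coming from Lemma \ref{annr}, that composing $I\mapsto {\rm Ann}_R({\rm Ann}_R(I))$ with this map yields $I\mapsto {\rm Ann}_{{\rm Id}(R)}({\rm Ann}_{{\rm Id}(R)}(I))$ --- which is precisely the content of your first two paragraphs. What you add beyond the paper is the explicit treatment of closure under $\kappa$-ary joins computed in the ambient lattices, where the principal-ideal embedding does not preserve infinite joins; the paper's terse proof leaves this point to the order isomorphism, whereas you settle it via the compactness of $R=\langle 1\rangle _R$ in ${\rm Id}(R)$, showing the relevant joins are attained by finite subfamilies --- a genuinely useful extra precision. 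One small repair is needed there: you appeal to ``uniqueness of complements'' in ${\rm Id}(R)$, but ${\rm Id}(R)$ is in general only modular, not distributive, so complements need not be unique. However, $P_{i_1}\vee \dots \vee P_{i_n}\leq b$ and both complement $\neg \, b$, and comparable complements of the same element of a modular lattice coincide; directly, $b=b\cap \bigl((P_{i_1}\vee \dots \vee P_{i_n})\vee \neg \, b\bigr)=(P_{i_1}\vee \dots \vee P_{i_n})\vee (b\cap \neg \, b)=P_{i_1}\vee \dots \vee P_{i_n}$ by modularity, so your conclusion stands and the rest of the argument (including the converse direction for joins and the meet and morphism clauses) is correct.
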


\begin{proof} By Lemma \ref{annrannidr} and the fact that, by Lemma \ref{annr}, the map $x\mapsto (x]_{{\rm Id}(R)}$ from ${\rm 2Ann}(R)\subseteq {\rm Id}(R)$ to ${\rm P2Ann}({\rm Id}(R))$ composed with the map $I\mapsto {\rm Ann}_R({\rm Ann}_R(I))$ from ${\rm Id}(R)$ to ${\rm 2Ann}(R)$ equals the map $I\mapsto {\rm Ann}_{{\rm Id}(R)}({\rm Ann}_{{\rm Id}(R)}(I))$ from ${\rm Id}(R)$ to ${\rm P2Ann}({\rm Id}(R))\subseteq {\rm Id}({\rm Id}(R))$.\end{proof}

\begin{proposition} Let $R$ be a commutative unitary ring.  Then:\begin{enumerate}
\item\label{ivr} $(iv^{\circ })_R$ is equivalent to $(iv)_{{\rm Id}(R)}$;
\item\label{4r1} if $R$ has all ideals finitely generated, then conditions $(iv)_{{\rm Id}(R)}$, $(iv^{\circ })_R$ and $(4^{\circ })_R$ are equivalent;
\item\label{4r0} if $R$ is semiprime, then conditions $(iv)_{{\rm Id}(R)}$, $(4)_{{\rm Id}(R)}$, $(iv^{\circ })_R$ and $(4^{\circ })_R$ are equivalent.\end{enumerate}\label{4r}\end{proposition}

\begin{proof} (\ref{ivr}) Let $I,J\in {\rm Id}(R)$. By Lemma \ref{annr}, ${\rm Ann}_{{\rm Id}(R)}(I\cap J)=({\rm Ann}_R(I\cap J)]_{{\rm Id}(R)}$ and ${\rm Ann}_{{\rm Id}(R)}(I)\vee {\rm Ann}_{{\rm Id}(R)}(J)=({\rm Ann}_R(I)]_{{\rm Id}(R)}\vee ({\rm Ann}_R(J)]_{{\rm Id}(R)}=({\rm Ann}_R(I)\vee {\rm Ann}_R(J)]_{{\rm Id}(R)}$, hence: ${\rm Ann}_R(I\cap J)={\rm Ann}_R(I)\vee {\rm Ann}_R(J)$ iff ${\rm Ann}_{{\rm Id}(R)}(I\cap J)={\rm Ann}_{{\rm Id}(R)}(I)\vee {\rm Ann}_{{\rm Id}(R)}(J)$.

\noindent (\ref{4r1}),(\ref{4r0}) By Lemmas \ref{annr} and \ref{annfin}, if $R$ has all ideals principal or it is semiprime, then the second part of condition $(4^{\circ })_R$ is trivially satisfied, so that $(4^{\circ })_R$ is equivalent to $(iv^{\circ })_R$.

By (\ref{ivr}), $(iv^{\circ })_R$ is equivalent to $(iv)_{{\rm Id}(R)}$.

Finally, by Corollary \ref{idrdavey}, if $R$ is semiprime, then $(iv)_{{\rm Id}(R)}$ is equivalent to $(4)_{{\rm Id}(R)}$.\end{proof}

\begin{proposition} Let $R$ be a commutative unitary ring. Then:\begin{itemize}
\item $(5)_{1,{\rm Id}(R)}$ is equivalent to $(5^{\circ })_R$;
\item if all ideals of $R$ are finitely generated, then $(5)_{1,{\rm Id}(R)}$, $(5^{\circ })_R$ and $(5^{\circ })_{<\infty ,R}$ are equivalent;
\item if $R$ is semiprime, then $(5)_{1,{\rm Id}(R)}$, $(5)_{{\rm Id}(R)}$, $(5^{\circ })_R$ and $(5^{\circ })_{<\infty ,R}$ are equivalent.\end{itemize}\label{5r}\end{proposition}

\begin{proof} By Lemma \ref{annr}, for any $U\subseteq R$, we have ${\rm Ann}_{{\rm Id}(R)}(\langle U\rangle _R)\vee {\rm Ann}_{{\rm Id}(R)}({\rm Ann}_{{\rm Id}(R)}(\linebreak \langle U\rangle _R))=({\rm Ann}_R(\langle U\rangle _R)]_{{\rm Id}(R)}\vee ({\rm Ann}_R({\rm Ann}_R(\langle U\rangle _R))]_{{\rm Id}(R)}=({\rm Ann}_R(U)]_{{\rm Id}(R)}\vee ({\rm Ann}_R(\linebreak {\rm Ann}_R(U))]_{{\rm Id}(R)}=({\rm Ann}_R(U)\vee ({\rm Ann}_R({\rm Ann}_R(U))]_{{\rm Id}(R)}\in {\rm PId}({\rm Id}(R))$ since ${\rm Ann}_R(U)\vee ({\rm Ann}_R({\rm Ann}_R(U)\in {\rm Id}(R)$, hence ${\rm Ann}_{{\rm Id}(R)}(\langle U\rangle _R)\vee {\rm Ann}_{{\rm Id}(R)}({\rm Ann}_{{\rm Id}(R)}(\langle U\rangle _R))={\rm Id}(R)=(R]_{{\rm Id}(R)}$ iff ${\rm Ann}_R(U)\vee ({\rm Ann}_R({\rm Ann}_R(U))=R$. Therefore $(5)_{1,{\rm Id}(R)}$ is equivalent to $(5^{\circ })_R$.

Clearly, if ${\cal A}nn(R)={\cal A}nn_{<\infty }(R)$, in particular if $R$ has all ideals principal or $R$ is semiprime, then $(5^{\circ })_R$ is equivalent to $(5^{\circ })_{<\infty ,R}$.

By Corollary \ref{idrdavey}, if $R$ is semiprime, then $(5)_{1,{\rm Id}(R)}$ is equivalent to $(5)_{{\rm Id}(R)}$.\end{proof}

\begin{theorem} If $R$ is a semiprime commutative unitary ring, then, for any nonzero cardinality $\kappa $ and any $h,i,j\in \overline{1,5}$, $(iv^{\circ })_R$, $(h^{\circ })_{\kappa ,R}$, $(i^{\circ })_{<\infty ,R}$ and $(j^{\circ })_R$ are equivalent.\label{myringsdavey}\end{theorem}

\begin{proof} By Corollary \ref{idrdavey} and Propositions \ref{1r}, \ref{2r}, \ref{3r}, \ref{4r} and \ref{5r}.\end{proof}

\begin{remark} Let $S$ be a commutative unitary ring. Since the variety of commutative unitary rings is semi--degenerate and thus it has no skew congruences, it follows that ${\rm Id}(R\times S)={\rm Id}(R)\times {\rm Id}(S)$, hence, if $R$ and $S$ are semiprime, then, for any cardinality $\kappa $, the ring $R\times S$ is $\kappa $--Baer iff $R$ and $S$ are $\kappa $--Baer, according to Corollary \ref{prodcg}.\end{remark}

If we eliminate from Theorem \ref{myringsdavey} the trivial implications, along with those that immediately follow from Lemma \ref{annfin}, then we obtain the following:

\begin{corollary} If $R$ is a semiprime commutative unitary ring, then the following are equivalent:\begin{itemize}
\item $R$ is a Baer ring;
\item $R$ is a strongly Baer ring and $E(R)$ is a complete Boolean algebra;
\item ${\rm 2Ann}(R)$ is a Boolean sublattice of ${\rm Id}(R)$ such that $I\mapsto {\rm Ann}_R({\rm Ann}_R(I))$ is a lattice morphism from ${\rm Id}(R)$ to ${\rm 2Ann}(R)$;
\item ${\rm 2Ann}(R)$ is a complete Boolean sublattice of ${\rm Id}(R)$ such that $I\mapsto {\rm Ann}_R({\rm Ann}_R(I))$ is a lattice morphism from ${\rm Id}(R)$ to ${\rm 2Ann}(R)$;
\item for all $I,J\in {\rm Id}(R)$, ${\rm Ann}_R(I\cap J)={\rm Ann}_R(I)\vee {\rm Ann}_R(J)$;
\item for any $U\subseteq R$, ${\rm Ann}_R(U)\vee {\rm Ann}_R({\rm Ann}_R(U))=R$.\end{itemize}\end{corollary}

Propositions \ref{1r}, \ref{2r}, \ref{3r}, \ref{4r} and \ref{5r}, along with Theorem \ref{davey}, (\ref{davey0}), also give us:

\begin{corollary} Let $R$ be a commutative unitary ring and $m$ be a nonzero cardinality such that the intersection in ${\rm Id}(R)$ is distributive w.r.t. the joins of families of cardinality at most $m$. Then, for any nonzero cardinalities $\kappa \leq m$, $\lambda \leq m$ and $\mu \leq m$ and any infinite cardinality $\iota \leq m$:\begin{itemize}
\item conditions $(2^{\circ })_{\kappa ,R}$, $(3^{\circ })_{\lambda ,R}$ and $(5^{\circ })_R$ are equivalent;
\item if all ideals of $R$ are finitely generated, then conditions $(2^{\circ })_{\kappa ,R}$, $(3^{\circ })_{\lambda ,R}$, $(iv^{\circ })_R$, $(4^{\circ })_{\mu ,R}$, $(4^{\circ })_R$, $(5^{\circ })_{<\infty ,R}$, $(5^{\circ })_{\iota ,R}$ and $(5^{\circ })_R$ are equivalent.\end{itemize}\end{corollary}

\section{Conclusions}
\label{conclusions}

Determining what kinds of complete algebraic modular lattices are congruence lattices of semiprime algebras from semi--degenerate congruence--modular varieties may be of interest, since it will follow that the equivalences in Corollary \ref{transfercglat} hold for all those kinds of lattices. 

Another theme for future research is studying further extensions of Davey`s Theorem to different kinds of lattices, as well as finding more classes of algebras in which, given an appropriate setting (regarding definitions for annihilators and a Boolean center), Davey`s Theorem holds not only for congruences, but also for elements, as in the case of bounded distributive lattices, commutator lattices, residuated lattices and commutative unitary rings.

\section*{Acknowledgements}

This work was supported by the research grant ``Clonoids: a Unifying Approach to Equational Logic and Clones`` of the Austrian Science Fund FWF (P29931).

I thank Erhard Aichinger for very hepful discussions on the problems tackled in this paper. 

\end{document}